\newcommand{\ds}{\displaystyle}
\newcommand{\realstwo}{\mathbb{R}^2}
\newcommand{\realsthree}{\mathbb{R}^3}
\newcommand{\xb}{{\bf{x}}}
\newcommand{\Dx}{{\partial_x}}
\newcommand{\cA}{{\mathscr{A}}}
\newcommand{\Dn}{\partial_{\nu}}
\newcommand{\cE}{{\mathcal{E}}}
\newcommand{\cD}{\mathscr{D}}
\newcommand{\R}{\mathbb{R}}
\newcommand{\bH}{\mathbf{H}}
\newcommand{\Ez}{E_{z}}
\theoremstyle{plain}
\newtheorem{theorem}{Theorem}[section]
\newtheorem{lemma}[theorem]{Lemma}
\newtheorem{proposition}[theorem]{Proposition}
\newtheorem{condition}{Condition}
\newtheorem{corollary}[theorem]{Corollary}
\newtheorem{definition}{Definition}
\theoremstyle{remark}
\newtheorem{remark}{Remark}[section]
\numberwithin{equation}{section}
\numberwithin{theorem}{section}
\numberwithin{remark}{section}
\numberwithin{assumption}{section}
\numberwithin{condition}{section}
\begin{document}

\title{Large Deflections of A Structurally Damped Panel \\ in A Subsonic Flow}
 \author{\begin{tabular}[t]{c@{\extracolsep{1em}}c}
         Abhishek Balakrishna & Justin T. Webster  \\ 
 \it University of Maryland, Baltimore County ~~&~~ \it University of Maryland, Baltimore County \\ 
 \it Baltimore, MD &\it Baltimore, MD \\  
bala2@umbc.edu & websterj@umbc.edu
\end{tabular}}

\maketitle

\begin{abstract}
\noindent  The large deflections of panels in subsonic flow are considered. Specifically, a fully clamped von Karman plate accounting for both rotational inertia in plate filaments and structural damping of square root type is considered. The panel is taken to be embedded in the boundary of a linear, subsonic potential flow on the positive halfspace in $\mathbb R^3$. Solutions are constructed via a semigroup approach despite the lack of natural dissipativity associated to the generator of the linear dynamics. The flow-plate dynamics are then reduced---via an explicit Neumann-to-Dirichlet (downwash-to-pressure) solver for the flow---to a memory-type dynamical system for the plate. For the non-conservative plate dynamics, a global attractor is explicitly constructed via Lyapunov and quasi-stability methods. Finally, it is shown that via the compactness of the attractor and finiteness of the dissipation integral, that all trajectories converge strongly to the set of stationary states. 
\vskip.15cm

\noindent {\em Key Terms}: mathematical aeroelasticity, von Karman plate, flutter, semigroup, quasi-stability, attractors, stabilization
\vskip.15cm
\noindent {\em 2010 AMS}: 74F10, 74K20, 76G25, 35B40, 35G25, 37L15 \end{abstract}

\section{Introduction}
In this treatment we present and discuss rigorous results for a panel flutter model appearing in the classic aeroelasticity literature \cite{dowellnon,B}, namely, a subsonic, inviscid potential flow interacting with the large deflections of a fully clamped plate. We specifically take a von Karman-type plate, allowing for rotational inertia effects in plate filaments as well as some (small) amount of structural damping. We are concerned with the Hadamard well-posedness of solutions (existence, uniqueness, and continuous dependence upon data), as well as qualitative properties of solutions beyond the transient regime. Specifically, we are interested in  aeroelastic instabilities such as {\em flutter}. 

Aeroelastic flutter, in a flow-plate system, is a particular type of feedback instability where the flow's aerodynamical loading at the fluid-structure interface destabilizes the otherwise stable damped plate \cite{B, book}. The flutter instability occurs as a bifurcation in the flow parameters, typically that of the unperturbed flow velocity $U$. Such an instability can manifest itself via chaotic plate oscillations \cite{dowellchaos,dowellrecent}, but often occurs in the form of {\em limit cycle oscillations}. Stationary instability---elastic bucking---is also possible, depending on the mechanical forcing and flow quantities in the system. Hence, a variety of interesting questions present themselves: (i) can one detect, from the parameters in the problem, if flutter will occur? (ii) is flutter, so to speak, suppressed by the inclusion of some form of mechanical damping? (iii) in what ways does the long-time behavior of the flow-plate system depend on the initial configuration? Engineers, for instance, often observe/remark {\em that panel flutter only occurs supersonically} \cite{B,dowellrecent}. In this paper, we attempt to address these points in a mathematically rigorous way, in the fully infinite dimensional setting, taken from the strongly coupled flow-plate PDE system presented below. 

Here, we will present the construction of both strong and weak PDE solutions through the semigroup approach, in particular characterizing the domain of the generator for the flow-plate dynamics. We utilize the structure of the potential flow equation to explicitly construct a particular Neumann-to-Dirichlet map for the flow, which permits a closed representation for the plate with memory effects scaled by the characteristic flow velocity. For the resulting non-conservative plate with memory, we construct a smooth, compact global attractor of finite dimension, so long as {\em some} structural damping is present in the plate. This attractor is global in the sense that it attracts bounded sets in the state space with uniform rate, i.e., does not depend on specific initial data. With the attractor in hand, we utilize the finiteness of the dissipation integral, along with the compactness of the plate-to-flow (Neumann) mapping, to show that subsonic trajectories always converge to the stationary set; this is to say, {\em we confirm the engineering assertion that subsonic panel systems have stationary end behavior}.

Many of the results presented in this paper come from a variety of places, e.g., \cite{springer,b-c,b-c-1,LBC96, book, websterlasiecka}. These references span books, book chapters, surveys, older papers, and newer papers, in some cases with sparse details. To our knowledge, a full rigorous discussion of this subsonic flow-panel system---from PDE model to well-posedness to attractors to stabilization---has not appeared. Thus, we choose a particular situation where a full exposition is possible.
Indeed, we consider the case of {a clamped von Karman plate} with rotational inertia effects included, as well as appropriately scaled (square root type) structural damping, precisely because the model permits a clean sequence of rigorous results and a linear discussion. The style of the paper is to provide all of the main results (and the lemmata on which they are built) in a formal mathematical way, without necessarily proving each of these supporting facts. In the relevant cases below, we clearly provide the reference for the proofs and further discussion. In some cases below, we provide new approaches and/or proofs for the system (e.g., making use of the more recent quasi-stability approach) which have not been applied to this model in the literature. And, in the case of our main stabilization result, we provide a detailed proof. 

\subsection{Panel Flutter Model}
The large deflections of an aeroelastic panel are typically modeled via the plate theory of von Karman \cite{ciarlet,springer}, going back to the early aeroelasticity literature \cite{bolotin,dowellnon}. Here, we also choose this cubic-type model based upon the quadratic strain-displacement law \cite{lagnese,ciarlet}. At equilibrium, we model the center line of the plate by a bounded domain\footnote{If $\Omega$ is a rectangle, no results below are affected.} $\Omega \subset \{x_3 = 0\}$ with smooth boundary $\Gamma$ having unit outward normal $\nu$. 

The inviscid potential flow corresponds to the linearization of compressible Navier-Stokes about the stationary state $U\mathbf e_1$, i.e., constant flow of velocity $U$ in the $x$-direction; we normalize the flow parameters so that $U=1$ corresponds to the speed of sound, i.e., Mach 1.  The flow environment we consider as $\realsthree_+ = \{\xb \in\realsthree\, :\, x_3> 0\}$ so that the plate's centerline $\Omega \subset \partial \mathbb R_+^3$.

In this situation, then, $u: \Omega \times [0,\infty) \to \mathbb R$ corresponds to the transverse plate deflections; $\phi: \mathbb R^3_+ \times [0,\infty) \to \mathbb R$ is the perturbation velocity potential, such that $\mathbf v = U\mathbf e_1+\nabla \phi$ is the perturbed flow field. Then, the evolution flow-plate system of interest here is given by:
\begin{equation}\label{flowplate}\begin{cases}
(1-\alpha\Delta)u_{tt}+\Delta^2u+k(1-\alpha\Delta)u_t(t) +f_v(u)= p_0+\big[\partial_t+U\partial_{x_1}\big]\phi\big|_{\Omega} & \text { in }~~ \Omega\times (0,T),\\
u(0)=u_0;~~u_t(0)=u_1 & \text{ in } ~~\Omega,\\
u=\Dn u = 0 & \text{ on } ~\partial\Omega\times (0,T),\\
(\partial_t+U\partial_{x_1})^2\phi=\Delta \phi & \text { in }~~ \realsthree_+ \times (0,T),\\
\phi(0)=\phi_0;~~\phi_t(0)=\phi_1 & \text { in }~~ \realsthree_+\\
\partial_{x_3} \phi = \big[(\partial_t+U\partial_{x_1})u\big]_{\text{ext}}& \text{ on } ~\{x_3=0\} \times (0,T).
\end{cases}
\end{equation}

Above, parameters such as mass, density, thickness, and stiffness have been scaled out. The remaining parameters are those relevant to this mathematical analysis: $U, \alpha, k$. Here, $\alpha>0$ corresponds to the accommodation of rotational inertia of plate filaments \cite{lagnese}, whereas $k >0$ corresponds to the presence of structural damping (of so called square-root type).  The function $p_0(\mathbf x)$ corresponds to a stationary pressure on the top surface of the plate. The notation $[\cdot]_{\text{ext}}$ above means extension by zero from $\Omega \to \mathbb R^2$ with corresponding restriction $r_{\Omega}[\cdot]$, and the standard trace operator denoted by $\gamma[\cdot]$ onto $\partial \Omega$ or $\{x_3=0\}$ is utilized (of course in the appropriate functional senses). 

The (scalar) von Karman nonlinearity \cite{lagnese,ciarlet,springer} is given through the von Karman bracket and the Airy stress function. The bracket is $$[u,w]=(\partial_{x_1}^2u)(\partial_{x_2}^2w)+(\partial_{x_2}^2u)(\partial_{x_1}^2w)-2(\partial_{x_1}\partial_{x_2}u)(\partial_{x_1}\partial_{x_2}w),$$ 
while the Airy function is defined as an elliptic solver, namely, $v=v(u)$ is the solution to
\begin{equation}
\Delta^2 v = -[u,u]~~  \text{in}~~\Omega~~,~~
v=\partial_{\nu}v=0 ~~ \text{on}~~\Gamma.
\end{equation}
Finally, letting $F_0(\mathbf x)$ represent a stationary planar force on $\Omega$ corresponding to in-plane plate loading (pre-stressing), we have {\em the von Karman nonlinearity}
\begin{equation} f_v(u)=-[u,v(u)+F_0].\end{equation} 
\begin{remark}
The damping above in \eqref{flowplate} we delineate as ``square-root" type, as it is in some sense an interpolation between weak damping of the form ~$+k_0u-t$, and Kelvin-Voigt damping of the form~ $+k_2\Delta^2u_t$. This type of damping it is popular in engineering since it most accurately reproduces physical (low) modal damping decay rates. See \cite[Section 2.1]{HHWW} for detailed discussion and further references.
\end{remark}

\subsection{Notation and Conventions} In this paper we utilize the standard notation and conventions for $L^p(\mathscr O)$ spaces and Sobolev spaces of order $s \in \mathbb R$, $H^s(\mathscr O)$ where $\mathscr O$ is some domain. The space $H_0^s(\Omega)$ denotes the completion of the test functions $C_0^{\infty}(\Omega)$ in the $H^s(\Omega)$ norm with dual $H^{-s}(\Omega)$. For our norm notation, we will denote $||\cdot||_{H^s(\mathscr O)} = ||\cdot||_s$, where the spatial domain will be clear from context; we will identify $||\cdot||_{L^2(\mathscr O)}=||\cdot||$, omitting $s=0$. Inner products on $\mathbb R^3_+$ will be denoted by $(\cdot,\cdot) := (\cdot,\cdot)_{L^2(\mathbb R_+^3)}$ and on $\partial \mathbb R^3_+$ we utilize the notation $\langle \cdot,\cdot\rangle := (\cdot,\cdot)_{L^2(\Omega)}.$ The trace operator on $H^1(\mathscr O)$ spaces will be denoted by $\gamma[\cdot]$ with range in $H^{1/2}(\partial \mathscr O)$. We denote an open ball of radius $R$ in a Banach space $X$ by $B_R(X)$.

\textbf{Throughout the entirety of this paper, unless otherwise explicitly stated, we consider $U \in [0,1)$.}

\subsection{Energies and Solutions}\label{energiessols}
The energetic constraints for solutions manifest themselves through natural topological requirements, namely, for $L_{\alpha}^2(\Omega)$ given by $$||\cdot||_{L_{\alpha}^2(\Omega)}^2 := \alpha ||\nabla \cdot||_{L^2(\Omega)}^2+||\cdot||_{L^2(\Omega)}^2,$$ {\em finite energy solutions} should have the properties: \begin{equation}\label{finenreq} u \in C(0,T; H_0^2(\Omega))\cap C^1(0,T;L_{\alpha}^2(\Omega)); ~~~~ \phi \in C(0,T;W_1(\realsthree_+))\cap C^1(0,T;L^2(\realsthree_+)),\end{equation}
where $W_1(\R^3_+)$ denotes the homogeneous Sobolev space of order $1$. Here, $$W_1(\realsthree_{+})=\left\{\phi \in L_{loc}^2(\realsthree_+) : \nabla\phi\in L^2(\realsthree_{+}) \right\},$$ which is to say the space topologized by the {\em gradient norm} $||\nabla \phi||_{L^2(\mathbb R_+^3)}$ without $L^2(\mathbb R^3_+)$ norm control.

   To set provide a dynamical systems framework, the principal state space is taken to be \begin{equation}\label{yspace}Y = Y_{fl}\times Y_{pl} \equiv \big(W_1(\realsthree_+) \times L^2(\realsthree_+)\big)\times\big(H_0^2(\Omega) \times L_{\alpha}^2(\Omega)\big),\end{equation}
 We will also consider a stronger space on finite time intervals: \begin{equation}\label{ysspace}Y_s \equiv H^1(\realsthree_+)\times L^2(\realsthree_+) \times H_0^2(\Omega) \times L_{\alpha}^2(\Omega).\end{equation}

The energies corresponding to finite energy solutions of \eqref{flowplate}, and the above space $Y$, are given below.
\begin{align}\label{energies}
E_{pl} = & ~\dfrac{1}{2}\big[||u_t||_{L_{\alpha}^2 (\Omega)}^2+ ||\Delta u ||^2+\frac{1}{2}||\Delta v(u)||^2\big]-\langle F_0,[u,u]\rangle+\langle p_0,u\rangle, \\
E_{fl}=  &~\dfrac{1}{2}\big[||\phi_t||^2+||\nabla \phi||^2-U^2||\partial_{x_1}\phi||^2\big],~~\\E_{int} =&  ~2U\langle\gamma[\phi],\partial_{x_1}u\rangle,\\
\mathcal E =&~ E_{pl}+E_{fl}+E_{int}.
\end{align}

The pair $(\phi,u)$ as in \eqref{finenreq} is said to be a {\em strong solution} to \eqref{flowplate} on $[0,T]$ if:
\begin{itemize}
	\item $(\phi_t,u_t) \in L^1\Big(a,b; H^1(\realsthree_+)\times H^2(\Omega)\cap H_0^1(\Omega)\Big)$ for any $(a,b) \subset [0,T]$.
	\item $(\phi_{tt},u_{tt}) \in L^1\Big(a,b; L^2(\realsthree_+)\times H_0^1(\Omega)\Big)$ for any $(a,b) \subset [0,T]$.
	\item $\phi(t)\in H^2(\realsthree_+)$ and $\Delta^2 u(t)+k(1-\Delta)u_t(t) \in H^{-1}(\Omega)$ a.e. $t\in [0,T]$.
	\item The equation ~~$(1-\alpha\Delta)u_{tt}+\Delta^2 u +k(1-\alpha\Delta)u_{t}+f_v(u)=p_0+r_{\Omega}\gamma[\phi_t+U\phi_x]$ holds in $H^{-1}(\Omega)$ a.e.  $t >0$.
	\item The equation ~~$(\partial_t + U\partial_x)^2\phi=\Delta \phi$ holds a.e. $t>0$ and a.e. $\xb \in \realsthree_+$.
	\item The boundary conditions in \eqref{flowplate} hold a.e. $t\in [0,T]$ and a.e. $\xb \in \Gamma$, $\xb \in \realstwo$ respectively. 
	\item The initial conditions are satisfied point-wisedly; that is ~$\phi(0)=\phi_0, ~\phi_t(0)=\phi_1, ~u(0)=u_0, ~u_t(0)=u_1.$
\end{itemize} Strong solutions are {\em point-wise} or classical solutions.

The pair $(\phi,u)$ is said to be a {\em generalized solution} to  problem (\ref{flowplate}) on the interval $[0,T]$ if there exists a sequence of strong solutions $(\phi^n(t);u^n(t))$ with some initial data $(\phi^n_0,\phi^n_1; u^n_0; u^n_1)$ such that
$(\phi^n,u^n)$ converge to $(\phi,u)$ in the sense of $C([0,T] ; Y_s)$ as $n\to \infty$. Such solutions correspond to semigroup solutions for initial data in $Y$ rather than the domain of the generator.

Lastly,  the pair $(u,\phi)$, with $$u \in \mathscr{W}_T\equiv \Big\{ u \in L^{\infty}\big(0,T;H_0^2(\Omega)\big),~\partial_t u(\xb,t)\in L^{\infty}\big(0,T;H_0^1(\Omega)\big)\Big\}$$$$\phi  \in \mathscr{V}_T\equiv \Big\{ \phi \in L^{\infty}\big(0,T;H^1(\realsthree_+)\big),~\partial_t \phi(\xb,t)\in L^{\infty}\big(0,T;L^2(\realsthree_+)\big)\Big\},$$
 is said to be a {\em weak solution} to \eqref{flowplate} on $[0,T]$ if
\begin{itemize}
	\item $u(\xb,0)=u_0(\xb),u_t(\xb,0)=u_1(\xb)$ 
	and $\phi(\xb,0)=\phi_0(\xb),\phi_t(\xb,0)=\phi_1(\xb)$
	\item{
	$\ds 
	\int_0^T\Big((1-\alpha\Delta)\langle\partial_t u(t),\partial_t w(t)\big\rangle -k(1-\alpha\Delta)\langle\partial_t u(t), w(t)\big\rangle -\big\langle\Delta u(t),\Delta w(t)\big\rangle 
	- \big\langle f_v(u(t))-p_0,w(t)\big\rangle  \\ -\big\langle r_{\Omega}\gamma[\phi(t)],\partial_t w(t)+U\partial_{x_1} w(t)\big\rangle \Big)~dt
	=\big\langle u_1-r_{\Omega}\gamma[\phi_0],w(0)\big\rangle_{L^2(\Omega)}$}
	\newline for all test functions $w\in \mathscr{W}_T$ with $w(T)=0$. 
	\item $\ds \int_0^T \Big[\big((\partial_t+U\partial_{x_1})\phi(t),(\partial_t+U\partial_{x_1})\psi(t)\big) -\big( \nabla \phi(t),\nabla \psi(t)\big) \\ \phantom{\hskip4cm}+\big\langle (\partial_t+U\partial_{x_1})u(t),r_{\Omega}\gamma[\psi(t)]\big\rangle\Big]dt
	=\big(\phi_1+U\partial_{x_1} \phi_0,\psi(0)\big) $ \newline for all test functions $\psi\in\mathscr{V}_T$ such that $\psi(T)=0$. 
\end{itemize}
It is clear that strong solutions are generalized, and we state without proof that generalized solutions are in fact weak---see the discussion of abstract second order equations in \cite{springer}.

\subsection{Outline and Overview of Results Presented Here}

In {\bf Section 2 } we rewrite the problem abstractly, as dictated by the principal spatial operators for the plate and flow equations. We show, via a semigroup approach, that the underlying linear problem is well-posed via the Lumer-Phillips Theorem. From there, the locally Lipschitz nature of the von Karman nonlinearity yields local-in-time strong and generalized solutions, which are made global by specific bounds on trajectories. We utilize tight control of lower order terms via the superlinear nature of the von Karman nonlinearity, as well as the Hardy inequality to control interactive, non-dissipative flow-plate terms. The global in time bounds on trajectories provide finiteness of the dissipation integral, a critical piece of the stability analysis to follow.

{\bf Section 3} describes the stationary problem associated to the flow-plate dynamics \eqref{flowplate}. We quote results about the existence of stationary solutions, and remark on the mutliplicity of such solutions in general, concluding that the stationary set is generically finite owing to the Sard-Smale theorem. 

In {\bf Section 4} we look at the decoupled Neumann-type wave equation corresponding to the subsonic flow, driven by a given downwash. We decompose the effects of initial and given boundary data, and discuss stability and Huygen's principle in this context. In fact, as the hyperbolic equation is posed on the half space, we have an explicit solution representation (via transform methods) for $\phi$ in terms of the Neumann data. 
{\bf Section 5} uses this explicit solution form to compute the Dirichlet trace of the material derivative of the flow potential (the pressure). Since the flow data are in fact taken from the plate equation, this calculation allows us to consider a closed plate system with a memory-type term (as well as damping and non-conservative terms). In essence, this {\em reduces} the flow-plate dynamics to a memory-type plate dynamics.

For the memory-type dynamical system corresponding to plate solutions we construct a compact global attractor in {\bf Section 6}. For this non-conservative dynamical system, we must explicitly construct the absorbing ball, which is done via Lyapunov methods. We utilize the quasi-stability approach to obtain asymptotic compactness (yielding the existence of the compact global attractor), and quasi-stability on the absorbing ball provides finite dimensionality of the attractor in the state space as well as additional smoothness.

Finally, in {\bf Section 7}, we present the main result here on stabilization of the dynamics to the equilibria set. After showing that plate trajectories in fact converge to stationary points, we then {\em lift} this convergence to the flow. This result utilizes: compactness of the attractor, finiteness of the dissipation integral, compactness of the specific Neumann-to-Dirichlet  map provided through the explicit solution representation discussed above.

The main results in each of the above sections are stated precisely in their respective sections. Proofs are provided in most cases, but where they are not, precise references are given. 

\subsection{Discussion of Results Herein and Relationship to the Literature}

Let us discuss the previous mathematical work on this and closely related models. Early engineering references address panel flutter (in the comparable formulation to \eqref{flowplate}) as motivated by the paneling and external layers on aircraft and projectiles \cite{dowellnon,B}. We make special note of the work of Bolotin \cite{bolotin}, whose early work has the mathematical formulation of the flow-plate system here, as  well as good mathematical insight into a variety of qualitative features of the dynamics. Later, the  work of Chueshov et al. began to address flow-plate models in a modern PDE and dynamical systems sense \cite{b-c,LBC96,b-c-1}---indeed, Chueshov should be given credit as a driving force for the analysis of this and other models of mathematical aeroelasticity. More broadly, we mention other seminal works in mathematical aeroelasticity \cite{bal0,HM}, as well as the surveys \cite{survey1,survey2} and the book chapter \cite{book} which provide an overview of mathematical aeroelasticity, including some modeling discussions for  configurations other than that of a panel.

Specific to the models described in this treatment,  we point to early work on the delay dynamical system as it appears here can be found in \cite{oldchueshov1,Chu92b}. Later, the works \cite{LBC96,b-c,b-c-1} consider the system presented here as \eqref{flowplate}. Well-posedness is addressed through Galerkin constructions with good microlocal estimates \cite{miyatake1973} applied by decoupling the flow-plate system. Later, stabilization-type results appeared for the flow plate system when beneficial thermal effects are accounted for in the plate \cite{ryz,ryz2}. In the monograph \cite{springer}, many results appeared for attractors for the plate system, though not explicitly using the more recently developed quasi-stability theory. A proof of convergence to equilibrium for the model discussed here was outlined in both  \cite{chuey,springer}, without details. In the case without rotational inertia---namely $\alpha=0$---well-posedness was obtained for the first time in \cite{webster} using a semigroup approach, and later again with boundary dissipation in \cite{websterlasiecka}. Stabilization to equilibria was considered in the $\alpha=0$ case (with only weak damping) in the sequence \cite{stab1} and \cite{stab2}; these results are much more complicated, the nonlinearity is not principally that of von Karman, and the results are in some sense partial. 

Thus, for the treatment at hand, we choose a physically relevant scenario where many results can be presented in a clean and clear manner, utilizing the state of the art for modern dynamical systems theory. The results here, in some cases, are not sharp (with respect to parameters, for instance), and not all of which are novel. In fact, many of the results presented here have appeared in disjointed publications listed above over the past 30 years (with varying degrees of detail). Some of the results presented here which have appeared before are given here with novel proofs. 

The highlights of what is presented here are:
\begin{itemize} \item  a semigroup approach to well-posedness of strong and generalized solutions to the flow-plate system. This is a recent treatment for  these strongly coupled,  non-dissipative dynamics, and treats the system as a whole, exploiting cancellations through calculations performed on strong solutions rather than relying on technical trace results for decoupled dynamics;
\item  a direct construction of an absorbing ball for the plate dynamics, required because of the non-gradient structure of the reduced dynamics, making use of a non-standard Lyapunov approach on the reduced plate dynamical system with memory;
\item the use of modern quasi-stability theory \cite{quasi} on the absorbing ball, yielding, all at once, a compact global attractor for plate dynamics that is also smoother than the finite energy space as well as of finite fractal dimensional;
\item a complete proof of subsonic convergence to equilibrium, utilizing the compactness of the plate attractor, finiteness of the dissipation integral, and the compactness of the plate-to-flow lifting.
\end{itemize}

\section{Well-posedness and Boundedness of Solutions}
In this section we will construct strong and generalized (and hence weak) solutions via a semigroup approach. This approach was first utilized in the case $\alpha=0$ in \cite{webster} and some subsequent references \cite{supersonic,websterlasiecka,book} we based on it (also for $\alpha=0$). We provide here the abstract setup and semigroup generation result for the model at hand, \eqref{flowplate} when $\alpha>0$. We will re-write the linear problem abstractly on the finite energy space, using appropriate constituent operators. The generator, then, for the underlying linear flow-plate dynamics is $\omega$-dissipative \cite{pazy} and maximal in the appropriate sense, yielding generation. Then, exploiting the local-Lipschitz property of the von Karman nonlinearity, we will obtain local-in-time solutions for the nonlinear problem. Lastly, a priori-estimates on solutions (which exploit the superlinear nature of the nonlinearity) provide global solutions in $Y_s$ (as in \eqref{ysspace}) on any $[0,T]$. Further energy estimates ensure that the solution is uniformly bounded in time in the extended space $Y$ as in \eqref{yspace}.

\subsection{Operators and Abstract Restatement of the Linear Problem}
Let $A:\mathscr{D}(A)\subset L^2(\realsthree_+) \to L^2(\realsthree_+)$ be the positive (recall: $0 \le U<1$), self-adjoint operator  $$Af=-\Delta f+U\partial^2_{x_1} f+\mu f~, \hskip1cm \mathscr{D}(A)=\big\{f \in H^2(\realsthree_+): \partial_{x_3}f \big|_{x_3=0}=0\big\},$$
where $\mu>0$.\footnote{The perturbation $\mu>0$ is introduced to dispense with the zero eigenvalue; later that this will be taken as a bounded perturbation on $Y$ and removed to obtain the problem as originally stated.}  It is clear, then, that  $\mathscr{D}(A^{1/2}) = H^1(\realsthree_+)$ (in the sense of topological equivalence).
The corresponding Neumann map $N_0: L^2(\Omega) \to L^2(\realsthree_+)$ obtained through Green's formula \cite{redbook} is given by \begin{equation} \label{Neumannmap} \psi=N_0 w \iff (-\Delta+U\partial_{x_1}^2+\mu)\psi=0 \text{ in } \realsthree_+ 
~\text{ with    } ~\dfrac{\partial \psi}{\partial x_3}\Big|_{\Omega}=[w]_{ext}.\end{equation}  We have from \cite{springer,redbook} that ~ $\displaystyle A^{3/4-\epsilon}N_0: L^2(\mathbb R^2) \to L^2(\realsthree_+)$ is continuous. Moreover, we have the adjoint identification of the Dirichlet trace ~
$$\displaystyle N_0^*A f=\gamma[f], ~f \in H^1(\realsthree_{+}).$$
Introduce now the differential operator $\displaystyle D(\phi) \equiv 2U\phi_x$ defined on $\mathscr D(A^{1/2})$ 
and the biharmonic operator ~$\cA u = \Delta^2 u $, ~defined on $\mathscr D (\mathscr A) = (H^4\cap H_0^2) (\Omega)$. 
In this case, $\mathscr A$ is a positive, self-adjoint operator on $L^2(\Omega)$ with
$D(\cA^{1/2} ) = H_0^2(\Omega)$.
Lastly, we define the operator $M_{\alpha}= (1-\alpha \Delta)$ with domain $\mathscr D (M_{\alpha})= (H^2\cap H_0^1)(\Omega)$ and $\mathscr D(M_{\alpha}^{1/2})=H_0^1(\Omega)=L^2_{\alpha}(\Omega)$.

Consider the above, we have the abstract formulation of the homogeneous linear version of \eqref{flowplate}:
\begin{equation}\label{abstractsystem} \begin{cases}
\phi_{tt}+A\big(\phi+N_0(u_t+Uu_{x_1})\big)+\mu \phi +D(\phi_t)=0, ~\text{ in }~[\cD(A^{1/2})]'\\ 
M_{\alpha} u_{tt}+kM_{\alpha}u_t+\mathscr{A}u-N_0^*A[\phi_t+U\phi_{x_1}]=0 , ~\text{ in }~[\cD(\cA^{1/2})]'\\
\phi(0)=\phi_0, ~\phi_t(0)=\phi_1, ~u(0)=u_0, ~u_t(0)=u_1.
\end{cases}
\end{equation}
The natural state space then becomes $$Y_{\alpha} \equiv Y_1 \times
Y_{2,\alpha} =\mathscr{D}(A^{1/2}) \times L^2(\realsthree_+) \times \mathscr{D}(\mathscr{A}^{1/2}) \times 
\mathscr{D}(M_{\alpha}^{1/2}),$$ 
with natural inner product:
$y=(\phi_1, \phi_2; u_1, u_2)$ and $y'=(\phi_1', \phi_2'; u_1', u_2')$ 
\begin{align}\label{inner1} (y,y')_{Y_{\alpha}} =&(\phi_1,\phi_1')_{\mathscr{D}(A^{1/2})}+(\phi_2,\phi_2')_{L^2(\mathbb R^3_+)}
+( u_1,u_1')_{\mathscr{D}(\mathscr{A}^{1/2})}+(
u_2,u_2')_{\mathscr D(M_{\alpha}^{1/2})}.
\end{align}
For $\alpha>0$ fixed and any $\mu>0$, we have immediately that $Y_{\alpha}=Y_{s}$ in the sense of topological equivalence.

Writing (\ref{abstractsystem}) as a first order system for $y=(\phi_1,\phi_2,u_1,u_2)^T$ leads to
the overall dynamics operator $T_{\alpha}: \mathscr{D}(T_{\alpha})\subset Y_{\alpha} \to Y_{\alpha}$ expressed (in a distributional sense) by:
\begin{align} \label{T}
T_{\alpha} y=
\begin{bmatrix}
0 & -I & 0 &0\\ 
A & D& UAN_0\partial_{x_1} & AN_0\\
0&0&0&-I\\
- M^{-1}_{\alpha}UN_0^*A\partial_{x_1} & -M^{-1}_{\alpha}N_0^*A  &M^{-1}_{\alpha} \mathscr{A} & kI
\end{bmatrix}\begin{pmatrix}
\phi_1\\
\phi_2\\
u_1\\
u_2
\end{pmatrix},
\end{align}
with \begin{align} \mathscr{D}(T_{\alpha})  = \Big\{ y \in \big[\mathscr{D}(A^{1/2})\big]^2 \times \big[\mathscr{D}(\mathscr{A}^{1/2}]\big]^2:&~ \phi_1 + N_0 (u_2 + U \Dx u_1) \in \mathscr D(A); \\~ 
&~M^{-1/2}_{\alpha} [\mathscr{A}u_1-N_0^*A(\phi_2+U\partial_{x_1} \phi_1)]\in L^2(\Omega)\Big\}.\nonumber\end{align} 
In this case, the linearized, homogeneous version of \eqref{flowplate}, perturbed by $\mu$,  is represented by the abstract ODE \begin{equation}\label{firstorder}
\dfrac{dy}{dt}+T_{\alpha}y=0; ~ y(0)=y_0 \in Y_{\alpha}
\end{equation}

\subsection{Generation}
In this section we use the Lumer-Philips theorem \cite{pazy} to obtain:
\begin{theorem}
The operator $T_{\alpha}: \mathscr D(T_{\alpha}): Y_{\alpha} \to Y_{\alpha}$ is $\omega$-accretive and maximal, hence $-T_{\alpha}$ is the generator of a strongly continuous semigroup of bounded linear operators on $Y_{\alpha}$ (and hence also on $Y_{s}$). 
\end{theorem}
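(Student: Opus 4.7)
The plan is to apply the Lumer--Phillips theorem to $-T_\alpha$ on the Hilbert space $Y_\alpha$, verifying $\omega$-dissipativity of $-T_\alpha$ (equivalently $\omega$-accretivity of $T_\alpha$) together with range surjectivity of $\lambda I + T_\alpha$ for some $\lambda > \omega$.

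For accretivity I fix $y = (\phi_1,\phi_2,u_1,u_2) \in \mathscr{D}(T_\alpha)$ and expand $(T_\alpha y,y)_{Y_\alpha}$ row-by-row. Two pairs of cross terms annihilate by self-adjointness: $(-\phi_2,\phi_1)_{\mathscr{D}(A^{1/2})} + (A\phi_1,\phi_2)_{L^2(\mathbb{R}_+^3)} = 0$, and analogously $(-u_2,u_1)_{\mathscr{D}(\mathscr{A}^{1/2})} + (M_\alpha^{-1}\mathscr{A}u_1, u_2)_{\mathscr{D}(M_\alpha^{1/2})} = 0$. The convective term contributes $(D\phi_2,\phi_2) = 2U(\partial_{x_1}\phi_2,\phi_2)_{L^2(\mathbb{R}_+^3)} = 0$ by tangential integration by parts (legitimate since $\phi_2 \in H^1(\mathbb{R}^3_+)$), and the damping row contributes the non-negative quantity $k\|u_2\|_{\mathscr{D}(M_\alpha^{1/2})}^2$. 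The four remaining trace-type cross terms are reduced via the adjoint relation $N_0^*Af = \gamma[f]$; the contributions carrying $u_2$ and $\gamma[\phi_2]$ cancel exactly, leaving
\begin{equation*}
(T_\alpha y, y)_{Y_\alpha} = k\|u_2\|_{\mathscr{D}(M_\alpha^{1/2})}^2 + U\langle \partial_{x_1} u_1, \gamma[\phi_2]\rangle - U\langle \gamma[\partial_{x_1}\phi_1], u_2\rangle.
\end{equation*}
Because $u_2 \in H_0^1(\Omega)$, a tangential integration by parts rewrites the delicate term as $\langle \gamma[\partial_{x_1}\phi_1], u_2\rangle = -\langle \gamma[\phi_1], \partial_{x_1} u_2\rangle$, and the standard half-space trace estimate $\|\gamma[\phi]\|_{L^2(\Omega)} \leq C\|\phi\|_{H^1(\mathbb{R}_+^3)}$ together with Cauchy--Schwarz gives $(T_\alpha y + \omega y, y)_{Y_\alpha} \geq 0$ for some $\omega = \omega(U) \geq 0$.

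For maximality I solve $(\lambda I + T_\alpha)y = f$ with $f \in Y_\alpha$ and $\lambda > \omega$. The first and third components are algebraic, giving $\phi_2 = \lambda\phi_1 - f_1$ and $u_2 = \lambda u_1 - f_3$; substituting into the remaining two equations reduces matters to a coupled elliptic system for $(\phi_1,u_1) \in \mathscr{D}(A^{1/2}) \times \mathscr{D}(\mathscr{A}^{1/2})$. I associate to this system the natural sesquilinear form on $\mathscr{D}(A^{1/2}) \times \mathscr{D}(\mathscr{A}^{1/2})$; the diagonal contributions $\lambda^2\|\phi_1\|^2 + \|A^{1/2}\phi_1\|^2 + \lambda(k+\lambda)\|u_1\|^2_{\mathscr{D}(M_\alpha^{1/2})} + \|\mathscr{A}^{1/2}u_1\|^2$ dominate, while the off-diagonal coupling---once again handled through $N_0^*A = \gamma$ and the tangential-integration-by-parts trick above---is absorbed for $\lambda$ sufficiently large. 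Lax--Milgram then yields a unique weak solution, and the built-in compatibility conditions in the domain (namely $\phi_1 + N_0(u_2 + U\partial_{x_1}u_1) \in \mathscr{D}(A)$ and $M_\alpha^{-1}[\mathscr{A}u_1 - N_0^*A(\phi_2 + U\partial_{x_1}\phi_1)] \in L^2(\Omega)$) are verified \emph{a posteriori} directly from the elliptic equations.

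The principal obstacle is the interactive remainder $U\langle \gamma[\partial_{x_1}\phi_1], u_2\rangle$ encountered in the accretivity step: at the finite-energy level $\phi_1 \in H^1(\mathbb{R}_+^3)$, the quantity $\gamma[\partial_{x_1}\phi_1]$ has no classical meaning as an $H^{1/2}(\mathbb{R}^2)$ trace. The tangential integration-by-parts maneuver, admissible precisely because $u_2 \in H_0^1(\Omega)$ vanishes on $\partial\Omega$, is the decisive move that redistributes the derivative onto $u_2$ and permits closure of the estimate modulo an $\omega$-shift---thereby accounting mathematically for the non-dissipative, non-conservative nature of the coupled flow-plate generator.
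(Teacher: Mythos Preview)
Your accretivity computation is correct up through the displayed identity
\[
(T_\alpha y,y)_{Y_\alpha}=k\|u_2\|_{\mathscr D(M_\alpha^{1/2})}^2+U\langle\partial_{x_1}u_1,\gamma[\phi_2]\rangle-U\langle\gamma[\partial_{x_1}\phi_1],u_2\rangle,
\]
and your tangential integration by parts on the second interactive term is legitimate.  The gap is in the first interactive term $U\langle\partial_{x_1}u_1,\gamma[\phi_2]\rangle$: you cannot absorb it by an $\omega$-shift in the $Y_\alpha$ inner product.  The trace bound you invoke gives $\|\gamma[\phi_2]\|_{L^2(\Omega)}\le C\|\phi_2\|_{H^1(\mathbb R_+^3)}$, but the $Y_\alpha$ norm carries only $\|\phi_2\|_{L^2(\mathbb R_+^3)}$ in the second slot; an $H^1$ bound on $\phi_2$ is a domain property, not a state-space norm, so no choice of $\omega$ yields $(T_\alpha y+\omega y,y)_{Y_\alpha}\ge 0$.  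Integrating by parts the other way only makes things worse, since it produces $\gamma[\partial_{x_1}\phi_2]$.

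This is precisely why the paper does \emph{not} work in the natural inner product.  It introduces an equivalent inner product
\[
((y,y'))=(y,y')_{Y_\alpha}+U\langle\partial_{x_1}u_1,r_\Omega\gamma[\phi_1']\rangle+U\langle\partial_{x_1}u_1',r_\Omega\gamma[\phi_1]\rangle+\lambda\langle\nabla u_1,\nabla u_1'\rangle,
\]
engineered so that, when one forms $((T_\alpha y,y))$, the additional contribution $-U\langle\partial_{x_1}u_1,\gamma[\phi_2]\rangle$ (coming from $[T_\alpha y]_1=-\phi_2$) exactly annihilates the bad term, and the remaining interaction $-U\langle\partial_{x_1}u_2,\gamma[\phi_1]\rangle$ cancels against your integrated-by-parts term.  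What survives is $k\|u_2\|_{\mathscr D(M_\alpha^{1/2})}^2-\lambda\langle\nabla u_2,\nabla u_1\rangle$, which \emph{is} controlled by $\omega\|y\|_{Y_\alpha}^2$.  You should adopt this device (or an equivalent one) for the dissipativity step; your maximality argument via Lax--Milgram on the reduced $(\phi_1,u_1)$ system is a reasonable variant of the paper's Minty-type argument on the reduced $(\phi_2,u_2)$ system and should go through once the accretivity is repaired.
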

\begin{proof}[Proof Outline]
To show that $T_{\alpha}$ is $\omega$-accretive, we first consider a modified inner-product on $Y_{s}$:
For $y=(\phi_1, \phi_2; u_1, u_2)^T$ and $y'=(\phi_1', \phi_2'; u_1', u_2')^T$ 
\begin{align*} ((y,y'))\equiv &~(y,y')_{Y_{\alpha}}+U\langle \partial_{x_1} u_1,r_{\Omega} \gamma[\phi_1']\rangle+U\langle \partial_{x_1} u_1',
r_{\Omega}\gamma[\phi_1]|_{\Omega}\rangle+\lambda\langle \nabla u_1,\nabla u_1'\rangle, \end{align*} for  $\lambda = \lambda (U)$ a parameter chosen to ensure positivity of the inner-product $((\cdot,\cdot))$. It is a straightforward exercise using the Sobolev embedding theorems and the Hardy inequality to check that $((\cdot,\cdot))$ is in fact an inner product on $Y_{s}$ whose topology is equivalent to that given by the original inner product $(\cdot,\cdot)_{Y_{\alpha}}$. 

This inner-product $((\cdot,\cdot))$ on $Y_{\alpha}$ is built to produce a particular cancellation of trace terms in the accretivity calculation. Indeed, one my check that for $\displaystyle\omega>\frac{\lambda(U)}{2}$: $$\big(\big(T_{\alpha} y+\omega y ,y\big)\big)\ge 0,  ~
y \in \mathscr{D}(T_{\alpha}),$$ and hence $T_{\alpha}$ is $\omega$-dissipative in $((\cdot,\cdot))$ on $Y_{\alpha}$.

To show that $T_{\alpha}$ is maximal, we need to show that $\mathscr{R}(T_{\alpha}+\eta I)=Y_{s}$ for some $\eta>0$.
Given $x=(\psi_1,\psi_2; w_1, w_2)^T\in Y_{s}$ we must solve $\eta y+Ty=x$ for $y=(\phi_1,\phi_2;u_1,u_2)^T\in\mathscr{D}(T_{\alpha})$:
\begin{equation}\label{rangecondition}
\begin{cases}
\eta \phi_1-\phi_2&=\psi_1\in \mathscr{D}(A^{1/2})\\
\eta \phi_2+A[\phi_1+N_0(u_2+U\partial_{x_1} u_1)]+D(\phi_2)&=\psi_2\in L^2(\realsthree_+)\\
\eta u_1-u_2 &=w_1\in\mathscr{D}(\mathscr{A}^{1/2})\\
(\eta+k) u_2+ M^{-1}_{\alpha} [\mathscr{A}u_1-N_0^*A(\phi_2+U\partial_{x_1} \phi_1)]&=w_2\in
L^2_{\alpha}(\Omega).
\end{cases}
\end{equation} Eliminating $\phi_1$ and $u_1$, and applying $M_{\alpha}$ to
the last equation in (\ref{rangecondition}), we obtain the operator
\begin{equation}\label{L}
\mathscr G=
\begin{pmatrix}
\dfrac{1}{\eta} A+D+\eta I&AN_0(I+\dfrac{U}{\eta} \partial_x)\\[.25cm]
- N_0^*A(I+\dfrac{U}{\eta} \partial_{x_1})&\dfrac{1}{\eta}  \mathscr{A}+(\eta+k)  M_{\alpha} \end{pmatrix},
\end{equation} 
and the equation
\begin{equation}\label{rangeconditionreduced}
\mathscr G\begin{pmatrix}
\phi_2\\[.25cm]
u_2
\end{pmatrix}=\begin{pmatrix}
\psi_2-\dfrac{1}{\eta} A\psi_1-\dfrac{U}{\eta} AN_0\partial_x w_1\\[.25cm]
M_{\alpha} w_2-\dfrac{1}{\eta}
\mathscr{A}w_1+\dfrac{U}{\eta} 
N_0^*A\partial_{x_1}\psi_1.
\end{pmatrix} \in [\mathscr{D}(A^{1/2})]' \times [\mathscr{D}(\mathscr{A}^{1/2})]'.
\end{equation} 
Taking $V = \mathscr D(A^{1/2})\times\mathscr D(\mathscr A^{1/2})$, and considering $\mathscr G: V \to V'$, we obtain that $\mathscr G$ is $m$-monotone and coercive for appropriately chosen $\eta(U)$, and hence a corollary to Minty's theorem \cite[Proposition 1.2.5]{springer} ensures that $\mathscr G$  surjective. Elliptic regularity for $A$ and $\mathscr A$ then provide that $y=(\phi_1,\phi_2; u_1,u_2)^T \in \mathscr D(T_{\alpha})$ (with appropriate estimates), giving the solution to \eqref{rangecondition}.

With generation accomplished on $Y_{\alpha}$ taken with the topology induced by $((\cdot,\cdot))$, we obtain immediate semigroup generation on $Y_{\alpha}$ in the natural norm induced by \eqref{inner1}, and again, via topological equivalence, semigroup generation on $Y_s$.
\end{proof}

\subsection{Locally Lipschitz Perturbation on Finite Time Intervals}
Consider the perturbation operator $\mathscr F: Y_{s} \to Y_{s}$ given by $\mathscr F (y) = \big( 0,~ \mu \phi; ~0, ~M_{\alpha}^{-1}[p_0-f_v(u)]\big)^T$.
Then the abstract system \begin{equation*} \dfrac{dy}{dt}+T_{\alpha}y=\mathscr{F}(y), ~y(0)=y_0 \in 
	 Y_{s} \end{equation*} is equivalent to the main flow-plate system \eqref{flowplate}. As is shown in \cite{springer}, the sharp regularity of the Airy stress function provides the a local Lipschitz property for the von Karman nonlinearity:
\begin{equation}\label{airy-lip}
\| [u_1,v(u_1)]-  [u_2,v(u_2)]\|_{-\delta} \le
C\big(\|u_1\|_{2}^2+\|u\|_{2}\|_2^2\big)\| u_1-  u_2\|_{2-\delta},~~\delta \in [0,2).
\end{equation}
Hence $\mathscr F: Y_{s} \to Y_{s}$ is a locally Lipschitz perturbation. 

Applying the standard perturbation semigroup argument \cite{pazy}, we obtain:
\begin{lemma}\label{nonl-cor} With $T_{\alpha}$ and $\mathscr F$ as above, 
	the equation \begin{equation*}\label{abstractode} \dfrac{dy}{dt}+T_{\alpha}y=\mathscr{F}(y), ~y(0)=y_0 \in  \cD(T_{\alpha} ) 
	 \end{equation*}  has a unique local-in-time strong solution on $[0,t_{\text{max}})$.
	When $y_0 \in Y_{s}$, we have a unique local-in-time $C(0,t_{\text{max}}; Y_{s})$ mild solution.
	In both cases, when $t_{\text{max}}(y_0)< \infty$, we have that $||y(t)||_{Y_{s}}  \to \infty$ as $t \nearrow t_{\text{max}}(y_0)$.
\end{lemma}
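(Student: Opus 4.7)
The plan is to obtain the lemma as a standard consequence of Pazy's perturbation theory \cite{pazy}, applied to the $C_0$-semigroup $\{S_{\alpha}(t)\}_{t \ge 0}$ generated by $-T_{\alpha}$ on $Y_s$ (given by the previous theorem) together with the local Lipschitz estimate \eqref{airy-lip}. First I would recast \eqref{abstractode} in mild (Duhamel) form:
\begin{equation*}
y(t) = S_{\alpha}(t)y_0 + \int_0^t S_{\alpha}(t-s)\,\mathscr{F}(y(s))\,ds,
\end{equation*}
and verify that $\mathscr{F}: Y_s \to Y_s$ is Lipschitz on each ball $B_R(Y_s)$ with constant $L_R$ depending polynomially on $R$. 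The dangerous term is $M_{\alpha}^{-1}f_v(u)$: by \eqref{airy-lip} (with $\delta = 0$), $f_v(u_1)-f_v(u_2) \in L^2(\Omega)$ with norm $\lesssim (\|u_1\|_2^2+\|u_2\|_2^2+1)\|u_1-u_2\|_2$, after which $M_{\alpha}^{-1}$ maps into $\mathscr{D}(M_{\alpha}^{1/2}) = L_{\alpha}^2(\Omega)$ continuously. The perturbations $\mu \phi$ and $p_0$ are harmless.

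Next, on the complete metric space $X_{R,\tau} = \{ y \in C([0,\tau];Y_s) : \|y(t)\|_{Y_s} \le 2M\|y_0\|_{Y_s},\, y(0)=y_0\}$ with $M = \sup_{t \in [0,1]} \|S_{\alpha}(t)\|$, a standard Banach fixed point argument on the Duhamel map works provided $\tau = \tau(\|y_0\|_{Y_s})$ is chosen small enough depending on $L_R$ with $R = 2M\|y_0\|$. This yields a unique mild solution in $C([0,\tau]; Y_s)$, and the usual continuation procedure (restarting at the final time of each local solution) produces a maximal existence interval $[0,t_{\max})$ with the blow-up alternative: if $t_{\max} < \infty$, then one cannot have $\limsup_{t \nearrow t_{\max}}\|y(t)\|_{Y_s} < \infty$, since otherwise the local existence time would be uniformly bounded below along the trajectory and the solution could be extended past $t_{\max}$.

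To upgrade mild to strong solutions when $y_0 \in \mathscr{D}(T_{\alpha})$, I would use that $\mathscr{F}$ is in fact Fr\'echet $C^1$ on $Y_s$: the only nontrivial piece is $u \mapsto [u,v(u)+F_0]$, which is a (cubic) polynomial in $u$ composed with the bounded elliptic solver defining $v(u)$, so its derivative is readily computed and satisfies the same type of bound as \eqref{airy-lip}. Pazy's theorem \cite[Thm.~6.1.5]{pazy} then gives $y \in C([0,t_{\max});\mathscr{D}(T_{\alpha})) \cap C^1([0,t_{\max});Y_s)$ satisfying the abstract ODE pointwise. The main (and really only) obstacle I would anticipate is bookkeeping the Lipschitz estimate at the level of $Y_s$ (as opposed to some larger space), but this is automatic thanks to \eqref{airy-lip} together with the two-derivative gain of $M_{\alpha}^{-1}$.
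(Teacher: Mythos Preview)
Your proposal is correct and follows exactly the route the paper takes: the paper simply invokes the locally Lipschitz estimate \eqref{airy-lip} to conclude $\mathscr{F}:Y_s\to Y_s$ is locally Lipschitz and then cites the standard semigroup perturbation argument from \cite{pazy}, without further detail. Your write-up supplies the Duhamel/contraction and $C^1$-upgrade details that the paper omits, but the underlying strategy is identical.
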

Identifying the abstract ODE in \eqref{abstractode} with the flow-plate system in \eqref{flowplate}, we obtain a local-in-time existence and uniqueness result.
\begin{corollary}\label{solutions}
Consider the system in \eqref{flowplate} with $U \in [0,1)$, $\alpha>0$, and $k\ge 0$. Take $p_0 \in L^2(\Omega)$ and $F_0 \in {H^3(\Omega)}$. Then, for $y_0 = (\phi_0,\phi_1;u_0,u_1) \in \mathscr D(T_{\alpha})$ (resp. $Y_{s}$) there exists a unique, local-in-time strong (resp. generalized) solution $(\phi(t), \phi_t(t); u(t), u_t(t))$ as defined in Section \ref{energiessols}. 
\end{corollary}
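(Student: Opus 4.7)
The corollary is the concrete PDE restatement of the abstract Lemma~\ref{nonl-cor}. Since the local Lipschitz property of $\mathscr F:Y_s \to Y_s$ has just been established (via \eqref{airy-lip} together with the standing hypotheses $p_0 \in L^2(\Omega)$ and $F_0 \in H^3(\Omega)$, and the boundedness of $M_\alpha^{-1}$ from $L^2(\Omega)$ into $L_\alpha^2(\Omega)$), Lemma~\ref{nonl-cor} immediately supplies abstract local-in-time strong and mild solutions of $\tfrac{dy}{dt}+T_\alpha y = \mathscr F(y)$. The task of the corollary is then to verify that these abstract solutions coincide with the strong and generalized solutions of \eqref{flowplate} in the sense of Section~\ref{energiessols}.

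For this identification, I would unpack $\tfrac{dy}{dt}+T_\alpha y = \mathscr F(y)$ componentwise. The first and third rows of $T_\alpha$ enforce $\phi_2 = \phi_t$ and $u_2 = u_t$. The second row, combined with the perturbation $\mu\phi_1$ from $\mathscr F$ and the interior vanishing $AN_0w = 0$ on $\mathbb R^3_+$ (read off from \eqref{Neumannmap}), reduces inside $\mathbb R^3_+$ to the flow equation $(\partial_t + U\partial_{x_1})^2\phi = \Delta\phi$. The domain inclusion $\phi_1 + N_0(u_2 + U\partial_{x_1}u_1) \in \mathscr D(A)$ in $\mathscr D(T_\alpha)$, together with the Neumann characterization \eqref{Neumannmap}, encodes the inhomogeneous boundary condition $\partial_{x_3}\phi|_{x_3=0} = [u_t + U\partial_{x_1}u]_{\text{ext}}$. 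The fourth row, after multiplying through by $M_\alpha$ and using the trace identity $N_0^*A(\cdot) = \gamma[\cdot]$ on $\Omega$, produces the plate equation of \eqref{flowplate}; the clamped boundary conditions $u = \partial_\nu u = 0$ are built into $\mathscr D(\mathscr A^{1/2}) = H_0^2(\Omega)$.

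The time-regularity requirements in the definition of a strong solution (the $L^1$ bounds on $(\phi_t,u_t)$ and $(\phi_{tt},u_{tt})$ in the stronger topologies, together with the a.e. inclusion $\Delta^2 u + k(1-\Delta)u_t \in H^{-1}(\Omega)$) all follow by reading $y \in C^0([0,t_{\max});\mathscr D(T_\alpha)) \cap C^1([0,t_{\max});Y_s)$ row by row, using $\tfrac{dy}{dt} = -T_\alpha y + \mathscr F(y)$ for the second-derivative bound. Generalized solutions in the sense of Section~\ref{energiessols} are, by their very definition, $C([0,T];Y_s)$ limits of strong solutions, which is exactly the density characterization of the mild semigroup solutions produced by Lemma~\ref{nonl-cor} for initial data $y_0 \in Y_s$.

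The main obstacle---such as it is---is the bookkeeping required to read the inhomogeneous Neumann trace off the domain inclusion for $A$ and to cancel the auxiliary $\mu\phi$ terms between the generator and the perturbation; the substantive analytic work (locally Lipschitz $\mathscr F$, $\omega$-accretivity, and maximality) is already in hand by the time one reaches the corollary. Consequently the corollary is essentially the statement that the abstract semigroup framework of Section~2 is a faithful formulation of \eqref{flowplate}, and the blow-up alternative $\|y(t)\|_{Y_s}\to\infty$ as $t\nearrow t_{\max}$ transfers directly from Lemma~\ref{nonl-cor} to set up the globalization argument in the next subsection.
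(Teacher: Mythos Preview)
Your proposal is correct and follows exactly the approach the paper takes: the paper simply states that ``Identifying the abstract ODE in \eqref{abstractode} with the flow-plate system in \eqref{flowplate}, we obtain a local-in-time existence and uniqueness result,'' and your write-up carries out that identification explicitly. If anything, you supply more detail than the paper does---the componentwise unpacking of $T_\alpha$, the recovery of the Neumann boundary condition from the domain inclusion, and the cancellation of the auxiliary $\mu\phi$ term are all left implicit in the paper.
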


\begin{remark} Energy methods and the direct estimate
\begin{equation}\label{oneusing*}||\phi(t)||_{L^2(\realsthree_+)} \le ||\phi_0||_{L^2(\realsthree_+)}+\int_0^t||\phi_t(\tau)||_{L^2(\realsthree_+)} d\tau,\end{equation} yield that the solutions in Corollary \ref{solutions} are valid for $t \in [0,T]$ for any $T>0$.  This point will be superseded by the following section.\end{remark}

\subsection{Bounds in Energy Norm $Y$ and Global Solutions}
In this section we remark that in the norm of $Y$ as defined in \eqref{yspace}, solutions are global-in-time bounded. This allows us to extend our result in Corollary \ref{solutions} to be global in the sense of a solution for $t \in [0,\infty)$. Such extension permits the analysis of long-time behavior of solutions.

The bounds in the following proposition are critical to obtaining the global-in-time boundedness mentioned above. 
\begin{proposition}\label{littleest} First, for $\phi \in W_1(\mathbb R_+^3)$:
	\begin{equation}\label{tracebound}||r_{\Omega}\gamma[\phi]||_{L^2(\Omega)} \le C_{\Omega}||\nabla \phi||_{L^2(\realsthree_+)}.\end{equation}
Next, the interactive energy $E_{int}$, as defined \eqref{energies}, is controlled in the following way:
	\begin{equation}\label{intbound} \big|E_{int}(t)\big| \le \delta \|\nabla \phi(t)\|_{\realsthree_+}^2+C(U,\delta)\|u_{x_1}(t)\|_{\Omega}^2, ~~\delta>0. \end{equation}
	Lastly, the nonlinear potential energy provides  control of low frequencies: ~ for any  $\eta,\epsilon > 0 $ there exists $M_{\epsilon,\eta} $ such that
	\begin{equation}\label{l:epsilon}\|u\|^2_{2-\eta} \leq \epsilon [\|\Delta u\|^2  + ||\Delta v(u)||^2 ] + M_{\eta,\epsilon},~~\forall~u \in (H^2\cap H_0^1)(\Omega).\end{equation}
\end{proposition}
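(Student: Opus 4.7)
The proposition collects three independent estimates that I address in turn.

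For the trace bound \eqref{tracebound}, the obstacle is that $W_1(\mathbb R^3_+)$ affords only gradient control. My plan is to localize to a bounded cylindrical neighborhood $D := \Omega \times (0,1)$ of $\Omega$ and upgrade to $H^1(D)$. Since $W_1(\mathbb R^3_+) \hookrightarrow L^6(\mathbb R^3_+)$ by the Sobolev embedding in three dimensions, Hölder's inequality on the bounded set $D$ yields $\|\phi\|_{L^2(D)} \le |D|^{1/3}\|\phi\|_{L^6(D)} \le C_D \|\nabla \phi\|_{L^2(\mathbb R^3_+)}$; the half-space Hardy inequality gives the same control with no appeal to Sobolev conjugates. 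Consequently $\phi|_D \in H^1(D)$ with norm bounded by $\|\nabla \phi\|_{L^2(\mathbb R^3_+)}$, and the standard trace theorem $H^1(D) \hookrightarrow L^2(\Omega)$ closes the argument.

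Estimate \eqref{intbound} is a direct consequence of \eqref{tracebound}: Cauchy-Schwarz on $E_{int}(t) = 2U \langle r_\Omega \gamma[\phi(t)], \partial_{x_1} u(t)\rangle$, combined with \eqref{tracebound} and Young's inequality with splitting parameter $\delta$, produces the claim with $C(U,\delta) = (U C_\Omega)^2/\delta$.

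For \eqref{l:epsilon}, my plan is a compactness-contradiction argument that exploits the compact embedding $H^2(\Omega) \hookrightarrow \hookrightarrow H^{2-\eta}(\Omega)$ together with the $4$-homogeneity $\|\Delta v(\lambda u)\|^2 = \lambda^4 \|\Delta v(u)\|^2$ of the Airy solver. Suppose the inequality fails for some fixed $\epsilon_0, \eta_0 > 0$; extract a sequence $\{u_n\} \subset (H^2\cap H_0^1)(\Omega)$ with $\|u_n\|^2_{2-\eta_0} > \epsilon_0(\|\Delta u_n\|^2 + \|\Delta v(u_n)\|^2) + n$. Setting $\alpha_n := \|u_n\|_{2-\eta_0}$, $w_n := u_n/\alpha_n$, and dividing through by $\alpha_n^2$, one forces $\alpha_n \to \infty$, $\|\Delta w_n\|^2 \le 1/\epsilon_0$, and $\|\Delta v(w_n)\|^2 \le (\epsilon_0 \alpha_n^2)^{-1} \to 0$. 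Compactness yields a subsequence with $w_n \to w$ strongly in $H^{2-\eta_0}(\Omega)$, with $\|w\|_{2-\eta_0} = 1$, while the Lipschitz estimate \eqref{airy-lip} combined with elliptic regularity for $\Delta^2 v(w_n) = -[w_n,w_n]$ passes to the limit to give $\|\Delta v(w)\| = 0$, equivalently $[w,w] \equiv 0$.

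The hardest step is the closing rigidity: concluding $w = 0$ from the Monge-Ampère-type identity $[w,w] \equiv 0$ (this is sharp in the clamped setting $H_0^2$, which covers the case of interest for the flow-plate system). Granting this rigidity, as recorded in \cite[Chapter 1]{springer} and references therein, we obtain the desired contradiction with $\|w\|_{2-\eta_0} = 1$ and the proof is complete.
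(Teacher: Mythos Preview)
Your proposal is correct and mirrors the paper's own approach: the paper likewise invokes the Hardy inequality for \eqref{tracebound}, derives \eqref{intbound} from it via Young's inequality, and attributes \eqref{l:epsilon} to a compactness--uniqueness argument exploiting the superlinearity of $f_v$, citing \cite{springer,webster} for details. Two minor remarks: the estimate \eqref{airy-lip} you cite concerns the full nonlinearity $[u,v(u)]$ rather than the Airy map $u\mapsto v(u)$ itself, so the passage to the limit $\|\Delta v(w)\|=0$ is more cleanly justified by the continuity of the Airy solver on $H^2$ (also in \cite[Chapter~1]{springer}); and the rigidity $[w,w]=0\Rightarrow w=0$ in fact holds on all of $(H^2\cap H_0^1)(\Omega)$, not just $H_0^2$, so no restriction to the clamped case is needed.
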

\noindent See \cite{springer,webster} for detailed proofs of the above facts; here, we suffice to say that \eqref{tracebound} follows from the Hardy inequality, and from \eqref{tracebound} the estimate \eqref{intbound} follows via Young's; \eqref{l:epsilon} is obtained through a compactness uniqueness argument that exploits superlinearity of $f_v$.

Now, let us define the positive part of the energy $\mathcal E$ as:
\begin{equation}\label{posenergy}
 \mathcal E_*(t)=\dfrac{1}{2}\big[||u_t||_{L_{\alpha}^2 (\Omega)}^2+ ||\Delta u ||^2+\frac{1}{2}||\Delta v(u)||^2+||\phi_t||^2+||\nabla \phi||^2\big]\end{equation}
 Then, via Proposition \ref{littleest}, we obtain control of the unsigned energy by the positive part:
 \begin{lemma}\label{energybound} For generalized solutions to \eqref{flowplate}, there exist positive constants $c,C,$ and $M$ that are positive, and do not depend on the individual trajectory, such that:
	\begin{equation} \label{estim}
	c \mathcal E_*(t)-M_{p_0,F_0} \le \cE(t) \le C \mathcal E_*(t)+M_{p_0,F_0},
	\end{equation} \end{lemma}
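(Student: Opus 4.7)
The plan is to bound $|\cE(t)-\cE_*(t)|$ by $\theta\,\cE_*(t) + M_{p_0,F_0}$ for some $\theta\in(0,1)$ depending only on $U$, $\|p_0\|$, and $\|F_0\|_{H^3}$; the triangle inequality then gives both bounds with $C=1+\theta$ and $c=1-\theta$. A direct comparison of \eqref{energies} and \eqref{posenergy} yields the algebraic identity
\begin{equation*}
\cE(t) - \cE_*(t) \;=\; E_{int}(t) \;-\; \tfrac{1}{2}U^2\|\partial_{x_1}\phi\|^2 \;-\; \langle F_0,[u,u]\rangle \;+\; \langle p_0, u\rangle,
\end{equation*}
so the task reduces to controlling each of these four indefinite contributions.

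First I would handle the pair of flow contributions together: \eqref{intbound} with $\delta>0$ (to be chosen small) combined with the trivial bound $\|\partial_{x_1}\phi\|^2 \le \|\nabla\phi\|^2$ gives
\begin{equation*}
\big|E_{int}(t) - \tfrac{1}{2}U^2\|\partial_{x_1}\phi\|^2\big| \;\le\; \big(\tfrac{1}{2}U^2 + \delta\big)\|\nabla\phi\|^2 \;+\; C(U,\delta)\|u_{x_1}\|^2.
\end{equation*}
Because $U\in[0,1)$, I can choose $\delta$ so that $U^2 + 2\delta < 1$, ensuring the $\|\nabla\phi\|^2$ part is strictly dominated by the $\tfrac{1}{2}\|\nabla\phi\|^2$ term in $\cE_*$.

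Next I would show the two plate-forcing contributions are essentially lower-order, to be absorbed via \eqref{l:epsilon}. Cauchy--Schwarz gives $|\langle p_0, u\rangle|\le \|p_0\|\,\|u\| \le \epsilon\|u\|_{2-\eta}^2 + C_\epsilon\|p_0\|^2$. For the pre-stress term, using the symmetry of the von Karman bracket and the sharp multiplier bound underlying \eqref{airy-lip}, one has
\begin{equation*}
|\langle F_0,[u,u]\rangle| \;=\; |\langle [u,F_0], u\rangle| \;\le\; C\|F_0\|_{H^3}\|u\|_{2-\eta}^2.
\end{equation*}
The leftover term $C(U,\delta)\|u_{x_1}\|^2$ from the flow step is also of the form $C\|u\|_{2-\eta}^2$. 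Applying \eqref{l:epsilon} once with $\epsilon$ taken small compared to $C\|F_0\|_{H^3}$, $C_\epsilon\|p_0\|$, and $C(U,\delta)$ absorbs all three into $\theta'[\|\Delta u\|^2 + \|\Delta v(u)\|^2]$ plus a constant $M_{p_0,F_0}$, with $\theta'<1/2$. Setting $\theta := \max(U^2+2\delta,\,2\theta')<1$ yields $|\cE(t)-\cE_*(t)|\le \theta\,\cE_*(t)+M_{p_0,F_0}$, completing the argument.

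The only substantive obstruction is the subsonic constraint: the margin $\tfrac{1}{2}(1-U^2)\|\nabla\phi\|^2$ is precisely what allows the non-definite interactive energy $E_{int}$ to be absorbed into the flow portion of $\cE_*$, and the argument degenerates as $U\to 1$. Everything else is a straightforward juggling of \eqref{intbound}, \eqref{l:epsilon}, and the bracket/multiplier estimate, all of which are trajectory-independent, so the resulting constants $c, C, M$ depend only on $U$, $\|p_0\|$, and $\|F_0\|_{H^3}$ as required.
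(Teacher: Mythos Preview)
Your proposal is correct and follows exactly the route the paper indicates: the paper does not give a detailed proof but simply states that the lemma follows ``via Proposition~\ref{littleest},'' and your argument is precisely the natural unpacking of that remark---use \eqref{intbound} together with $U<1$ to absorb the indefinite flow terms into $\tfrac12\|\nabla\phi\|^2$, reduce the $p_0$, $F_0$, and residual $\|u_{x_1}\|^2$ contributions to a multiple of $\|u\|_{2-\eta}^2$, and then invoke \eqref{l:epsilon} to absorb those into $\Pi_*(u)$ plus a constant. The only step that deserves a brief justification is the bound $|\langle F_0,[u,u]\rangle|\le C\|F_0\|_{H^3}\|u\|_{2-\eta}^2$: it does hold (integrate by parts twice using $u\in H_0^2(\Omega)$ to move all second derivatives onto $F_0$, leaving only $\nabla u\otimes\nabla u$ against $\nabla^2 F_0$), but it is not literally the estimate \eqref{airy-lip}, so you might cite the bracket calculus from \cite{springer} directly rather than \eqref{airy-lip}.
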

The next lemma is the energy identity, as defined through \eqref{energies}:
\begin{lemma}\label{energyident}
Weak (and hence generalized and strong) solutions to \eqref{flowplate} satisfy the energy identity
$$ \mathcal E(t) + k\int_0^t ||u_t||_{L^2_{\alpha}(\Omega)}^2 d\tau = \mathcal E(0).$$
\end{lemma}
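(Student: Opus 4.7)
The plan is to first establish the pointwise dissipation identity $\frac{d}{dt}\mathcal{E}(t) = -k\|u_t(t)\|^2_{L^2_{\alpha}(\Omega)}$ for strong solutions via the standard energy-multiplier approach, integrate in time, and then pass to generalized (and hence weak) solutions by a density argument. For strong solutions I would take the $L^2(\Omega)$ pairing of the plate equation with $u_t$ and the $L^2(\mathbb{R}^3_+)$ pairing of the flow equation with $\phi_t$, then verify that the boundary and coupling residues assemble exactly into $\frac{d}{dt}E_{int}$.

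On the plate side, the inertia and bending terms, after integration by parts against the clamped conditions, produce $\frac{1}{2}\frac{d}{dt}[\|u_t\|^2_{L^2_\alpha(\Omega)} + \|\Delta u\|^2]$, the structural damping yields $k\|u_t\|^2_{L^2_\alpha(\Omega)}$, and $\langle p_0,u_t\rangle$ is a perfect time derivative. The von Karman term $\langle f_v(u), u_t\rangle$ is reshaped into the time derivative of the nonlinear portion of $E_{pl}$ via $\langle[u,v(u)],u_t\rangle = -\frac{1}{4}\frac{d}{dt}\|\Delta v(u)\|^2$, obtained by differentiating $\Delta^2 v(u)=-[u,u]$ in $t$ and invoking the three-way bracket symmetry $\langle[f,g],h\rangle=\langle[h,g],f\rangle$ valid under clamped data, with an analogous treatment of the $F_0$ contribution. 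The unmatched right-hand side is the coupling trace $\langle r_\Omega\gamma[\phi_t+U\phi_{x_1}],u_t\rangle$. On the flow side, expanding $(\partial_t+U\partial_{x_1})^2\phi$, the mixed term $2U\int\phi_{x_1 t}\phi_t$ vanishes as a total $x_1$-derivative while $U^2\int\phi_{x_1 x_1}\phi_t$ integrates by parts to $-\frac{U^2}{2}\frac{d}{dt}\|\phi_{x_1}\|^2$; Green's identity on $\int\Delta\phi\cdot\phi_t$ yields $-\frac{1}{2}\frac{d}{dt}\|\nabla\phi\|^2$ together with a trace at $\{x_3=0\}$ which, via $\partial_{x_3}\phi=[u_t+Uu_{x_1}]_{\text{ext}}$, becomes $-\langle u_t+Uu_{x_1},\gamma[\phi_t]\rangle_\Omega$.

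Summing the two identities, the $\langle\gamma[\phi_t],u_t\rangle$ contributions cancel outright, leaving only a $U$-scaled residue in the remaining trace pairings; a single integration by parts in $x_1$ over $\Omega$, permissible because both $u$ and $u_t$ vanish on $\partial\Omega$, reassembles this residue into $-\frac{d}{dt}E_{int}$, producing the pointwise dissipation equality for strong solutions, and integration in $t$ gives the stated identity. To extend to generalized solutions I would pass to the limit in the integrated identity along the defining sequence $(\phi^n,u^n)\to(\phi,u)$ in $C([0,T];Y_s)$: the quadratic plate and flow terms pass by continuity of the $Y_s$ norm, $E_{int}$ by the trace bound \eqref{tracebound}, the Airy contribution by the local Lipschitz estimate \eqref{airy-lip}, and the dissipation integral by the uniform $C([0,T];L^2_\alpha(\Omega))$ control of $u_t^n$. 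Since weak solutions coincide with generalized solutions (as cited from \cite{springer}), the identity extends to weak solutions.

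The main obstacle is the meticulous bookkeeping of the coupling and boundary traces: one must track integration-by-parts contributions both at $\{x_3=0\}$ in the halfspace (handled via the Neumann coupling) and on $\partial\Omega$ (handled via clamped data), verify the cancellation of $\langle\gamma[\phi_t],u_t\rangle$ between the plate and flow multipliers, and confirm that the leftover $U$-dependent trace expressions reassemble precisely as $-\frac{d}{dt}E_{int}$ with the correct sign and prefactor. A secondary technicality is that weak solutions do not have enough temporal regularity to be directly tested against $u_t$ or $\phi_t$ in the variational formulation, so the identity cannot be derived at the weak level by a straightforward test-function argument and must be routed through strong approximations rather than, for instance, a Steklov-averaging scheme.
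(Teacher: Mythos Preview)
The paper states this lemma without proof, so there is no authorial argument to compare against. Your proposal is the standard and correct route: pair the plate equation with $u_t$ and the flow equation with $\phi_t$ at the strong level, identify the nonlinear potential via the bracket symmetry and the Airy relation, collect the $\{x_3=0\}$ trace contributions through the Neumann coupling, and observe that after one $x_1$-integration by parts (legitimate since $u,u_t$ vanish on $\partial\Omega$) the surviving $U$-terms constitute exactly $-\tfrac{d}{dt}E_{int}$; then pass to generalized and weak solutions by density using \eqref{tracebound} and \eqref{airy-lip}. One small caution: double-check the coefficient in the interaction residue against the paper's definition $E_{int}=2U\langle\gamma[\phi],\partial_{x_1}u\rangle$ so that the bookkeeping closes with the stated prefactor, but this is a matter of arithmetic, not method.
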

Synthesizing all of the above lemmata, we obtain:
\begin{lemma}\label{globalbound} Any weak (and hence generalized or strong) solution to \eqref{flowplate} will satisfy the bound \begin{equation}\label{apr}
	\sup_{t \ge 0} \left\{\|u_t\|_{L^2_{\alpha}(\Omega)}^2+\|\Delta u\|_{\Omega}^2+\|\phi_t\|_{\realsthree_+}^2+\|\nabla \phi\|_{\realsthree_+}^2 \right\}  \leq C\big(\|y_0\|_Y\big)< + \infty. \end{equation} Thus, solutions are (Lyapunov) stable in time in the norm $Y$.
\end{lemma}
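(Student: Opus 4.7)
The plan is to chain together the three preceding lemmata---the energy identity (Lemma \ref{energyident}), the sandwich bound $c\mathcal{E}_*(t) - M \le \mathcal{E}(t) \le C\mathcal{E}_*(t) + M$ (Lemma \ref{energybound}), and the Airy regularity in Proposition \ref{littleest}---into a closed, time-independent estimate on the four quantities appearing in \eqref{apr}. Because the a priori estimate is supposed to hold for any weak solution on its interval of existence, this bound in tandem with the blow-up alternative in Lemma \ref{nonl-cor} will also promote the local solutions of Corollary \ref{solutions} to global ones (noting that $Y$ uses the homogeneous flow norm $\|\nabla\phi\|$, so no additional $L^2(\mathbb{R}^3_+)$ control of $\phi$ is needed beyond what \eqref{oneusing*} already provides on finite windows).

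First I would invoke Lemma \ref{energyident}: since $k\ge 0$, the dissipation integral $k\int_0^t\|u_t\|_{L^2_\alpha}^2\,d\tau$ is non-negative, so $\mathcal{E}(t)\le\mathcal{E}(0)$ on the existence interval. Next, applying Lemma \ref{energybound} at time $t$ from below and at time $0$ from above yields
\begin{equation*}
c\,\mathcal{E}_*(t) \;\le\; \mathcal{E}(t) + M_{p_0,F_0} \;\le\; \mathcal{E}(0) + M_{p_0,F_0} \;\le\; C\,\mathcal{E}_*(0) + 2M_{p_0,F_0}.
\end{equation*}
Since the additional summand $\tfrac{1}{2}\|\Delta v(u)\|^2$ inside $\mathcal{E}_*$ (see \eqref{posenergy}) is non-negative, the left-hand side dominates the four terms displayed in \eqref{apr}; this produces the bound uniformly in $t\ge 0$.

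It remains to bound $\mathcal{E}_*(0)$ by a function of $\|y_0\|_Y$. All terms in $\mathcal{E}_*(0)$ except $\|\Delta v(u_0)\|^2$ are directly parts of $\|y_0\|_Y^2$. For the Airy term, the sharp regularity of $v(\cdot)$ (the $\delta=0$ endpoint of the estimate \eqref{airy-lip}, combined with the 2D Sobolev embedding $H^2(\Omega)\hookrightarrow W^{1,4}(\Omega)$) gives $\|\Delta v(u_0)\|^2\le C\|u_0\|_2^4 \le C\|y_0\|_Y^4$. Hence $\mathcal{E}_*(0)\le C(\|y_0\|_Y)$ as a polynomial in $\|y_0\|_Y$, and feeding this into the chain above delivers \eqref{apr} with the time-independent constant $C(\|y_0\|_Y)$.

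The only real subtlety---and the main thing to keep an eye on---is that every constant in the sandwich of Lemma \ref{energybound} must be genuinely time-independent. This is guaranteed by the way that lemma is set up: the interactive term $E_{int}$ is absorbed by a small fraction of $\|\nabla\phi\|^2$ via \eqref{intbound} (which forces $U<1$ to leave a definite residue of $(1-U^2-\delta)\|\nabla\phi\|^2$), and the lower-order plate terms $\langle p_0,u\rangle$ and $\langle F_0,[u,u]\rangle$ are absorbed into a small fraction of $\|\Delta u\|^2+\|\Delta v(u)\|^2$ via the superlinear-in-$u$ estimate \eqref{l:epsilon}. Once those absorptions are performed inside Lemma \ref{energybound}, the derivation above is pure bookkeeping, and the Lyapunov stability assertion in $Y$ is an immediate consequence.
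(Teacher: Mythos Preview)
Your proposal is correct and matches the paper's own approach: the paper simply writes ``Synthesizing all of the above lemmata, we obtain'' Lemma~\ref{globalbound}, and your chain $\mathcal E(t)\le\mathcal E(0)$ from Lemma~\ref{energyident} together with the two-sided bound of Lemma~\ref{energybound} is exactly that synthesis. Your added remarks on bounding $\|\Delta v(u_0)\|$ via Airy regularity and on the time-independence of the constants in Lemma~\ref{energybound} (through \eqref{intbound} and \eqref{l:epsilon}) spell out what the paper leaves implicit.
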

An immediate corollary from the energy identity \eqref{energyident} and the above boundedness is the finiteness of the dissipation integral, which is used critically below.
\begin{corollary}\label{dissint} Any solution \eqref{flowplate} satisfying the energy identity with $k> 0$ has the property $$\int_0^{\infty} \|u_t(t)\|_{L_{\alpha}^2(\Omega)}^2 dt \leq  K(||y_0||_Y) < \infty.$$
\end{corollary}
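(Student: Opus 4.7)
The plan is to combine the energy identity from Lemma \ref{energyident} with the two-sided control of $\mathcal E$ by $\mathcal E_*$ from Lemma \ref{energybound}. Starting from
\begin{equation*}
k\int_0^t \|u_t(\tau)\|_{L^2_\alpha(\Omega)}^2\,d\tau \;=\; \mathcal E(0) - \mathcal E(t),
\end{equation*}
I want to bound the right-hand side uniformly in $t$ by a quantity depending only on $\|y_0\|_Y$ and the data $p_0, F_0$.

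For the lower bound on $\mathcal E(t)$, I invoke the left inequality in \eqref{estim}: $\mathcal E(t)\ge c\,\mathcal E_*(t) - M_{p_0,F_0} \ge -M_{p_0,F_0}$, where the last inequality uses that $\mathcal E_*(t)\ge 0$ by its definition \eqref{posenergy}. Consequently
\begin{equation*}
\mathcal E(0) - \mathcal E(t) \;\le\; \mathcal E(0) + M_{p_0,F_0}.
\end{equation*}
For the initial energy, the right inequality in \eqref{estim} gives $\mathcal E(0)\le C\,\mathcal E_*(0)+M_{p_0,F_0}$, and $\mathcal E_*(0)$ is controlled by the norm of the initial data $\|y_0\|_Y$ directly from \eqref{posenergy} (noting that $\tfrac12\|\Delta v(u_0)\|^2$ is dominated by a polynomial in $\|u_0\|_2$ via the Airy-function estimate \eqref{airy-lip}). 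Thus there is a constant $K=K(\|y_0\|_Y)$, independent of $t$, such that
\begin{equation*}
k\int_0^t \|u_t(\tau)\|_{L^2_\alpha(\Omega)}^2\,d\tau \;\le\; K(\|y_0\|_Y).
\end{equation*}

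Since the integrand is nonnegative and $k>0$, the monotone convergence theorem allows passage to $t\to\infty$, yielding the claimed bound after dividing by $k$. There is no genuine obstacle here; the only point requiring care is the sign bookkeeping, namely remembering that $\mathcal E$ is indefinite (the interactive term $E_{int}$ and the potential terms involving $p_0, F_0$ have no fixed sign), so the a priori nonnegativity of the dissipation integral must be recovered by sandwiching $\mathcal E$ between positive multiples of $\mathcal E_*$ up to a fixed constant.
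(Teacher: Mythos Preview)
Your proof is correct and follows essentially the same route as the paper, which simply states that the result is an immediate corollary of the energy identity in Lemma \ref{energyident} combined with the energy bounds in Lemma \ref{energybound} (and the resulting global boundedness in Lemma \ref{globalbound}). Your argument makes this precise by using the lower bound $\mathcal E(t)\ge -M_{p_0,F_0}$ and the upper bound $\mathcal E(0)\le C\mathcal E_*(0)+M_{p_0,F_0}$ exactly as intended.
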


\begin{remark} We note that global-in-time boundedness of solutions cannot be obtained without accounting for nonlinear effects. Also, we note that any generalized solution has the properties: $y(t) \in C([0,T] ; Y_s)$ and $y(t) \in C([0,\infty); Y)$. This is to say, on infinite time intervals we lose control of the quantity $||\phi(t)||_{0}.$\end{remark}

\section{Equilibria Set}\label{equilsec} Before moving on to discuss the long-time behavior of trajectories, it is worthwhile to discuss the equilibrium/stationary solutions. Thus, 
in this section, we consider the stationary points of the dynamics $(S_t,Y)$, i.e., stationary solutions for \eqref{flowplate} corresponding to:
\begin{equation}\label{static}
\begin{cases}
\Delta^2u+f_v(u)=p_0(\xb)+Ur_{\Omega}\gamma[\partial_{x_3} \phi]& \xb \in \Omega\\
u=\Dn u= 0 & \xb \in \Gamma\\
\Delta \phi -U^2 \partial_{x_1}^2\phi=0 & \xb \in \realsthree_+\\
\partial_{x_3} \phi = U[\partial_{x_1} u]_{\text{ext}}  & \xb \in \partial \realsthree_+
\end{cases}
\end{equation}
	Below, $W_2(\realsthree_{+})=\left\{\phi \in L_{loc}^2(\realsthree_+) : D^{\alpha}\phi\in L^2(\realsthree_{+}), ~|\alpha|=1,2 \right\}$, and a weak solution to \eqref{static} is defined as a pair $\left(u,\phi \right)\in H_0^2(\Omega)\times W_1(\realsthree_{+})$ such that 
\[
\langle \Delta {u}, \Delta w\rangle- \langle \left[ {u}, v({u}) + F_0 \right], w \rangle +  U \langle \gamma \left[ {\phi} \right], \partial_{x_1} w \rangle  = \langle p_0, w\rangle 
\]
and 
\[
(\nabla {\phi}, \nabla \psi)_{\mathbb{R}_{+}^3} - U^2 (\partial_{x_1} {\phi}, \partial_{x_1} \psi)_{\mathbb{R}_{+}^3} + U\langle \partial_{x_1} {u}, \gamma [\psi]\rangle_{\Omega} = 0.
\]

We have the following theorem for the stationary problem:
\begin{theorem}\label{statictheorem}
	Suppose $0 \le U <1$ with $p_0 \in L^2(\Omega)$ and $F_0 \in H^3(\Omega)$. Then a {\em weak} solution $\left(u(\xb),\phi(\xb)\right)$ to \eqref{static} exists and satisfies the additional regularity property $$(u,\phi) \in (H^4\cap H_0^2)(\Omega) \times W_2(\realsthree_+).$$ Such solutions correspond to the extremal points of the potential energy functional $$P(u,\phi) = \frac{1}{2}\|\Delta u\|_{\Omega}^2+\Pi(u)+\frac{1}{2}\|\nabla \phi\|_{\realsthree_+}^2-\dfrac{U^2}{2}\|\partial_{x_1}\phi\|_{\realsthree_+}^2 + U\langle \partial_{x_1} u, tr[\phi]\rangle_{\Omega},$$ considered for $(u,\phi) \in H_0^2(\Omega) \times W_1(\realsthree_+)$. \end{theorem}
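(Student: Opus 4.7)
The plan is to apply the direct method of the calculus of variations to a reduced plate-only functional, obtained by first eliminating the flow using the subsonic ellipticity. For each fixed $u \in H_0^2(\Omega)$ I solve the stationary flow subproblem for $\phi = \phi(u) \in W_1(\realsthree_+)$ via Lax--Milgram: the bilinear form $B(\phi,\psi) := (\nabla\phi,\nabla\psi)_{\realsthree_+} - U^2(\partial_{x_1}\phi,\partial_{x_1}\psi)_{\realsthree_+}$ satisfies $B(\phi,\phi) \ge (1-U^2)\|\nabla\phi\|_{\realsthree_+}^2$ and so is coercive on $W_1(\realsthree_+)$ precisely because $U \in [0,1)$. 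The source $\psi \mapsto -U\langle \partial_{x_1}u,\gamma[\psi]\rangle$ is bounded on $W_1(\realsthree_+)$ by the trace estimate \eqref{tracebound}. This defines a bounded linear solution operator $u \mapsto \phi(u)$ with $\|\nabla\phi(u)\|_{\realsthree_+} \le C\,U\|\partial_{x_1}u\|_{L^2(\Omega)}$.

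Next I introduce the reduced functional $\widetilde{P}(u) := P(u,\phi(u))$. Testing the flow equation against $\psi = \phi(u)$ yields $B(\phi(u),\phi(u)) = -U\langle \partial_{x_1}u,\gamma[\phi(u)]\rangle$, and substituting back gives $\widetilde{P}(u) = \tfrac{1}{2}\|\Delta u\|^2 + \Pi(u) - \tfrac{1}{2}B(\phi(u),\phi(u))$, where $\Pi$ bundles the remaining (von Karman / stress / pressure) plate-potential terms with $\Pi'(u)=f_v(u)-p_0$. Since $B(\phi(u),\phi(u)) \le C\|u\|_{H^1(\Omega)}^2$, standard interpolation combined with the superlinear-growth estimate \eqref{l:epsilon} absorbs this negative perturbation, the $F_0$-linearization term, and the $p_0$-term into the positive contributions $\tfrac{1}{2}\|\Delta u\|^2$ and $\tfrac{1}{4}\|\Delta v(u)\|^2$, producing the coercive lower bound $\widetilde{P}(u) \ge c(\|\Delta u\|^2 + \|\Delta v(u)\|^2) - M$.

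For weak lower semicontinuity, take $u_n \rightharpoonup u$ in $H_0^2(\Omega)$. Compactness $H_0^2 \hookrightarrow H^1$ forces $\partial_{x_1}u_n \to \partial_{x_1}u$ strongly in $L^2(\Omega)$; linearity and boundedness of the flow solver upgrade this to $\phi(u_n) \to \phi(u)$ strongly in $W_1(\realsthree_+)$, and \eqref{tracebound} then gives $\gamma[\phi(u_n)] \to \gamma[\phi(u)]$ in $L^2(\Omega)$, so the interaction term is weakly continuous. The von Karman potential is weakly l.s.c. by the classical compactness of the Airy map $u \mapsto v(u)$, and the biharmonic leading term is convex and continuous. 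The direct method then produces a minimizer $u_* \in H_0^2(\Omega)$; set $\phi_* := \phi(u_*)$. Because $\phi(u_*)$ is by construction the unique critical point of $P(u_*,\cdot)$, the envelope identity $\widetilde{P}\,'(u_*) = \partial_u P(u_*,\phi(u_*))$ yields the plate weak equation at $(u_*,\phi_*)$, while the flow weak equation holds by construction. Direct first-variation computations establish the converse, so weak solutions of \eqref{static} are precisely the extremal points of $P$.

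Regularity follows from a standard elliptic bootstrap. Since $\partial_{x_1}u \in H_0^1(\Omega)$, its zero-extension lies in $H^1(\mathbb R^2)$; Neumann regularity on the half-space for the anisotropic elliptic operator $(1-U^2)\partial_{x_1}^2 + \partial_{x_2}^2 + \partial_{x_3}^2$ gives $\gamma[\phi]\in H^1(\Omega)$, so the plate right-hand side $p_0 - f_v(u) + U\partial_{x_1}\gamma[\phi]$ lies in $L^2(\Omega)$ (using the sharp Airy regularity \eqref{airy-lip} for $f_v$), and clamped biharmonic regularity yields $u \in (H^4\cap H_0^2)(\Omega)$. Iterating once more---the Neumann data now lying in $H^{3/2}(\mathbb R^2)$---gives $\phi \in W_2(\realsthree_+)$. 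The main obstacle is the coercivity step: the subsonic hypothesis $U<1$ is indispensable both for coercivity of $B$ on $W_1(\realsthree_+)$ and to force the flow-generated perturbation of $\widetilde{P}$ to be genuinely lower-order in $u$, so that it can be dominated by the superlinear plate nonlinearity via \eqref{l:epsilon}; the rest of the argument is routine.
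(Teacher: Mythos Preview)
The paper does not actually supply a proof of this theorem: Section~\ref{equilsec} states the result and immediately moves on to discuss multiplicity and generic finiteness, implicitly deferring to the monograph \cite{springer} for the argument. So there is no ``paper's own proof'' to compare against here.

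Your proposal is a correct and standard variational argument. The main organizational choice you make---eliminating $\phi$ via Lax--Milgram to obtain a reduced plate-only functional $\widetilde P(u)=P(u,\phi(u))$ and then minimizing in $u$ alone---is equivalent to, but slightly cleaner than, direct minimization of $P$ on the product space $H_0^2(\Omega)\times W_1(\realsthree_+)$, which is the route taken in \cite{springer}. Your reduction has the advantage that coercivity and weak lower semicontinuity need only be checked on the compactly-embedded plate space, and the flow contribution $-\tfrac12 B(\phi(u),\phi(u))$ becomes a \emph{strongly continuous} lower-order perturbation rather than a sign-indefinite term that must be separately controlled. Both approaches rely on exactly the same ingredients you identify: subsonic coercivity of $B$, the Hardy/trace bound \eqref{tracebound}, the superlinear absorption estimate \eqref{l:epsilon}, and compactness of the Airy map. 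The envelope identity you invoke is legitimate here because $\phi\mapsto P(u,\phi)$ is smooth and strictly convex, so $\phi(u)$ depends differentiably on $u$ and the $\partial_\phi P$ contribution in the chain rule vanishes.

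One small point to tighten: in the regularity bootstrap you assert ``Neumann regularity on the half-space \ldots gives $\gamma[\phi]\in H^1(\Omega)$.'' Strictly, the Neumann datum $[U\partial_{x_1}u]_{\mathrm{ext}}$ lies only in $H^{1/2}(\mathbb R^2)$ when $u\in H_0^2(\Omega)$ (the zero extension of an $H_0^1$ function need not be globally $H^1$ unless $u$ vanishes to sufficient order at $\partial\Omega$, which clamped conditions do provide). You should make explicit that the clamped boundary $u=\partial_\nu u=0$ is what guarantees $[\partial_{x_1}u]_{\mathrm{ext}}\in H^1(\mathbb R^2)$, after which half-space elliptic regularity gives $\phi\in W_2$ and $\gamma[\phi]\in H^{3/2}$, and the biharmonic step goes through as you state.
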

	We denote by $\mathcal N$ the set of stationary solutions from Theorem \ref{statictheorem}.
In general, $\mathcal N$ has multiple elements.  The reference \cite{springer} provides an example (a choice of $p_0$ and $F_0$) where there are multiple stationary points:
	~let $p_0(x)\equiv 0$ and $F_0=-\beta\left(x_1^2+x_2^2\right).$ Then there exists $\beta>0$ such that for $\beta>\beta_0$ we have at least three solutions: $(0,0)$, $(u_\beta,\phi_\beta)$ and $(-u_\beta,-\phi_\beta)$---see also \cite{ciarlet}. 

 For given loads $F_0$ and $p_0$, the set of stationary solution is generically finite. This is to say that there is an open dense set $\mathcal R \subset L_2(\Omega)\times H^4(\Omega)$ such that  if $(p_0,F_0) \in \mathcal R$ then the corresponding set of stationary solutions $\mathcal N$ is finite. This follows from the Sard-Smale theorem, as shown in \cite[Theorem 1.5.7 and Remark 6.5.11]{springer}.

\section{Flow with Given Neumann Plate Data}
To perform the qualitative analysis below, it will be necessary to consider the flow equation with {\em prescribed} Neumann data. This will allow us to explicitly compute the relevant Neumann-to-Dirichlet type map in terms of the plate dynamics, among other quantities. 

Consider the problem:
\begin{equation}\label{floweq*}
\begin{cases}
(\partial_t+U\partial_{x_1})^2\phi=\Delta \phi & \text{ in }~\mathbb R_+^3 \times (t_0,T)\\
\partial_{x_3} \phi\big|_{x_3=0} = h(\xb,t) & \text{ in }~\mathbb R^2 \times (t_0,T)\\
\phi(t_0)=\phi_0;~~\phi_t(t_0)=\phi_1 & \text{ in }~\mathbb R_+^3
\end{cases}
\end{equation}
We have the following theorem from \cite{b-c-1,springer,miyatake1973}:
\begin{theorem}\label{flowpot}
Assume $U\ge 0$, $U\ne 1$; take $(\phi_0,\phi_1) \in H^1(\realsthree)\times L^2(\realsthree).$ If ~$h \in C\left([t_0,\infty);H^{1/2}(\mathbb R^2)\right)$ then \eqref{floweq*} is well-posed (in the weak sense) with 
$$\phi \in C\left([t_0,\infty);H^1(\realsthree_+)\right),~~\phi_t \in C\left( [t_0,\infty);L^2(\realsthree_+)\right).$$
\end{theorem}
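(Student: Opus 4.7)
The strategy is to remove the convective term $U\partial_{x_1}$ via a Galilean change of variables, reducing to the standard half-space wave equation with inhomogeneous Neumann data, and then invoke classical hyperbolic trace theory on a half-space (as in Miyatake, cited in the theorem) together with a lifting/regularization argument.

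For the reduction, I would introduce $\tilde\phi(y,t) := \phi(y_1+Ut,\,y_2,\,y_3,\,t)$. Differentiating in $t$ gives $\tilde\phi_t = \big[(\partial_t+U\partial_{x_1})\phi\big]\circ(y_1+Ut,y_2,y_3,t)$ and similarly $\tilde\phi_{tt}=\big[(\partial_t+U\partial_{x_1})^2\phi\big]\circ(\cdots)$, while spatial derivatives are unchanged. Thus $\tilde\phi$ solves the standard wave equation $\tilde\phi_{tt}=\Delta\tilde\phi$ on $\realsthree_+\times(t_0,T)$, with shifted Neumann data $\tilde h(y,t):=h(y_1+Ut,y_2,t)$ and initial data $\tilde\phi(t_0)=\phi_0(\cdot+Ut_0\mathbf e_1)$, $\tilde\phi_t(t_0)=(U\partial_{x_1}\phi_0+\phi_1)(\cdot+Ut_0\mathbf e_1)$. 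Since $x_1$-translation is an $H^s$-isometry for each $s$, norms are preserved; and the change of variables is invertible (replace $U$ by $-U$), so well-posedness for $\tilde\phi$ transfers directly to $\phi$. The hypothesis $U\neq 1$ does not appear explicitly here, but underlies noncharacteristicity of $\{t=t_0\}$ for the original operator.

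For the reduced problem, I would lift the Neumann data by the map $N_0$ specialized to $U=0$ (from Section 2), setting $\tilde\psi(t):=N_0(\tilde h(t))\in C([t_0,\infty);H^1(\realsthree_+))$ and writing $\tilde\phi=\tilde w+\tilde\psi$, so that $\tilde w$ solves a homogeneous-Neumann wave equation with forcing $-\tilde\psi_{tt}+\Delta\tilde\psi$. The homogeneous-Neumann wave generator on $H^1(\realsthree_+)\times L^2(\realsthree_+)$ is skew-adjoint (up to the $\mu$-perturbation), so a Lumer--Phillips argument parallel to that in Section 2 yields the requisite $C_0$-semigroup. Because $\tilde h$ is only continuous in $t$, I would first mollify in $t$ to obtain $\tilde h_\epsilon$ for which $\tilde\psi_\epsilon$ is smooth in time, solve the regularized problem by Duhamel, and then pass $\epsilon \to 0$. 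Uniqueness is obtained by the standard energy identity applied to the difference of two solutions with identical data.

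The main obstacle is the uniform estimate required to pass to the limit, namely the trace bound
\[
\|\tilde\phi_\epsilon\|_{C([t_0,T];H^1(\realsthree_+))}+\|\partial_t\tilde\phi_\epsilon\|_{C([t_0,T];L^2(\realsthree_+))} \lesssim \|\phi_0\|_1+\|\phi_1\|+\|\tilde h_\epsilon\|_{C([t_0,T];H^{1/2}(\realstwo))}.
\]
A naive energy estimate for the inhomogeneous-Neumann wave equation loses a derivative on the boundary datum, demanding $\tilde h\in C^1_t L^2_x$ or $C_t^0 H^1_x$ to close---strictly stronger than the stated hypothesis. The \emph{hidden regularity} theorem of Miyatake (1973) supplies exactly the required control for $H^{1/2}$ spatial data merely continuous in $t$; importing this sharp microlocal trace estimate is the analytic heart of the proof, whereas the Galilean reduction, the elliptic Neumann lift, and the semigroup generation for the flat wave operator are essentially bookkeeping.
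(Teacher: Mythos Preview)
The paper does not supply a proof of this theorem: immediately before the statement it writes ``We have the following theorem from \cite{b-c-1,springer,miyatake1973},'' and the only further comment is the remark that a sharper result with $h\in H^{1/3}((0,T)\times\mathbb R^2)$ is available via \cite{redbook,tataru}. So there is no in-paper argument to compare against; your sketch is in effect a reconstruction of what lies in those references.

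Your outline is sound and is, in fact, close to how the cited sources proceed. The Galilean substitution $\tilde\phi(y,t)=\phi(y_1+Ut,y_2,y_3,t)$ is exactly the device used in \cite{b-c-1,springer} to reduce to the pure wave equation on the half-space (and the later explicit Kirchhoff-type formula in Theorem~\ref{flowformula} is derived after the same change of variables). Your identification of the analytic crux---that a naive energy estimate on the inhomogeneous Neumann problem loses half a derivative on $h$, and that one must import a sharp hyperbolic trace/lifting estimate of Miyatake type to close at the $C_tH^{1/2}_x$ level---is correct and is precisely why \cite{miyatake1973} is cited.

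Two small corrections. First, your explanation of the hypothesis $U\ne 1$ is off: the hypersurface $\{t=t_0\}$ is spacelike for every $U$ (the $\partial_t^2$ coefficient is $1$), so noncharacteristicity of the initial surface is not the issue. The restriction $U\ne 1$ reflects degeneration of the spatial part $(1-U^2)\partial_{x_1}^2+\partial_{x_2}^2+\partial_{x_3}^2$ at Mach~1; after your Galilean change this disappears, which is consistent with the fact that the reduced problem is the standard wave equation regardless of $U$. Second, be careful in invoking Miyatake: his estimates are naturally stated for boundary data in anisotropic \emph{space--time} Sobolev classes, whereas the hypothesis here is $h\in C_t H^{1/2}_x$ with no time regularity. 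The passage from one to the other requires the mollification-and-limit step you describe together with a uniform bound that genuinely follows from the cited microlocal estimate, but you should state explicitly which form of the estimate you are using rather than treat it as a black box.
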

\begin{remark}
In fact, a stronger regularity result is available. Finite energy $H^1(\Omega) \times L^2(\Omega)$ solutions are obtained with $h\in H^{1/3}((0,T) \times \mathbb R^2)$ \cite{redbook,tataru}. 
\end{remark}

\subsection{Flow Decomposition and Properties}

We may decompose the flow problem from \eqref{floweq*} into two pieces corresponding to zero Neumann data, and zero initial data, respectively: ~$\phi^*$ solves \eqref{flowpot} with $h \equiv 0$, and $\phi^{**}$ solves \eqref{flowpot} with $\phi_0=\phi_1\equiv 0$.

In line with our well-posedness result, Corollary \ref{solutions}, we will consider:
\begin{equation}\label{h}
h(\xb, t) \equiv [u_t + U u_{x_1}]_{\text{ext}} \in C([0,T];H^1(\mathbb R^2)).
\end{equation}

\subsection{Point-wise Formulae}

In this section, we will look at $\phi^*$ and $\phi^{**}$ separately and establish results regarding each of them. These results will be used to obtain useful estimates in the next section.

 For the analysis of $\phi^*$ we use the tools developed in \cite{b-c,b-c-1}, namely, the Kirchhoff type representation for the solution  $\phi^*(\xb,t)$
in $\R_+^3$ (see, e.g., \cite[Theorem~6.6.12]{springer}). We  conclude that
if the initial data   $\phi_0$ and $\phi_1$ are  localized in the ball $K_{\rho} \equiv \mathbb R^3_+ \cap B_\rho(\mathbb R^3)$,
then by  finite dependence on the domain of the signal in 3-D  (Huygen's principle),
   one obtains for any $\tilde \rho$ that  $\phi^*(\xb,t)\equiv 0$ for all $\xb\in  K_{\tilde \rho} $
and $t\ge t_{\tilde\rho}$. 
Thus $\phi^*$ tends to zero in the sense of the local flow energy, i.e.,  \begin{equation}\label{starstable} \|\nabla \phi^*(t)\|_{L^2( K_{\tilde\rho} )}^2 + \|\phi^*_t(t)\|_{L^2( K_{\tilde \rho} )} \to 0, ~~ t \to \infty,\end{equation} for all fixed $ \tilde\rho>0$.
Also, in this case,
\begin{equation}\label{traceh}
\big(\partial_t+U\partial_{x_1}\big)\gamma[\phi^*]\equiv0,~~~\xb\in \Omega,~t\ge t_{\tilde \rho}.
\end{equation}

On the other hand, for $\phi^{**}$ as above, we have the following result: 
\begin{theorem}\label{flowformula}
	Let $$\ds h(\xb,t) =[u_t (x_1,x_2,t)+Uu_{x_1}(x_1,x_2,t)]_{\text{ext}},$$ there exists a time $t^*(\Omega,U)$ such that, for all $t>t^*$, we have the following representation for the weak solution:
	\begin{equation}\label{phidef}
	\phi^{**}(\xb,t) = {-}\dfrac{\chi(t-x_3) }{2\pi}\int_{x_3}^{t^*}\int_0^{2\pi}(u^{\dag}_t(\xb,t,s,\theta)+Uu^{\dag}_{x_1}(\xb,t,s,\theta))d\theta ds.
	\end{equation}
	where $\chi(s) $ is the Heaviside function. The time $t^*$ is given by:
	\begin{equation}\label{tstar}
	t^*=\inf \{ t~:~\xb(U,\theta, s) \notin \Omega \text{ for all } (x_1,x_2) \in \Omega, ~\theta \in [0,2\pi], \text{ and } s>t\},
	\end{equation} with ~\begin{equation}\label{xescape}\xb(U,\theta,s) = \left(x_1-(U+\sin \theta)s,x_2-s\cos\theta\right) \subset \realstwo\end{equation} (not to be confused with $\xb = (x_1,x_2)$).
	\end{theorem}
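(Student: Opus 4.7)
The plan is to derive the representation formula by Fourier--Laplace methods and then apply the bounded support of the Neumann datum $h$ combined with finite propagation speed. Since $\phi^{**}$ has zero initial data, the transform approach yields a clean explicit expression; the Huygens-type ``escape time'' $t^*$ then lets us truncate the temporal integration.

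First, I would apply the tangential Fourier transform in $(x_1,x_2)\mapsto \xi=(\xi_1,\xi_2)$, which converts the convected wave equation $(\partial_t+U\partial_{x_1})^2\phi=\Delta\phi$ into the family
\begin{equation*}
(\partial_t+iU\xi_1)^2\widehat{\phi} \;=\; \partial_{x_3}^2\widehat{\phi}-|\xi|^2\widehat{\phi},\qquad \partial_{x_3}\widehat{\phi}\big|_{x_3=0}=\widehat{h},
\end{equation*}
parametrized by $\xi$, with zero data at $t=0$. Next I would apply the Laplace transform in $t$ to obtain a constant-coefficient ODE in $x_3$ whose decaying solution at $x_3\to\infty$ is uniquely determined by the Neumann datum. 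Solving this ODE, inverting the Laplace transform (a standard contour computation producing a Heaviside factor that encodes the delay $x_3$ corresponding to the vertical travel time), and then inverting the tangential Fourier transform produces a convolution of $\widehat{h}$ with a fundamental kernel whose support lies on the forward characteristic cone for the convected wave operator.

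At this point, the remaining task is to recognize the inverse tangential transform of this kernel as a spherical-mean operator over the convected characteristics. Writing the mean in polar coordinates on the unit circle $(\theta\in [0,2\pi])$ and at radial parameter $s\ge x_3$, the convection $U\partial_{x_1}$ shifts the velocity vector from $(\sin\theta,\cos\theta)$ to $(U+\sin\theta,\cos\theta)$, which is precisely the point $\mathbf{x}(U,\theta,s)$ defined in \eqref{xescape}. After inserting $h=[u_t+Uu_{x_1}]_{\text{ext}}$ and denoting by $u^{\dag}(\mathbf{x},t,s,\theta)$ the evaluation of $u$ at $\mathbf{x}(U,\theta,s)$ at time $t-s$, this yields an integral formula over $s$ and $\theta$ with the overall prefactor $-\chi(t-x_3)/(2\pi)$.

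Finally, I would use the bounded support of $h$ (since $h$ is the zero-extension of a function supported on $\Omega$) together with the definition of $t^*$ in \eqref{tstar} to truncate the $s$-integration. For $t>t^*$, whenever $s>t^*$ the point $\mathbf{x}(U,\theta,s)$ lies outside $\Omega$ for every $(x_1,x_2)\in\Omega$ and every $\theta$, so $u^{\dag}$ and its derivatives vanish there; hence the $s$-integral collapses to $[x_3,t^*]$, producing exactly \eqref{phidef}. The main obstacle is the bookkeeping in Step 2--3: carefully performing the Laplace inversion in the convected variable to isolate the Heaviside factor $\chi(t-x_3)$ and identifying the resulting object as a convected spherical mean in the $(s,\theta)$ coordinates, rather than any genuinely new analytic difficulty---the existence of the explicit kernel follows from the half-space geometry together with the Kirchhoff-type representation already employed in \cite{b-c-1,springer}.
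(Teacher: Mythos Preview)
The paper does not prove this theorem; it is quoted as a known representation obtained via the Kirchhoff-type solution formula for the half-space wave equation (see the references \cite{b-c,b-c-1,springer} invoked just before and after the statement). In those references the derivation proceeds by reducing the convected operator $(\partial_t+U\partial_{x_1})^2-\Delta$ to the ordinary wave operator through a Galilean change of variables, applying the classical 3-D Kirchhoff representation (reflected across $x_3=0$ to handle the Neumann boundary), and then undoing the change of variables; the truncation to $s\in[x_3,t^*]$ then follows from the compact support of $h$ in $\Omega$. Your Fourier--Laplace route is a legitimate alternative and will produce the same kernel, but it is a genuinely different computation: you trade the geometric Kirchhoff picture for an explicit transform inversion, and the ``bookkeeping'' you flag (recognizing the inverse tangential transform as a spherical mean) is in fact the entire content of the argument, not a side issue.

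There is one concrete slip. You identify $u^{\dag}(\xb,t,s,\theta)$ with the evaluation of $u$ at $\xb(U,\theta,s)$, but the paper's $u^{\dag}$ (defined in \eqref{dag}) evaluates at $\big(x_1-Us+\sqrt{s^2-x_3^2}\sin\theta,\;x_2-\sqrt{s^2-x_3^2}\cos\theta\big)$, i.e., with circular radius $\sqrt{s^2-x_3^2}$, not $s$. The point $\xb(U,\theta,s)$ with radius $s$ arises only after setting $x_3=0$ (it is used to define $t^*$ and, later, the trace operator $q(u^t)$). In your Laplace inversion the factor encoding vertical propagation will produce exactly this $\sqrt{s^2-x_3^2}$ in the spherical-mean radius, so the formula you write down at the end of Step~3 should already carry it; as written, your description collapses the $x_3$-dependence prematurely. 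Relatedly, your truncation argument in Step~4 is phrased only for $(x_1,x_2)\in\Omega$, which is all that the downstream applications require, but you should say so explicitly since \eqref{phidef} is stated for general $\xb\in\mathbb R^3_+$.
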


We may use the elementary formula 
	\begin{equation}\label{partialt}
	\partial_th^{\dag}(\xb,t,s,\theta)=-\frac{d}{ds}h^{\dag}(\xb,t,s,\theta)-U\partial_{x_1}h^{\dag}(\xb,t,s,\theta)-\frac{s}{\sqrt{s^2-x_3^2}}[M_{\theta}h^{\dag}](\xb,t,s,\theta)
	\end{equation}
	where, $M_{\theta} = \sin\theta\partial_{x_1}+\cos \theta \partial_{x_2}$ and  \begin{equation}\label{dag} u^{\dag}(\xb, t,s,\theta)=[u]_{ext}\left(x_1-Us+\sqrt{s^2-x_3^2}\sin \theta,x_2-\sqrt{s^2-x_3^2}\cos\theta,t-s\right),\end{equation}
	in computing partials of $\phi^{**}$. For $\phi_t^{**}$ we have:
	\begin{align}\label{phitest}
	\begin{split}
	\phi^{**}_t(\xb,t)= & \frac{1}{2\pi}\Bigg\{\int_0^{2\pi}d\theta u_t^{\dag}(\xb,t,t^*,\theta)-\int_0^{2\pi}d\theta u_t^{\dag}(\xb,t,x_3,\theta)\\ 
	& +\int_{x_3}^{t^*}ds\frac{s}{\sqrt{s^2-x_3^2}}\int_0^{2\pi}d\theta [M_{\theta}u_t^{\dag}](\xb,t,s,\theta)\Bigg\}\\
	\end{split}
	\end{align}
Similarly, the spatial partials for $i=1,2$ are:
\begin{align} \label{partialder}
\phi^{**}_{x_i}(x,t) = ~\frac{1}{2\pi} \int_{x_3}^{t^*}\int_0^{2\pi} [\partial_t+U\partial_{x_1}]u_{x_i}^\dagger(x,t,s,\theta) d\theta ds
=&~\frac{1}{2\pi}\int_{x_3}^{t^*}\int_0^{2\pi} U\partial_{x_1}u_{x_i}^\dagger(x,t,s,\theta) d\theta ds \\
&+\frac{1}{2\pi}\int_{x_3}^{t^*}\int_0^{2\pi} \partial_tu_{x_i}^\dagger(x,t,s,\theta) d\theta ds\nonumber
\end{align}
Differentiation in $x_3$ is direct, and so with cancellation, it yields:
\begin{align} \partial_{x_3}\phi^{**}(x,t)&= (\partial_t+U\partial_{x_1})u(x_1-Ux_3,x_2,t-x_3)\\
&+\frac{1}{2\pi}\int_{x_3}^{t^*}\frac{x_3}{\sqrt{s^2-x_3^2}}\int_0^{2\pi}d\theta \big[(\partial_t+U\partial_{x_1})[M_\theta u]^\dagger\big](x,t,s,\theta) \end{align}

From these direct calculations, bounds on solutions can be obtained directly \cite[Lemma 8]{ryz} using interpolation:
\begin{lemma}\label{compact2}
For \eqref{floweq*}, taken with $h(\xb,t)=(u_t+Uu_x)_{\text{ext}}$, we have
\begin{align}
\|\nabla  \phi^{**}(t)\|^2_{\eta, K_{\rho} }&+\|\phi_t^{**}(t)\|^2_{\eta, K_{\rho} }\nonumber \\
\le  ~C(\rho)& \big\{\|u(\cdot)\|^2_{H^{s+\eta}(t-t^*,t;H_0^{2+\eta}(\Omega))}+\|u_t(\cdot)\|^2_{H^{s+\eta}(t-t^*,t;H_0^{1+\eta}(\Omega))} \big\}
\end{align} for $s,\eta \ge 0$, $0<s+\eta<1/2$ and $t>t^*(U,\Omega)$.
\end{lemma}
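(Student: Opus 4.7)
The plan is to estimate each term in the explicit pointwise representations \eqref{phitest} and \eqref{partialder}, along with the formula for $\partial_{x_3}\phi^{**}$, directly, establishing first the integer base case $\eta = s = 0$ and then promoting to fractional exponents via complex interpolation. Every summand in those formulas falls into one of two templates: (i) a $\theta$-integral of $u^\dagger$ at a distinguished value $s \in \{x_3, t^*\}$, or (ii) an $s$-integral over $(x_3, t^*)$ against the weight $s/\sqrt{s^2 - x_3^2}$ or the constant weight $1$. Since $u^\dagger(\xb, t, s, \theta)$ is nothing but $[u]_{\text{ext}}$ evaluated at retarded time $t-s$ and a translated spatial argument, the measure-preserving change of variables $(x_1, x_2) \mapsto \xb(U, \theta, s)$ from \eqref{xescape} converts $L^2(K_{\rho})$-norms of these templates into $L^2\big((t-t^*, t); L^2(\Omega)\big)$-norms of $u$ and $u_t$ (and their derivatives).

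For $\eta = s = 0$, I would apply this procedure term-by-term in \eqref{phitest}. The two boundary terms at $s = x_3$ and $s = t^*$, after integration in $x_3 \in (0, \rho)$ and the above change of variables, are bounded by $\|u_t\|^2_{L^2(t-t^*, t; L^2(\Omega))}$. For the singular-kernel term, I would apply Cauchy--Schwarz in $s$ exploiting that $\int_{x_3}^{t^*} s/\sqrt{s^2 - x_3^2}\,ds = \sqrt{(t^*)^2-x_3^2}$ is uniformly bounded for $x_3 \in (0, \rho)$; Fubini and the change of variables then leave $\|M_\theta u_t\|^2 \le \|\nabla u_t\|^2$ integrated in space-time, yielding the $\|u_t\|^2_{L^2(t-t^*,t;H_0^1(\Omega))}$ contribution. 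The formulas \eqref{partialder} for $\phi^{**}_{x_i}$ are handled identically, with one extra derivative landing on $u^\dagger$ and producing $\|u\|^2_{L^2(H_0^2)} + \|u_t\|^2_{L^2(H_0^1)}$. The most delicate piece is the direct (non-integral) term in $\partial_{x_3}\phi^{**}$, namely $(\partial_t + U\partial_{x_1})u(x_1 - Ux_3, x_2, t - x_3)$; since $x_3 \in (0, \rho)$, integrating in $x_3$ promotes this characteristic trace to a genuine space-time $L^2$-norm on $\Omega \times (t-\rho, t)$, controlled by $\|u_t\|^2_{L^2(L^2)} + \|u\|^2_{L^2(H^1)}$.

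To promote to $\eta = 1$, I would observe that spatial differentiation $\partial^\alpha_\xb$ with $|\alpha| = 1$ commutes with the $\theta$- and $s$-integrations in each pointwise formula, so the same argument runs with one additional derivative on $u$ and $u_t$, giving the $\|u\|^2_{L^2(H_0^3)} + \|u_t\|^2_{L^2(H_0^2)}$ estimate. Complex interpolation between $\eta = 0$ and $\eta = 1$ then supplies the intermediate case $\eta \in (0,1)$. For the time exponent, the pointwise formulas express $\phi^{**}_t$ and $\nabla \phi^{**}$ as time convolutions of $u$-data against kernels supported in $(0, t^*)$, so standard Bochner--Sobolev interpolation in $t$ transfers $H^{s+\eta}$-regularity on the right-hand side to the same $H^{s+\eta}$-regularity on the left-hand side.

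I expect the main obstacle to be the sharp threshold $s + \eta < 1/2$, which is forced by the direct trace-type term in $\partial_{x_3}\phi^{**}$: this is essentially the trace of $u_t + U u_{x_1}$ onto a characteristic hyperplane, and standard trace theory gains strictly less than $1/2$ of a derivative below the ambient Sobolev exponent. Crossing this threshold would require the microlocal hidden-regularity techniques of \cite{miyatake1973}, but by staying subcritical ($s + \eta < 1/2$) the statement sidesteps this difficulty, so a straightforward interpolation argument suffices.
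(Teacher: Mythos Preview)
Your approach matches the paper's: it does not give a self-contained proof but only the sentence ``From these direct calculations, bounds on solutions can be obtained directly \cite[Lemma~8]{ryz} using interpolation'' immediately preceding the lemma, and your term-by-term estimates on the explicit formulas \eqref{phitest}, \eqref{partialder}, and the $\partial_{x_3}\phi^{**}$ representation, followed by interpolation, are precisely that program spelled out. One small clarification: the time exponent does not ``transfer $H^{s+\eta}$-regularity to the left-hand side'' (which is pointwise in $t$); rather, $H^\eta$-regularity in the $x_3$-direction on the left requires time regularity of $u,u_t$ on the right, because the retarded argument $t-s$ (and in particular $t-x_3$ in the trace-type terms) couples $x_3$-derivatives to time derivatives of the data---this is also the mechanism behind the $s+\eta<1/2$ threshold you correctly flag.
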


From here, for smooth solutions, we can explicitly solve for the needed Dirichlet trace of the material derivative appearing on the RHS of  \eqref{flowplate} in the plate equation in terms of the Neumann data $h=[u_t+Uu_{x_1}]$.
	Considering the term $$r_{\Omega}\gamma\left[\left(\partial_t+U\partial_{x_1}\right) \phi\right]=r_{\Omega}\gamma\left[\left(\partial_t+U\partial_{x_1}\right) \phi^{**}\right]$$ for $t>t_{\rho}$ by \eqref{traceh}, where again $\rho$ corresponds to the $supp(\phi_0),~supp(\phi_1) \subset K_{\rho}$.  Using the above expressions for $\partial_t\phi^{**}$ \eqref{phitest} and $\partial_{x_1}\phi^{**}$ \eqref{partialder}, we obtain
	\begin{equation}\label{delayt}
	r_{\Omega}\left(\partial_t+U\partial_{x_1}\right)\gamma\left[\phi^{**}\right]=-(\partial_t+U\partial_{x_1})u-q(u^t),
	\end{equation}
	for $t\geq \max\{t^*,t_{\rho}\}$ ($t^*$ as defined above in \eqref{tstar}) with
	\begin{equation}\label{qyou}
	q(u^t)=\dfrac{1}{2\pi}\int_0^{t^*}ds\int_0^{2\pi}d\theta \left[M^2_{\theta}\left[u\right]_{ext}(\mathbf x(U,\theta,s) \right],
	\end{equation}
	and ~$\mathbf x(U,\theta,s)$ as in \eqref{xescape}.
	
	The notation above for $u^t$ indicates the entire set $\big\{u(t+s)~:~s \in (-t^*,0)\big\},$ where $t^*$ is the fixed delay time given in \eqref{tstar} depending only on $\Omega$ and $U$; this notation is used in considerations with dynamical systems with delay/memory \cite{springer,Chu92b,oldchueshov1}.

\section{Reduced Plate Dynamics}
\subsection{Plate Reduction Theorem} 
From all of the point-wise formulate of the previous section---including the calculation of the ``Neumann-to-material-derivative-trace in \eqref{qyou}---we obtain the theorem below by waiting a time $t^{\#}=\max\{t^*~,~t_{\rho}\}$.
\begin{theorem}\label{rewrite}
	Let $k \ge 0$,  and $(\phi_0,\phi_1; u_0,u_1)^T \in H_0^2(\Omega) \times L^2(\Omega) \times H^1(\realsthree_+) \times L^2(\realsthree_+)$. Assume that there exists an $\rho$ such that $\phi_0(\xb) = \phi_1(\xb)=0$ for outside $K_\rho$.  Then the there exists a time $t^{\#}(\rho,U,\Omega) > 0$ such that for all $t>t^{\#}$ the plate solution $u(t)$ to (\ref{flowplate}) satisfies the following equation (in a weak sense):
	\begin{equation}\label{reducedplate}
	M_{\alpha}u_{tt}+\Delta^2u+kM_{\alpha}u_t+f_v(u)=p_0-(\partial_t+U\partial_{x_1})u-q(u^t)
	\end{equation}
	with
	\begin{equation}\label{potential*}
	q(u^t)=\dfrac{1}{2\pi}\int_0^{t^*}ds\int_0^{2\pi}d\theta [M^2_{\theta} u_{\text{ext}}](x_1-(U+\sin \theta)s,x_2-s\cos \theta, t-s),
	\end{equation}
with $M_{\theta}$ and $t^*$ as in the previous section.
\end{theorem}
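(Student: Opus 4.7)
The plan is to substitute the explicit representation of the flow potential from Section 4 into the plate equation of \eqref{flowplate}, and identify the Dirichlet trace of the material derivative of $\phi$ with $-(\partial_t + U\partial_{x_1})u - q(u^t)$ for times beyond a fixed threshold $t^{\#}$. Writing $D = \partial_t + U\partial_{x_1}$ for brevity, the task reduces to showing the trace identity $r_\Omega \gamma[D\phi] = -Du - q(u^t)$ on $\Omega$ for $t > t^{\#}$, after which the reduced equation \eqref{reducedplate} is immediate.

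First I would decompose $\phi = \phi^* + \phi^{**}$ as in Section 4.1. By Huygens' principle applied to $\phi^*$, as recorded in \eqref{starstable}--\eqref{traceh}, compact support of the flow initial data in $K_\rho$ yields a finite time $t_\rho$ after which $D\gamma[\phi^*] \equiv 0$ on $\Omega$; hence for $t \ge t_\rho$ it suffices to compute $r_\Omega \gamma[D\phi^{**}]$. For $t \ge t^*$, Theorem \ref{flowformula} supplies the Kirchhoff-type representation \eqref{phidef}, and formulas \eqref{phitest}, \eqref{partialder} give the partials $\phi^{**}_t$ and $\phi^{**}_{x_1}$.

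Next I would evaluate these partials at $x_3 = 0$, where the coefficient $s/\sqrt{s^2 - x_3^2}$ collapses to unity. At $s = 0$ the boundary contributions collapse using $u^{\dag}(\mathbf x, t, 0, \theta)|_{x_3 = 0} = u(x_1, x_2, t)$, while at $s = t^*$ all boundary terms vanish by the very definition of $t^*$ in \eqref{tstar}, since $[u]_{\text{ext}}$ and its derivatives vanish at $\mathbf x(U, \theta, t^*) \notin \Omega$. Summing $\phi^{**}_t + U\phi^{**}_{x_1}$ on the trace and writing $\tilde u := u^{\dag}|_{x_3 = 0}$, one obtains
\begin{equation*}
r_\Omega \gamma[D\phi^{**}] = -Du + \frac{1}{2\pi} \int_0^{t^*}\!\int_0^{2\pi} M_\theta\bigl[\partial_t \tilde u + U \partial_{x_1} \tilde u\bigr]\, d\theta\, ds.
\end{equation*}
The crucial step is to invoke the characteristic identity \eqref{partialt} at $x_3 = 0$, namely $(\partial_t + U\partial_{x_1}) \tilde u = -\tfrac{d}{ds}\tilde u - M_\theta \tilde u$, and substitute into the integrand. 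Integration by parts in $s$ kills the upper endpoint (vanishing at $s = t^*$), and the lower endpoint yields $\int_0^{2\pi} M_\theta u\, d\theta = 0$ because $\int_0^{2\pi}\sin\theta\, d\theta = \int_0^{2\pi}\cos\theta\, d\theta = 0$; one is left with exactly $-\frac{1}{2\pi}\int_0^{t^*}\!\int_0^{2\pi} M_\theta^2 \tilde u\, d\theta\, ds = -q(u^t)$ as defined in \eqref{qyou}--\eqref{potential*}.

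The main obstacle is justifying this chain of manipulations at the regularity level permitted by Corollary \ref{solutions} and the flow regularity in Theorem \ref{flowpot}. I would first establish the trace identity for strong (smooth) solutions, where the pointwise formulas from Section 4.2 are classical and all exchanges of integration with differentiation are legal, and then extend to generalized and weak solutions by density, invoking the continuity estimates of Lemma \ref{compact2} to control the relevant flow traces in terms of plate norms. Setting $t^{\#} := \max\{t^*, t_\rho\}$ and substituting the trace identity into the plate equation of \eqref{flowplate} then produces the reduced memory system \eqref{reducedplate}.
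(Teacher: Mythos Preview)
Your proposal is correct and follows essentially the same route as the paper: decompose $\phi=\phi^*+\phi^{**}$, eliminate the $\phi^*$ contribution on $\Omega$ for $t\ge t_\rho$ via Huygens' principle (equation \eqref{traceh}), and then compute $r_\Omega\gamma[(\partial_t+U\partial_{x_1})\phi^{**}]$ from the Kirchhoff formula \eqref{phidef} using the identity \eqref{partialt} to arrive at \eqref{delayt}--\eqref{qyou}, setting $t^{\#}=\max\{t^*,t_\rho\}$. In fact the paper does not give a standalone proof of the theorem at all---it simply points back to the pointwise formulae of Section~4.2 and declares the result; your write-up supplies the intermediate steps (boundary evaluations at $s=0,t^*$, the vanishing of $\int_0^{2\pi}M_\theta u\,d\theta$, and the density argument from strong to generalized solutions) that the paper leaves implicit.
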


We then have the following direct estimates on the delay potential $q(u^t)$ \cite{b-c,b-c-1,springer,delay}:
\begin{proposition}\label{pr:q}
	Let $q(u^t)$ be given by (\ref{potential*}). Then \begin{equation}\label{qnegest}
	||q(u^t)||^2_{-1} \le ct^*\int_{t-t^*}^t||u(\tau)||^2_1d\tau
	\end{equation} for any $u \in L^2(t-t^*,t;H_0^1(\Omega))$.
	If $u \in L^2_{loc}(-t^*,+\infty ; (H^2\cap H_0^1)(\Omega))$ we also have
	\begin{equation}\label{qnegest3}
	||q(u^t)||^2 \le ct^*\int_{t-t^*}^t||u(\tau)||^2_2d\tau,~~~~~~
	\int_0^t ||q(u^\tau)||^2 d\tau \le c[t^*]^2\int_{-t^*}^t||u(\tau)||^2_2d\tau
	,~~~\forall t\ge0.
	\end{equation}
\end{proposition}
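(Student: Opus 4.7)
The plan is to exploit two features of the memory kernel in \eqref{potential*}: the shift $\mathbf x \mapsto \mathbf x(U,\theta,s)$ has unit Jacobian and is merely a translation in $(x_1,x_2)$; and, since $u \in H_0^2(\Omega)$, its extension by zero $u_{\text{ext}}$ lies in $H^2(\mathbb R^2)$ with equal norm (and likewise for $H^1_0$). With these in hand, all three estimates reduce to standard applications of Cauchy--Schwarz and Fubini, with constants uniform in $\theta$ thanks to the boundedness of $\sin\theta$ and $\cos\theta$ in the coefficients of $M_\theta$.

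For the $L^2$ bound in \eqref{qnegest3}, I would first apply Cauchy--Schwarz to the double integral over $(s,\theta)\in(0,t^*)\times(0,2\pi)$ to pull out a factor of order $2\pi t^*$, then square pointwise in $(x_1,x_2)$ and integrate over $\Omega$. Swapping the order of integration and performing, for each fixed $(s,\theta)$, the translation $y = \mathbf x(U,\theta,s)$, the inner spatial integral is majorized by $\|M_\theta^2 u_{\text{ext}}(\cdot,t-s)\|_{L^2(\mathbb R^2)}^2$, which in turn is controlled by $C\|u(\cdot,t-s)\|_{2}^2$. The second (time-integrated) inequality in \eqref{qnegest3} then follows by integrating this bound in $\tau$ from $0$ to $t$ and applying Fubini once more, using the observation that for each fixed $\sigma$ the set $\{\tau\in[0,t]:\sigma\in[\tau-t^*,\tau]\}$ has length at most $t^*$; this yields the extra factor of $t^*$.

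For the $H^{-1}$ estimate \eqref{qnegest}, I would pair $q(u^t)$ with a test function $\varphi\in H_0^1(\Omega)$ extended by zero to $\mathbb R^2$, and exploit the chain-rule identity
\[
[M_\theta^2 u_{\text{ext}}]\bigl(\mathbf x(U,\theta,s)\bigr) \;=\; [M_\theta]_x^{\,2}\bigl(u_{\text{ext}}(\mathbf x(U,\theta,s))\bigr),
\]
valid because the shift is constant in $(x_1,x_2)$. Integrating by parts once on $\mathbb R^2$ moves one copy of $M_\theta$ onto $\varphi$, and no boundary terms appear since both factors lie in $H^1(\mathbb R^2)$ after extension. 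Changing variables back and applying Cauchy--Schwarz in $s$ produces the required factor $(t^*)^{1/2}$, while the $\theta$-integral is bounded trivially by $2\pi$. The only step requiring genuine care is the bookkeeping of this integration by parts on the extended functions---precisely why the kernel is stated through $u_{\text{ext}}$ and the test function is extended by zero---but once one works in the full plane, no real obstacle remains.
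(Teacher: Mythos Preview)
Your proposal is correct and is precisely the standard argument. The paper itself does not supply a proof of this proposition; it is simply stated with citations to \cite{b-c,b-c-1,springer,delay}, in keeping with the paper's declared style of quoting supporting lemmata with references rather than reproving them. The approach you outline---unit-Jacobian translation, zero-extension of $H_0^s(\Omega)$ functions to $H^s(\mathbb R^2)$, Cauchy--Schwarz over $(s,\theta)$, and (for the $H^{-1}$ bound) one integration by parts on $\mathbb R^2$ to transfer a copy of $M_\theta$ onto the extended test function---is exactly the route taken in those references. One small remark: you invoke $u\in H_0^2(\Omega)$ to get $u_{\mathrm{ext}}\in H^2(\mathbb R^2)$, whereas the proposition is phrased for $u\in H^2\cap H_0^1$; for the $L^2$ estimate this is harmless if one reads the kernel as $[M_\theta^2 u]_{\mathrm{ext}}$ (extend after differentiating), and in any case the plate solutions in this paper are clamped, so $u(t)\in H_0^2(\Omega)$ throughout.
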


\subsection{Reduced Dynamical System $(T_t,\mathbf H)$} \label{delaysys}
With these estimates, the system given in \eqref{reducedplate}--\eqref{potential*} is independently well-posed \cite{delay} as plate equation with memory \cite{delay,springer}. Specifically, on the space for initial data
$$u(0)=u_0 \in H_0^2(\Omega),~~u_t(0)=u_1 \in H_0^1(\Omega),~~\\ u|_{t \in (-t^*,0)} = \eta\in L^2(-t^*,0;H^2_0(\Omega)).$$
 
 In application, we will consider an initial datum $y_0 \in Y$ corresponding to the full flow-plate dynamics $S_t(y_0)$ in \eqref{flowplate}. We wait a sufficiently long time $t^{\#}(\rho,U,\Omega)$ and employ the reduction result  Theorem \ref{rewrite}, and we may consider the ``initial time" ($t=t_0>t^{\#}$) for the delay dynamics. At such a time, the data which is fed into \eqref{reducedplate} is $x_0=(u(t_0),u_t(t_0),u^{t_0})$, where this data is determined by the full dynamics of \eqref{flowplate} on $(t_0-t^*,t_0)$. Thus, given a trajectory $S_t(y_0)=y(t)=(\phi(t),\phi_t(t);u(t),u_t(t))^T \in Y$, we may analyze the corresponding delay evolution $(T_t,\mathbf H)$, with $\mathbf H \equiv H_0^2(\Omega)\times L^2(\Omega)\times L^2\left(-t^*,0;H_0^2(\Omega)\right),$ with given data $x_0 \in \mathbf H$. We then have that $T_t(x_0)=\left(u(t),u_t(t);u^t\right)$ with $x_0=(u_0,u_1,\eta)$. The natural norm is taken to be $$||(u,v;\eta)||^2_{\mathbf H} \equiv ||\Delta u||^2+||v||^2+\int_{-t^*}^0||\Delta \eta(t+s)||^2 ds.$$

Using standard multiplier methods, along with the a priori boundedness in Lemma \ref{globalbound}, we obtain via Gr\"onwall's inequality the Lipschiz estimate below.
\begin{lemma}\label{l:lip}
Suppose $u^i(t)$ for $i=1,2$ are solutions to (\ref{reducedplate}) with different initial data and $z=u^1-u^2$. Additionally, assume that
\begin{equation}\label{bnd-R}
||u_t^i(t)||_{L^2_{\alpha}(\Omega)}^2+||\Delta u^i(t)||^2 \le R^2, ~i=1,2
\end{equation}
 for some $R>0$ and all $t \in [0,T]$. Then there exists $C>0$ and $a_R\equiv a_R(t^*)>0$  such that
\begin{align}\label{dynsysest}
||z_t(t)||_{L^2_{\alpha}(\Omega)}^2+||\Delta z(t)||^2 \le &~ Ce^{a_Rt}\Big\{||\Delta(u^1_0-u^2_0)||^2+||u^1_1-u^2_1||_1^2+\int_{-t^*}^0||\eta^1(\tau)-\eta^2(\tau)||_2^2d\tau\Big\}
\end{align}
for all $t \in [0,T]$.
\end{lemma}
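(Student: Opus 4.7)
The plan is a direct energy estimate on the difference equation, combined with a Gr\"onwall argument that carries along the memory contribution. First, I would subtract the two copies of \eqref{reducedplate} and, writing $z=u^1-u^2$, obtain
\begin{equation*}
M_{\alpha}z_{tt}+\Delta^2 z+kM_{\alpha}z_t+[f_v(u^1)-f_v(u^2)] = -(\partial_t+U\partial_{x_1})z - q(z^t),
\end{equation*}
using the linearity of the memory mapping $u\mapsto q(u^t)$ defined by \eqref{potential*}. Define the difference energy $E_z(t)=\tfrac12\bigl(\|z_t\|_{L^2_\alpha}^2+\|\Delta z\|^2\bigr)$. Pairing the difference equation with $z_t$ in $L^2(\Omega)$ and integrating by parts (admissible for strong solutions and extended by density) yields
\begin{equation*}
\tfrac{d}{dt}E_z+k\|z_t\|_{L^2_\alpha}^2+\|z_t\|^2 = -U\langle \partial_{x_1}z,z_t\rangle - \langle f_v(u^1)-f_v(u^2),z_t\rangle - \langle q(z^t),z_t\rangle.
\end{equation*}

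Next, I would estimate each right-hand term using the a priori bound \eqref{bnd-R}. The transport pairing is controlled by Poincar\'e and the embedding $L^2_\alpha\hookrightarrow L^2$: $|U\langle \partial_{x_1}z, z_t\rangle| \le C|U|\,\|\Delta z\|\,\|z_t\|_{L^2_\alpha}\le C\,E_z(t)$. The von Karman difference is handled by the sharp local Lipschitz estimate \eqref{airy-lip} (with $\delta=0$) combined with the regularity of $F_0$, giving $\|f_v(u^1)-f_v(u^2)\|\le C(R)\|\Delta z\|$ and hence $|\langle f_v(u^1)-f_v(u^2),z_t\rangle|\le C(R)\,E_z(t)$. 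The memory term is treated via \eqref{qnegest3}:
$$\bigl|\langle q(z^t),z_t\rangle\bigr| \le \tfrac12\|z_t\|^2 + C t^* \int_{t-t^*}^t \|\Delta z(\tau)\|^2\,d\tau,$$
with the $\tfrac12\|z_t\|^2$ absorbed into the positive left-hand term.

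Combining these (and dropping the nonnegative damping $k\|z_t\|_{L^2_\alpha}^2$) yields the differential inequality
$$\tfrac{d}{dt}E_z(t) \le C(R,t^*)\Big[E_z(t) + \int_{t-t^*}^t \|\Delta z(\tau)\|^2\,d\tau\Big].$$
To close, I would introduce the augmented functional $F(t):=E_z(t)+\int_{t-t^*}^t\|\Delta z(\tau)\|^2\,d\tau$; since $\tfrac{d}{dt}\int_{t-t^*}^t \|\Delta z(\tau)\|^2\,d\tau\le \|\Delta z(t)\|^2\le 2E_z(t)$, one gets $F'(t)\le a_R F(t)$. Gr\"onwall's lemma then yields
$$F(t)\le e^{a_R t}F(0) = e^{a_R t}\Big[E_z(0)+\int_{-t^*}^0 \|\Delta(\eta^1-\eta^2)(\tau)\|^2\,d\tau\Big],$$
which, recalling that $\|\cdot\|_{L^2_\alpha}$ is equivalent to $\|\cdot\|_1$ on $H_0^1$, reproduces the desired estimate \eqref{dynsysest}.

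The main obstacle, such as it is, is the bookkeeping of the memory: the delay term $q(z^t)$ introduces a rolling window of $H^2$-norms of past values that cannot be bounded pointwise in $t$ by $E_z(t)$ alone, which forces the augmented functional $F(t)$ to close the Gr\"onwall loop. The remaining estimates are routine: the nonlinear piece is controlled by the a priori bound \eqref{bnd-R} together with \eqref{airy-lip}, and the transport term $U\partial_{x_1}z$ is harmless because Poincar\'e in $H_0^2$ controls $\|\partial_{x_1}z\|$ by $\|\Delta z\|$.
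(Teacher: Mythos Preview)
Your proposal is correct and follows exactly the approach the paper indicates: the paper states only that the estimate follows from ``standard multiplier methods, along with the a priori boundedness in Lemma~\ref{globalbound}, \ldots via Gr\"onwall's inequality,'' without supplying details, and your argument is precisely the standard filling-in of those details---energy multiplier $z_t$, the local Lipschitz bound \eqref{airy-lip} on the nonlinearity, the $L^2$-level estimate \eqref{qnegest3} on $q$, and an augmented Gr\"onwall functional carrying the rolling memory window.
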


\section{Smooth Global Attractor for Reduced Plate Dynamics}

The main result in this section is that the plate dynamical system $(T_t,\mathbf H)$ has a compact global attractor which has additional nice properties.

	We recall that (see, e.g., \cite{Babin-Vishik,springer}) for a generic dynamical system $(S_t,H)$,
a compact \textit{global attractor} $A \subset \subset H$ is an invariant set (i.e., $S_tA=A$ for all
$t \in \mathbb R_+$) that uniformly attracts any bounded set $B\subset H$:~~
$\ds \lim_{t\to+\infty}d_{H}\{S_t B~|~A\}=0,$~where
$d_H$ corresponds to the Hausdorff semidistance. As we will see, $A$ will have finite fractal dimension in this case: $\text{dim}_f A < \infty$. The fractal dimension of a set is defined in terms of minimal coverings, \cite{springer,quasi}, and a set with finite fractal dimension can be included as a subset of some higher dimensional Euclidean space.

		\begin{theorem}[Smooth, Finite Dimensional Global Attractor]\label{maintheorem}
Let $k>0$, $U \neq 1$,  $p_0 \in L^2(\Omega)$, and $F_0 \in H^3(\Omega)$ in \eqref{flowplate}. Also assume the flow data $\phi_0,\phi_1 \in Y$ are localized (with supports in $K_\rho$, as in Theorem \ref{rewrite}).  Then the corresponding delay system $(T_t,\bH)$ has a compact global attractor $\mathbf A$ of finite fractal dimension in $\mathbf H$. Moreover, $\mathbf A$ has additional regularity: any full trajectory $y(t)=(u(t),u_t(t),u^t)\subset \mathbf A$, $t\ge 0$, has the property that $u \in C_r(\R;H^3(\Omega)\cap H_0^2(\Omega))$, $u_t \in C_r(\R;H_0^2(\Omega))$, and $u_{tt} \in C_r(\mathbb R; H_0^1(\Omega))$. 
\end{theorem}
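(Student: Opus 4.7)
The plan is to apply the quasi-stability framework of Chueshov--Lasiecka, which is ideally suited to second-order plate dynamics with damping. The proof breaks into four pieces: (i) absorbing set, (ii) asymptotic smoothness / quasi-stability, yielding existence of a compact global attractor; (iii) finite fractal dimension; (iv) additional regularity on $\mathbf A$.

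\textbf{Absorbing set.} Because the reduced system \eqref{reducedplate} contains the non-conservative term $U\partial_{x_1} u$ and the memory term $q(u^t)$, it is not gradient, so I cannot appeal to a Lyapunov/LaSalle argument. Instead, I would construct a perturbed energy
\[
V(t) = \cE_{pl}(t) + \nu \langle M_{\alpha}u_t, u\rangle + \kappa \int_{-t^*}^{0}\!\!(s+t^*)\,\|\Delta u(t+s)\|^2\,ds,
\]
with small parameters $\nu,\kappa>0$ chosen to absorb the cross terms. Using the damping term $k\|u_t\|_{L^2_{\alpha}}^2$, the estimates of Proposition \ref{pr:q} for $q(u^t)$, the bound \eqref{intbound}-type control of $U\langle\partial_{x_1}u,u_t\rangle$, and Proposition \ref{littleest}\eqref{l:epsilon} to swallow lower-order nonlinear garbage, I would derive $\dot V + \omega V \le C$. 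Gronwall then yields a bounded absorbing set $\mathcal B\subset \bH$ for $(T_t,\bH)$.

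\textbf{Quasi-stability and attractor.} For two trajectories $u^1,u^2$ with initial data in $\mathcal B$, set $z=u^1-u^2$. The equation for $z$ reads
\[
M_{\alpha}z_{tt}+\Delta^2 z + kM_{\alpha}z_t + U\partial_{x_1}z + z_t + q(z^t) = -[f_v(u^1)-f_v(u^2)].
\]
Testing with $z_t$ and applying a multiplier of the form $z + \theta M_{\alpha}^{-1}\mathcal A^{-1}(\cdots)$, combined with the sharp Lipschitz estimate \eqref{airy-lip} for the von Karman nonlinearity (which is subcritical for the $H^2$-topology, hence compact in any $H^{2-\delta}$), and the memory estimate \eqref{qnegest3}, I expect to establish a quasi-stability bound
\[
\|T_t x_1 - T_t x_2\|_{\bH}^2 \le C\,e^{-\omega t}\|x_1-x_2\|_{\bH}^2 + C\sup_{\tau\in[0,t]}\|z(\tau)\|_{2-\delta}^2.
\]
Since the seminorm $\|\cdot\|_{2-\delta}$ is compact on $H_0^2(\Omega)$, this is exactly a quasi-stability estimate on $\mathcal B$ in the sense of \cite{springer,quasi}. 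Asymptotic smoothness follows at once, and together with the absorbing set produces a compact global attractor $\mathbf A\subset \bH$. The hard part will be this quasi-stability estimate: dealing simultaneously with the delay term $q(u^t)$ (which is only lower-order but non-local in time) and with the non-dissipative transport $U\partial_{x_1}u$, while still closing the estimate purely in the compact lower-order norm, requires careful bookkeeping of time integrals over $[-t^*,t]$ and use of the rotational inertia to gain the $H^1$-level control on $z_t$ needed to absorb $\langle z_t, U\partial_{x_1} z\rangle$.

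\textbf{Finite dimension and regularity.} Once quasi-stability on $\mathcal B$ is established, the abstract theorem of Chueshov--Lasiecka (see \cite[Ch.~7]{springer} and \cite{quasi}) immediately gives $\dim_f \mathbf A < \infty$. For the additional regularity, I would differentiate the difference equation in $t$ and apply the same multiplier scheme to $z=u_t^1-u_t^2$ along a trajectory on the attractor, using invariance $T_t\mathbf A=\mathbf A$ and that $\mathbf A$ is already bounded in $\bH$; this promotes $u_t$ into $H_0^2$ and $u_{tt}$ into $H_0^1$. Elliptic regularity applied to \eqref{reducedplate} (with all other terms now in $L^2$ thanks to the improved time regularity, the estimate \eqref{qnegest3} for $q(u^t)$, and $f_v(u)\in L^2$ for $u\in H^2\cap H^4_{\text{loc}}$) then upgrades $u$ to $H^3\cap H_0^2$. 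The continuity $C_r(\mathbb R;\cdot)$ on full trajectories follows from weak continuity plus norm continuity via the standard argument for attractors of second-order equations.
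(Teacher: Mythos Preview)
Your plan is correct and follows essentially the same route as the paper: a perturbed Lyapunov functional (your memory term $\int_{-t^*}^0(s+t^*)\|\Delta u(t+s)\|^2\,ds$ is exactly the Fubini form of the paper's double integral $\int_0^{t^*}\!\int_{t-s}^t\Pi_*(u(\tau))\,d\tau\,ds$) to produce an absorbing ball, followed by the quasi-stability estimate on differences and an appeal to the abstract Chueshov--Lasiecka machinery. Two small remarks: (i) the paper obtains the time-regularity $u_t\in H_0^2$, $u_{tt}\in H_0^1$ \emph{directly} as part of the abstract quasi-stability theorem (your Theorem~\ref{doy}) rather than by differentiating in $t$ and rerunning multipliers---your approach is what the abstract proof does internally, so this is only a packaging difference; (ii) in the elliptic step, since $u_{tt}\in H_0^1$ you only get $M_\alpha u_{tt}\in H^{-1}$ (not $L^2$), so the right-hand side of \eqref{reducedplate} lies in $H^{-1}(\Omega)$, which is precisely what is needed for $u\in H^3\cap H_0^2$.
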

\noindent This can be rephrased for the full system $(S_t,Y)$ by projecting on the first two components of $\mathbf H$:
\begin{corollary}\label{th:main2*}
With the same hypotheses as Theorem \ref{maintheorem},  there exists a compact set $\mathscr{U} \subset H_0^2(\Omega) \times H_0^1(\Omega)$ of finite fractal dimension such that for any weak solution $(u,u_t;\phi,\phi_t)$ to (\ref{flowplate}) has  $$\lim_{t\to\infty} d_{Y_{pl}} \big( (u(t),u_t(t)),\mathscr U\big)=\lim_{t \to \infty}\inf_{(w_0,w_1) \in \mathscr U} \big( ||u(t)-w_0||_2^2+||u_t(t)-w_1||_1^2\big)=0.$$ We also have the additional regularity $\mathscr{U} \subset \big(H^3(\Omega)\cap H_0^2(\Omega)\big) \times H_0^2(\Omega)$.
\end{corollary}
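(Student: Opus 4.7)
The plan is to realize $\mathscr{U}$ as the projection of the delay-system attractor $\mathbf{A}$ from Theorem \ref{maintheorem} onto its first two coordinates, and to transfer attraction in $\mathbf{H}$ to attraction in $Y_{pl}$ through continuity of that projection. Concretely, I would define the canonical linear map $\Pi : \mathbf{H} \to Y_{pl}$ by $\Pi(u_0, v_0, \eta) = (u_0, v_0)$, and set $\mathscr{U} := \Pi(\mathbf{A})$.

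Compactness of $\mathscr{U}$ in $H_0^2(\Omega) \times H_0^1(\Omega)$ follows from compactness of $\mathbf{A}$ in $\mathbf{H}$ together with continuity of $\Pi$ — though if one reads $\mathbf{H}$ literally as in Section \ref{delaysys}, only compactness in $H_0^2 \times L^2$ is immediate, and one needs the regularity statement from Theorem \ref{maintheorem} to upgrade. Specifically, since any full trajectory on $\mathbf{A}$ satisfies $u \in C_r(\mathbb{R}; H^3 \cap H_0^2)$ and $u_t \in C_r(\mathbb{R}; H_0^2)$, the projection $\mathscr{U}$ sits in a bounded subset of $(H^3 \cap H_0^2)(\Omega) \times H_0^2(\Omega)$; by the compact embeddings $H^3 \hookrightarrow H^2$ and $H^2 \hookrightarrow H^1$, this yields the desired compactness in $Y_{pl}$ as well as the stated additional regularity. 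The fractal dimension bound transfers because $\Pi$ is Lipschitz (linear and bounded), and Lipschitz maps never increase fractal dimension.

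For the convergence claim, I would fix a weak solution $(\phi,u)$ to \eqref{flowplate} with the hypothesized localized flow data. Theorem \ref{rewrite} furnishes a threshold $t^{\#}$ past which the plate trajectory coincides with an orbit of $(T_t,\mathbf{H})$ issuing from $x^{\#} := (u(t^{\#}), u_t(t^{\#}), u^{t^{\#}}) \in \mathbf{H}$, the memory component lying in $L^2(-t^*, 0; H_0^2(\Omega))$ by the global bound of Lemma \ref{globalbound}. Global attraction of $\mathbf{A}$ then delivers $d_\mathbf{H}(T_{t-t^{\#}}x^{\#}, \mathbf{A}) \to 0$, and applying $\Pi$ yields $d_{Y_{pl}}((u(t),u_t(t)), \mathscr{U}) \to 0$. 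The main obstacle — and the one point deserving care — is that the corollary demands attraction of $u_t$ in $\|\cdot\|_1$, while a literal reading of $\mathbf{H}$ only gives $\|\cdot\|_0$ attraction. The natural resolution is that the second slot of $\mathbf{H}$ should be read as $L_\alpha^2(\Omega) \cong H_0^1(\Omega)$, matching both the rotational-inertia norm used throughout Section \ref{energiessols} and the left-hand side of the Lipschitz estimate of Lemma \ref{l:lip}; with this identification $\Pi$ maps continuously into $Y_{pl}$ and the argument above closes.
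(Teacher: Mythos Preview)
Your proposal is correct and follows exactly the approach the paper indicates: the corollary is stated immediately after Theorem \ref{maintheorem} with the one-line justification ``projecting on the first two components of $\mathbf{H}$,'' and you have simply supplied the details of that projection argument. Your identification and resolution of the inconsistency in the paper's definition of $\mathbf{H}$ (second slot written as $L^2(\Omega)$ rather than $L^2_\alpha(\Omega)\cong H_0^1(\Omega)$, which is what the energy $E_z$, the Lipschitz estimate of Lemma \ref{l:lip}, and the quasi-stability analysis actually use) is apt and necessary for the $H^1$-attraction of $u_t$ claimed in the corollary.
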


The proof of Theorem \eqref{maintheorem} proceeds in two steps: the construction of an absorbing ball by Lyapunov methods, followed by the attainment of the so called quasi-stability property on the absorbing ball. 

\subsection{Construction of Absorbing Ball}
For the non-conservative plate dynamics given by \eqref{reducedplate}, we explicitly construct the absorbing ball via a Lyapunov approach. 
Recalling the definition of $E_{pl}$ from \eqref{energies} and defining the quantities $$\Pi_*(u) = \dfrac{1}{2}\big[||\Delta u ||^2+\frac{1}{2}||\Delta v(u)||^2\big],~~E_*(u,u_t) = \frac{1}{2}||u_t||^2_{L^2_{\alpha}(\Omega)}+\Pi_*(u),$$ we consider the Lyapunov-type function
\begin{align}\label{veedef}
V\big(T_t (x)\big) \equiv &~E_{pl}(u(t),u_t(t))+\nu\big[\langle M_{\alpha}^{1/2}u_t,M^{1/2}_{\alpha}u\rangle +k\langle M_{\alpha}^{1/2}u,M_{\alpha}^{1/2}u\rangle \big]+\mu\int_0^{t^*}\int_{t-s}^t \Pi_*(u(\tau))d\tau ds,
\end{align}
where $T_t(x) \equiv x(t)= (u(t),u_t(t),u^t)$ for $t \ge 0$\footnote{without loss of generality, take $t_0=0$},
 and $\mu, \nu$ are some small, positive numbers to be specified below. Using the elementary inequality
 $$\int_0^{t^*}\int_{t-s}^t\Pi_*(u(\tau)) d\tau ds \le t^*\int_{t-t^*}^t \Pi_*(u(\tau)) d\tau,$$ 
we establish the topological equivalence between $V\big(T_t (x)\big)$ and $E_*$, which is given by the following lemma.
 \begin{lemma} With $(T_t,\mathbf H)$ defined in Section \ref{delaysys}. and $V$ defined as in \eqref{veedef}, 
we have that there exists $\nu_0>0$ such that for all $0< \nu \le \nu_0$ there are $c_0(\nu_0),c_1,c(\nu_0),C >0$
\begin{equation}\label{energybounds}
c_0E_* - c \le V(T_t(x)) \le c_1E_*+\mu C t^*\int_{-t^*}^0 \Pi_*(u(t+\tau))d\tau +c.
\end{equation}
\end{lemma}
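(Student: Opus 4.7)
The plan is to establish both sides of the sandwich inequality by absorbing each auxiliary term of $V$ into either $E_*$ or a benign constant, relying crucially on the superlinearity estimate \eqref{l:epsilon} and the topological equivalence of the $L_\alpha^2$-norm with $H_0^1$ (so that any $\|u\|^2_{L^2_\alpha}$ contribution is a strict lower-order term relative to $\|\Delta u\|^2$).

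First I would decompose $E_{pl} = E_*(u,u_t) - \langle F_0,[u,u]\rangle + \langle p_0,u\rangle$ and handle the two offending terms uniformly. For $\langle p_0,u\rangle$, Cauchy-Schwarz followed by Poincar\'e gives $|\langle p_0,u\rangle| \le \varepsilon \|\Delta u\|^2 + C(\varepsilon)\|p_0\|^2$. For the von Karman pairing $\langle F_0,[u,u]\rangle$, duality and the sharp Airy estimate (or, equivalently, the symmetry of the bracket and $F_0\in H^3$) yield $|\langle F_0,[u,u]\rangle| \le C\|F_0\|_3\|u\|_{2-\eta}^2$, which by \eqref{l:epsilon} is bounded by $\varepsilon\, \Pi_*(u) + M_{F_0,\varepsilon}$. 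Thus
\begin{equation*}
E_*(u,u_t) - \varepsilon \Pi_*(u) - M \;\le\; E_{pl}(u,u_t) \;\le\; E_*(u,u_t) + \varepsilon \Pi_*(u) + M.
\end{equation*}

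Second I would handle the cross and damping correction. Cauchy-Schwarz gives $|\langle M_\alpha^{1/2}u_t,M_\alpha^{1/2}u\rangle| \le \tfrac12\|u_t\|_{L^2_\alpha}^2 + \tfrac12\|u\|_{L^2_\alpha}^2$, and $\|u\|_{L^2_\alpha}^2 = \alpha\|\nabla u\|^2 + \|u\|^2 \le C_\Omega\|\Delta u\|^2$ by Poincar\'e. Similarly $k\langle M_\alpha^{1/2}u,M_\alpha^{1/2}u\rangle = k\|u\|_{L^2_\alpha}^2 \le kC_\Omega\|\Delta u\|^2$. Hence, for $\nu$ small enough, these two terms are dominated by a small fraction of $E_*$ and leave the sign structure of $E_*$ intact. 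Concretely, $\nu_0$ is chosen so that $\nu_0(\tfrac12 + k C_\Omega) < \tfrac12\min\{1, 1/2\}$ (i.e.\ so that after subtraction a definite multiple of both kinetic and potential parts of $E_*$ survives). The memory term $\mu \int_0^{t^*}\!\!\int_{t-s}^t \Pi_*(u(\tau))\,d\tau\,ds$ is nonnegative, so it drops out of the lower bound, while for the upper bound we use the already stated elementary inequality $\int_0^{t^*}\!\!\int_{t-s}^t \Pi_*\,d\tau\,ds \le t^*\int_{t-t^*}^t \Pi_*(u(\tau))\,d\tau$ and substitute $\tau = t+s$ to match the form in \eqref{energybounds}.

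Combining these, after fixing $\varepsilon$ small and then $\nu\le\nu_0$ small (in that order, so that the $\Pi_*$ absorption in the lower bound is unaffected by the cross-term manipulation), one obtains
\begin{equation*}
c_0 E_*(u,u_t) - c \;\le\; V(T_t x) \;\le\; c_1 E_*(u,u_t) + \mu C t^* \int_{-t^*}^0 \Pi_*(u(t+\tau))\,d\tau + c,
\end{equation*}
with $c_0 = c_0(\nu_0)$, $c_1$ independent of $\nu$, and $c$ depending only on $\|p_0\|$, $\|F_0\|_3$. The only subtle point, which I expect to be the main technical obstacle, is the handling of $\langle F_0,[u,u]\rangle$: one must keep the loss in Sobolev regularity small enough that \eqref{l:epsilon} can be invoked with $\eta>0$ and $\varepsilon$ pushed below the $\Pi_*$ coefficient picked up on the positive side, so that the net $\Pi_*$ contribution in the lower bound remains bounded below by a positive multiple. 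All other steps reduce to Cauchy-Schwarz, Poincar\'e, and the definition of the memory integral.
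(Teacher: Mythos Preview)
Your proposal is correct and follows essentially the same approach the paper has in mind: the paper does not give a detailed proof of this lemma, indicating only that it follows from the elementary inequality on the double integral together with the estimates assembled in Proposition~\ref{littleest} (in particular \eqref{l:epsilon}); your argument is precisely the natural way to flesh this out, decomposing $E_{pl}$ into $E_*$ plus the unsigned $F_0$ and $p_0$ pairings, absorbing those via \eqref{l:epsilon} and Poincar\'e, controlling the $\nu$-cross terms by Cauchy--Schwarz in $L^2_\alpha$, and handling the memory term via nonnegativity (lower bound) and the stated elementary inequality (upper bound). One small remark: your intermediate claim $|\langle F_0,[u,u]\rangle|\le C\|F_0\|_3\|u\|_{2-\eta}^2$ is correct but deserves a word of justification---it relies on the double-divergence structure of $[u,u]$, which after two integrations by parts leaves only $\nabla^2 F_0$ against $(\nabla u)^2$, whence H\"older and the 2D Sobolev embedding $H^1\hookrightarrow L^p$ (all $p<\infty$) give the stated bound; alternatively, the Airy identity $\langle F_0,[u,u]\rangle=-\langle\Delta F_0,\Delta v(u)\rangle$ gives $|\langle F_0,[u,u]\rangle|\le\epsilon\|\Delta v(u)\|^2+C_\epsilon\|F_0\|_2^2$ directly in terms of $\Pi_*$, which is slightly cleaner and avoids invoking \eqref{l:epsilon} for this term.
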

A careful but direct calculation of $\dfrac{d}{dt}V(T_t(x))$, coupled with the estimates on the nonlinear potential energy Lemma \ref{l:epsilon} and the estimate on $q(u^t)$ at the $L^2$ level in Lemma \ref{pr:q}, produces, 
for $0<\nu<\min{\{\nu_0,1\}}$, and for $\mu$ sufficiently small, the following lemma:
\begin{lemma}\label{le:48}
For all $k > 0$, there exist $\mu, \nu >0$ sufficiently small, and $c(\mu,\nu,t^*,k),C(\mu,\nu,p_0,F_0)>0$ such that
\begin{align}\label{goodneg}
\dfrac{d}{dt}V(
T_t(x))\le& ~C-c\Big\{E_*(u,u_t) +\int_{-t^*}^0\Pi_*(u(t+\tau))d\tau\Big\}.
\end{align}
\end{lemma}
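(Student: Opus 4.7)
The plan is to differentiate $V(T_t(x))$ term by term along trajectories of the reduced plate equation \eqref{reducedplate}, use that equation to substitute for $M_{\alpha} u_{tt}$, and then show that after careful parameter selection the only surviving signs are $-c\,E_*(u,u_t)$ and $-c\int_{-t^*}^0 \Pi_*(u(t+\tau))\,d\tau$, modulo a bounded constant. Differentiating $E_{pl}$ and testing \eqref{reducedplate} with $u_t$ produces the essentially ``dissipative'' identity
\begin{equation*}
\frac{d}{dt} E_{pl} = -k\|u_t\|_{L^2_{\alpha}(\Omega)}^{2} - \|u_t\|^{2} - U\langle u_{x_1},u_t\rangle - \langle q(u^t),u_t\rangle,
\end{equation*}
where the last three terms encode the non-conservative flow coupling surviving after the reduction. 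Differentiating the equipartition piece $\nu[\langle M_{\alpha}^{1/2}u_t,M_{\alpha}^{1/2}u\rangle + k\|M_{\alpha}^{1/2}u\|^{2}]$ and again substituting from \eqref{reducedplate} produces the critical negative contribution $-\nu\|\Delta u\|^2$ together with a $-\nu\langle f_v(u),u\rangle$ term. The key algebraic point here is the Airy identity $\langle [u,v(u)],u\rangle = -\|\Delta v(u)\|^{2}$ (from $\Delta^2 v(u)=-[u,u]$ with clamped boundary conditions), which gives $-\nu\langle f_v(u),u\rangle = -\nu\|\Delta v(u)\|^{2} + \nu\langle [u,F_0],u\rangle$, i.e., a second strictly negative contribution of the desired form, up to lower-order in-plane loading. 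Finally, differentiating the memory integral produces exactly $\mu t^*\Pi_*(u(t)) - \mu \int_{-t^*}^{0}\Pi_*(u(t+\tau))\,d\tau$.

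I would then regroup these contributions as (i) a ``principal negative part'' $-k\|u_t\|_{L^2_{\alpha}}^{2} - 2\nu\Pi_*(u) + \mu t^*\Pi_*(u) - \mu\int_{-t^*}^{0}\Pi_*$; (ii) the non-conservative flow terms involving $\|u_t\|^2$, $U\langle u_{x_1},u_t\rangle$, $\langle q(u^t),u_t\rangle$, $U\nu\langle u_{x_1},u\rangle$, $\nu\langle q(u^t),u\rangle$; and (iii) lower-order terms arising from $\nu k\langle M_{\alpha}u_t,u\rangle$, $\nu\langle p_0,u\rangle$, $\nu\langle u_t,u\rangle$, and $\nu\langle [u,F_0],u\rangle$. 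The non-conservative terms I would bound by Cauchy--Schwarz and Young's inequality with small parameter, producing terms of the form $\epsilon\|u_t\|^2 + C_{\epsilon}\|u\|_1^{2}$ and $\epsilon\|u_t\|^2 + C_{\epsilon}\|q(u^t)\|^{2}$; the $L^2$-level delay estimate \eqref{qnegest3} then bounds $\|q(u^t)\|^2$ by $ct^*\int_{t-t^*}^{t}\|u(\tau)\|_2^{2}\,d\tau$, which is controlled by the memory integral $\int_{-t^*}^{0}\Pi_*(u(t+\tau))\,d\tau$. For terms involving $\|u\|_{s}$ with $s<2$ (from $\langle p_0,u\rangle$, $\langle [u,F_0],u\rangle$, $k\langle M_{\alpha}u_t,u\rangle$, and from the $U\langle u_{x_1},u\rangle$ contribution) I would apply Proposition~\ref{littleest}, estimate \eqref{l:epsilon}, writing $\|u\|_{2-\eta}^2 \le \epsilon[\|\Delta u\|^2 + \|\Delta v(u)\|^2] + M_{\eta,\epsilon}$, so that these are absorbed into $\Pi_*(u)$ at the cost of a uniform constant $C(p_0,F_0)$.

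The last step, and the one requiring care, is the parameter hierarchy: fix $k>0$, choose $\nu\in(0,\nu_0)$ small enough that $\nu\|u_t\|_{L^2_{\alpha}}^{2}$ and the $\epsilon$-absorptions of $\|u_t\|^2$ (which is $\le \|u_t\|_{L^2_{\alpha}}^{2}$) are strictly dominated by $k\|u_t\|_{L^2_{\alpha}}^{2}$, then choose $\mu>0$ small so that $\mu t^*\Pi_*(u) \le \nu\Pi_*(u)$ (ensuring a net negative coefficient $-\nu\Pi_*(u)$) while $\mu$ is simultaneously large enough (relative to the absorption constants $C_{\epsilon}$ applied to $\|q(u^t)\|^2$) that the memory bath $-\mu\int_{-t^*}^{0}\Pi_*(u(t+\tau))\,d\tau$ dominates the memory contributions from $\langle q(u^t),u_t\rangle$ and $\nu\langle q(u^t),u\rangle$. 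The main obstacle is precisely this simultaneous balancing: $\mu$ must be small in an absolute sense (to beat $\nu$ in the ``principal'' estimate) yet large relative to the fixed Young's-inequality constants that arise in controlling the delay potential, and this is why the ordering of choices---$k$ fixed, then $\nu$, then $\epsilon$ (inside Young), then $\mu$---matters. Once the choices are locked in, collecting all contributions gives
\begin{equation*}
\frac{d}{dt}V(T_t(x)) \le C(\mu,\nu,p_0,F_0) - c(\mu,\nu,t^*,k)\Big\{E_*(u,u_t) + \int_{-t^*}^{0}\Pi_*(u(t+\tau))\,d\tau\Big\},
\end{equation*}
which is the asserted inequality \eqref{goodneg}.
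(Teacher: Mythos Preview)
Your proposal is correct and follows essentially the same route the paper sketches: differentiate $V$ along \eqref{reducedplate}, extract the principal negative contribution $-\nu\|\Delta u\|^2-\nu\|\Delta v(u)\|^2$ from the equipartition piece via the Airy identity $\langle[u,v(u)],u\rangle=-\|\Delta v(u)\|^2$, control the delay potential through Proposition~\ref{pr:q}, and absorb all sub-principal terms using the nonlinear compactness estimate \eqref{l:epsilon}. The paper's own proof is a single sentence invoking precisely these two ingredients (\eqref{l:epsilon} and the $L^2$-level bound \eqref{qnegest3}), so your outline is in fact more detailed than what appears there; your explicit identification of the competing constraints on $\mu$ is exactly the ``careful'' part the paper leaves implicit.
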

From this lemma and the upper bound in \eqref{energybounds}, we have a $\delta(k,\mu,\nu)>0$ and a $C(\mu,\nu)$: \begin{equation}\label{gronish}
\dfrac{d}{dt}V(T_t(x)) +\delta V(T_t(x)) \le C,~~t>0.
\end{equation}
The estimate above in (\ref{gronish}) implies (via an integrating factor) that
\begin{equation}\label{balll}
V(T_t(x)) \le V(x)e^{-\delta t}+\dfrac{C}{\delta}(1-e^{-\delta t}).
\end{equation}
Hence, the set
$$
\mathscr{B} \equiv \left\{x \in \bH:~V(x) \le 1+\dfrac{C}{\delta} \right\},
$$  is a bounded forward invariant absorbing set for $(T_t,\mathbf H)$. This, along with \eqref{energybounds}, gives that $(T_t,\bH)$ is ultimately dissipative in the sense of dynamical systems \cite{temam,Babin-Vishik}.

\subsection{Tools from Quasi-stability Theory}

We now proceed by discussing the specific tool we use in the construction of the attractor: quasi-stability \cite{quasi,springer}.
\begin{condition}\label{secondorder} Consider second order (in time) dynamics $(S_t,H)$ where $H=X \times Z$ with $X,Z$ Hilbert, and $X$ compactly embedded into $Z$.  Further, suppose $y= (x,z) \in H$ with $S_ty =(x(t),x_t(t))$ where the function $x \in C(\mathbb R_+,X)\cap C^1(\mathbb R_+,Z)$.
\end{condition}
 Condition \ref{secondorder} restricts our attention to second order, hyperbolic-like evolutions.
\begin{condition}\label{locallylip} Suppose the evolution operator $S_t: H \to H$ is locally Lipschitz, with Lipschitz constant $a(t)\in L^{\infty}_{loc}([0,\infty))$:
\begin{equation}\label{specquasi}
\|S_ty_1-S_ty_2\|_H^2 \le a(t)\|y_1-y_2\|_H^2.
\end{equation}
\end{condition}
\begin{definition}\label{quasidef}
With Conditions \ref{secondorder} and \ref{locallylip} in force, suppose that the dynamics $(S_t,H)$ admit the following estimate for $y_1,y_2 \in B \subset H$:
\begin{equation}\label{specquasi*}
\|S_ty_1-S_ty_2\|_H^2 \le e^{-\gamma t}\|y_1-y_2\|_H^2+C_q\sup_{\tau \in [0,t]} \|x_1-x_2\|^2_{Z_*}, ~~\text{ for some }~~\gamma, C_q>0,
\end{equation} where $Z \subseteq Z_* \subset X$, and the last embedding is compact. Then we say that $(S_t,H)$ is {\em quasi-stable} on $B$.
\end{definition}

We now run through a handful of consequences of the type of quasi-stability described by Definition \ref{quasidef} above for dynamical systems $(S_t,H)$ satisfying Condition \ref{secondorder}
\cite[Proposition 7.9.4]{springer}.
\begin{theorem}\label{doy}
If a dynamical system $(S_t,H)$ satisfying Conditions  \ref{secondorder} and \ref{locallylip} is quasi-stable on an absorbing ball $ B \subset H$, then there exists a compact global attractor $\mathbf A \subset \subset H$.
$$\sup_{t \in \mathbb R} \left\{\|x_t(t)\|^2_X+\|x_{tt}(t)\|_Z^2\right\} \le C,$$
where the constant $C$ above depends on the ``compactness constant" $C_q$ in \eqref{specquasi*}.
\end{theorem}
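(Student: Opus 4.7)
The plan is to derive both conclusions of Theorem \ref{doy}---existence of the compact global attractor and the regularity bound on it---from the quasi-stability estimate \eqref{specquasi*} in two independent stages. Stage one produces $\mathbf{A}$ by establishing asymptotic smoothness and invoking the classical theorem that ultimate dissipativity (which we already have from the bounded absorbing ball $B$) together with asymptotic smoothness yields a compact global attractor $\mathbf{A} = \omega(B)$. Stage two obtains the regularity estimate by applying quasi-stability to pairs consisting of a full trajectory on $\mathbf{A}$ and a small time-translate, then passing to a difference quotient.

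For the first stage, given any bounded sequence $\{y_n\} \subset B$ and times $t_n \to \infty$, I would extract a Cauchy subsequence of $\{S_{t_n} y_n\}$ in $H$. Applying \eqref{specquasi*} after shifting to a common base time, the first term $e^{-\gamma t}\|y_n - y_m\|_H^2$ is made small by taking $t$ large since $B$ is bounded. For the supremum term, the position components are uniformly bounded in $X$ (by Condition \ref{secondorder} and boundedness of $B$) and equicontinuous in $Z_*$ (via the local Lipschitz bound of Condition \ref{locallylip}). Arzelà--Ascoli on $C([0,t];Z_*)$ together with the compact embedding $X \hookrightarrow Z_*$ delivers a subsequence convergent in $C([0,t];Z_*)$, making the second term arbitrarily small. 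This yields asymptotic smoothness and hence the compact global attractor $\mathbf{A}$.

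For the regularity bound, I exploit the fact that every trajectory on the invariant set $\mathbf{A}$ extends to a full two-sided trajectory $\gamma(t) = (x(t),x_t(t))$, $t \in \mathbb{R}$, contained in (and hence uniformly bounded by) $\mathbf{A}$. Fix such $\gamma$ and, for small $h>0$, let $\gamma_h(\cdot) := \gamma(\cdot+h)$. Applying \eqref{specquasi*} on the interval $[\tau-T,\tau]$ gives
$$\|\gamma(\tau)-\gamma_h(\tau)\|_H^2 \le e^{-\gamma T}\|\gamma(\tau-T)-\gamma_h(\tau-T)\|_H^2 + C_q \sup_{s \in [\tau-T,\tau]} \|x(s)-x(s+h)\|_{Z_*}^2.$$
Sending $T \to \infty$ annihilates the first term (both curves remain in the bounded set $\mathbf{A}$), leaving a bound on $\|\gamma(\tau)-\gamma_h(\tau)\|_H^2$ by $C_q \sup_{s \le \tau}\|x(s)-x(s+h)\|_{Z_*}^2$. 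Dividing by $h^2$, passing to the limit $h \to 0$ (first distributionally, then upgrading by Condition \ref{secondorder}), and using $\|\gamma_t\|_H^2 = \|x_t\|_X^2 + \|x_{tt}\|_Z^2$ produces $\|x_t(\tau)\|_X^2 + \|x_{tt}(\tau)\|_Z^2 \le C_q \sup_{s \le \tau}\|x_t(s)\|_{Z_*}^2$, with the right-hand side uniformly controlled since $x_t$ is bounded in $Z$ on $\mathbf{A}$ and $Z_*$ is a weaker lower-order norm.

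The principal obstacle, I expect, is the difference-quotient limit in the second stage: justifying the division by $h^2$ and the passage $h \to 0$ to extract pointwise strong derivatives, rather than mere finite-difference bounds. This requires a uniform-in-$h$ a priori $H$-norm bound on $h^{-1}(\gamma-\gamma_h)$, followed by a weak-convergence argument exploiting lower semicontinuity of the energy norm, and a final upgrade to pointwise control using continuity of $S_t$-orbits in $H$ on the compact invariant set $\mathbf{A}$. The other ingredients---equicontinuity for Arzelà--Ascoli in stage one, and the Ladyzhenskaya/Hale criterion identifying $\omega(B)$ as the compact global attractor---are, by comparison, standard once the quasi-stability framework of Definition \ref{quasidef} is in place.
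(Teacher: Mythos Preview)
The paper does not supply its own proof of Theorem~\ref{doy}: it is quoted as a consequence of the abstract quasi-stability machinery and attributed to \cite[Proposition~7.9.4]{springer} (and, more broadly, to the framework in \cite{quasi}). There is therefore no in-paper argument to compare against directly.

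That said, your two-stage outline is precisely the standard proof one finds in those references. Stage one---asymptotic smoothness via the quasi-stability inequality, with Arzel\`a--Ascoli on $C([0,t];Z_*)$ handling the lower-order supremum term, followed by the Ladyzhenskaya/Hale criterion---is exactly how the attractor is produced there. Stage two---applying \eqref{specquasi*} to a full trajectory on $\mathbf A$ and its time-translate, letting the starting time recede to $-\infty$ to kill the exponential term, then passing to difference quotients---is likewise the textbook route to the regularity bound. Your identification of the delicate point (justifying the $h\to 0$ limit after dividing by $h^2$) is accurate; in the references this is handled by first obtaining a uniform-in-$h$ bound on $h^{-1}(\gamma-\gamma_h)$ in $H$ from the inequality itself, then using weak-$*$ compactness in $L^\infty(\mathbb R;H)$ and lower semicontinuity, exactly as you anticipate. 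So your proposal is correct and coincides with the approach the paper defers to.
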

\noindent Elliptic regularity can then be applied to the equation itself generating the dynamics $(S_t,H)$ to recover regularity for $x(t)$ in a norm higher than that of the state space $X$.

\subsection{Quasi-stability on the Absorbing Ball}
For the discussion of quasi-stability, we begin with the standard observability and energy inequalities which follow from energy methods developed for the wave equation. The details presented for $\alpha=0$ in \cite{delay} are unaltered different here. 

Let us utilize the notation that ~$E_z(t) = ||z_t(t)||_{L^2_{\alpha}(\Omega)}^2+||\Delta z(t)||^2$. We state the following estimates without proof.
\begin{lemma}[Preliminary Estimates]\label{le:observbl}
Let $$u^i \in C(0,T;H_0^2(\Omega))\cap C^1(0,T;L^2_{\alpha}(\Omega)) \cap L^2(-t^*,T;H_0^2(\Omega))$$ solve (\ref{reducedplate}) with appropriate initial conditions on $[0,T]$ for $i=1,2$, $T\ge 2t^*$. Additionally, assume $(u^i(t),u^i_t(t)) \in  B_R(Y_{pl})$ for all $t\in [0,T]$.
Then the following  estimates on $z$ holds for some $\delta \in (0,2]$:
\begin{align}\label{zenergyprelim-3}
\Ez(t)+\int_s^t E_z d\tau \le& ~ a_0\left(\Ez(s)+\int_{s-t^*}^s ||z(\tau)||_{2-\delta}^2 d\tau\right)
  +C(T,R,\delta)\sup_{\tau \in [s,t]}||z||^2_{2-\delta}
\\\nonumber &-a_1\int_s^t \langle f(u^1)-f(u^2),z_t\rangle d\tau.
\end{align}

\begin{align}\label{enest1}
 \frac{T}2 \Big[\Ez(T)+ \int_{T-t^*}^T \Ez(\tau) d\tau\Big] \le& ~ a_2\left(\Ez(0)+\int_{-t^*}^0 ||z(\tau)||_{2}^2 d\tau\right) +C(T,R,\delta)\sup_{\tau \in [0,T]}||z||^2_{2-\delta} \\\nonumber
\\\nonumber &-a_3\int_0^T \int_s^T \langle f(u^1)-f(u^2),z_t\rangle ~d\tau ds -a_4\int_0^T \langle f(u^1)-f(u^2),z_t\rangle d\tau,
\end{align}
 with the positive $a_i$ independent of $T$ and $R$.
\end{lemma}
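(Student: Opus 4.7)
The plan is to exploit two multiplier identities applied to the difference $z = u^1 - u^2$, which (subtracting \eqref{reducedplate} for $u^1$ and $u^2$) satisfies
\begin{equation*}
M_{\alpha}z_{tt} + \Delta^2 z + kM_{\alpha} z_t + (\partial_t+U\partial_{x_1})z + q(z^t) = -[f_v(u^1)-f_v(u^2)].
\end{equation*}
Pairing this equation with $z_t$ and integrating over $[s,t]$ produces the energy identity: the principal part yields $\tfrac12 (E_z(t)-E_z(s))$, the damping $kM_\alpha z_t$ and the nonconservative $\partial_t z$ give positive dissipative contributions $k\int\|z_t\|_{L^2_\alpha}^2$ and $\int\|z_t\|^2$, the cross term $U\langle \partial_{x_1}z, z_t\rangle$ is split via Young followed by Ehrling into an absorbable $\epsilon$-fraction of $E_z$ plus the compact lower-order piece $C_\epsilon\|z\|_{2-\delta}^2$, and the memory term is bounded in duality, $|\langle q(z^\tau),z_t\rangle| \le \|q(z^\tau)\|_{-1}\,\|z_t\|_{L^2_\alpha}$ (using $\alpha>0$, whence $L^2_\alpha \sim H^1$), combined with the $H^{-1}$-estimate \eqref{qnegest} $\|q(z^\tau)\|_{-1}^2 \le ct^*\int_{\tau-t^*}^\tau\|z\|_1^2\,d\sigma$ and a further Ehrling split to isolate $\|z\|_{2-\delta}^2$. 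The nonlinear pairing $\int_s^t\langle f_v(u^1)-f_v(u^2), z_t\rangle\,d\tau$ is retained as written on the right.

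Next I would apply the equipartition multiplier $z$: integrating the resulting identity
\begin{equation*}
\tfrac{d}{dt}\langle M_\alpha z_t, z\rangle - \|z_t\|_{L^2_\alpha}^2 + \|\Delta z\|^2 = (\text{cross, memory, and nonlinear terms})
\end{equation*}
over $[s,t]$ and bounding $|\langle M_\alpha z_t, z\rangle|$ by $E_z$ at the endpoints (using the a priori bound \eqref{bnd-R}) controls $\int_s^t\|\Delta z\|^2 d\tau$ in terms of $E_z(s)+E_z(t)$, the dissipation integral $\int\|z_t\|_{L^2_\alpha}^2$, compact lower-order terms, and the nonlinear pairing with $z$. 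Summing with the energy identity, using $\|z_t\|_{L^2_\alpha}^2 \sim \|z_t\|_1^2$, and absorbing all $\epsilon$-terms on the left yields \eqref{zenergyprelim-3}. For \eqref{enest1}, I would fix $t=T$ in \eqref{zenergyprelim-3} and integrate in $s\in[0,T]$: the LHS produces $T\cdot E_z(T)$ together with a Fubini-reorganized double integral majorizing $(T/2)\int_{T-t^*}^T E_z\,d\tau$ for $T\ge 2t^*$, while the RHS collapses into $a_2(E_z(0)+\int_{-t^*}^0\|z\|_2^2\,d\tau)$ after the memory contributions pile up, the $C(T,R,\delta)\sup_{[0,T]}\|z\|_{2-\delta}^2$ remainder, and the two nonlinear terms $\int_0^T\int_s^T\langle f_v(u^1)-f_v(u^2), z_t\rangle\,d\tau\,ds$ together with $\int_0^T\langle f_v(u^1)-f_v(u^2),z_t\rangle\,d\tau$ coming from evaluating the $s=0$ case.

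The principal obstacle is taming the nonlocal memory $q(z^t)$ in a way compatible with the compactness required for quasi-stability: one must invoke \eqref{qnegest} at the $H^{-1}$ level (not the $L^2$ level of \eqref{qnegest3}) so that the remainder enters only through the compact norm $\|z\|_{2-\delta}^2$ rather than $\|z\|_2^2$; the lone exception is the history window $\int_{-t^*}^0\|z\|_2^2\,d\tau$ appearing in \eqref{enest1}, which is acceptable since it corresponds to pure initial data. A secondary technical point is that the constant $C(T,R,\delta)$ absorbs the $R$-dependence from the local Lipschitz bound \eqref{airy-lip} for $f_v$, which controls the nonlinear difference at subcritical norms whenever Ehrling is applied to it; this dependence is harmless because the quasi-stability framework of Theorem \ref{doy} is ultimately applied on the absorbing ball where $R$ is fixed.
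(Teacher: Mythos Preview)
Your proposal is correct and follows precisely the standard energy-multiplier strategy the paper invokes: the paper does not prove this lemma but simply states that ``the standard observability and energy inequalities \ldots\ follow from energy methods developed for the wave equation,'' with details in \cite{delay} for the case $\alpha=0$. Your outline---pairing the $z$-equation with $z_t$ for the energy identity, with $z$ for equipartition, handling $q(z^t)$ via the $H^{-1}$ bound \eqref{qnegest} and Ehrling, then integrating \eqref{zenergyprelim-3} in $s\in[0,T]$ and feeding the $s=0$ instance back in to absorb $\int_0^T E_z(s)\,ds$---is exactly that argument adapted to $\alpha>0$.
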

We note the elementary bound
$$|\langle f(u^1)-f(u^2),z_t\rangle | \le C_\epsilon ||f(u^1)-f(u^2)||_{-\delta}^2+\epsilon||z_t||^2_{\delta}.$$
Utilizing the above bound directly, and invoking the locally Lipschitz nature of the von Karman nonlinearity in \eqref{airy-lip}, we see (rescaling constants) that
\begin{equation}\label{diffest}
\int_s^t\big|\langle f(u^1)-f(u^2) , z_t \rangle\big| d\tau \le \epsilon \int_s^t||z_t||_1^2+C(\epsilon,R,\delta,|t-s|)\sup_{[s,t]}||z(\tau)||^2_{2-\delta}.
\end{equation} 

This yields the estimate from which the quasi-stability property of $(T_t,\mathbf H)$ can be deduced.
\begin{lemma}\label{prequasi}
	Suppose $z=u^1-u^2$ as before, with $y^i(t)=(u^i(t),u_t(t)^i,u^{t,i})$ and $y^i(t) \in \mathscr B$ (i.e., the trajectories lie in the absorbing ball) for all $t\ge 0$. Also, let $\delta >0$ and $\Ez(t)$ be defined as  above.
	Then there exists a time $T$ such that the following estimate holds:
	\begin{equation}\label{prequasi*}
	E_{z}(T)+\int_{T-t^*}^{T}||z(\tau)||_2^2 d\tau\leq \beta \big(E_z(0)+\int_{-t^*}^0||z(\tau)||_2^2d\tau)+C(R,T,t^*,\delta)\underset{\tau \in \lbrack0,T]}{\sup }||z(\tau )||_{2-\delta }^2
	\end{equation}
	with $\beta<1$.
\end{lemma}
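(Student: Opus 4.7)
The plan is to combine the two a priori energy estimates of Lemma \ref{le:observbl} with the nonlinear bound (\ref{diffest}), carefully tracking how the $T$-factors and the free small parameter $\epsilon$ enter the constants, and then push $T$ to be large while $\epsilon$ is taken small to achieve $\beta<1$.

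First I would start from the ``observability'' inequality (\ref{enest1}), which carries a factor of $T/2$ on the left-hand side. The two nonlinear running integrals on the right I would estimate using (\ref{diffest}) with two small parameters $\epsilon_1,\epsilon_2>0$: the single integral produces a term $a_4\epsilon_1\int_0^T\|z_t\|_1^2\,d\tau$, while the double integral, after switching the order of integration, produces at most $a_3\epsilon_2 T\int_0^T\|z_t\|_1^2\,d\tau$; the remainders collapse into $C(\epsilon_1,\epsilon_2,T,R,\delta)\sup_{[0,T]}\|z\|_{2-\delta}^2$. Using the equivalence $\|z_t\|_1^2\sim\|z_t\|_{L^2_\alpha}^2\le E_z$, both contributions are controlled by $\int_0^T E_z(\tau)\,d\tau$.

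The decisive step is then bounding $\int_0^T E_z\,d\tau$ by the \emph{initial} data plus the compact lower-order term, so that no feedback to $E_z(T)+\int_{T-t^*}^T E_z\,d\tau$ occurs. For this I would invoke the other estimate (\ref{zenergyprelim-3}) on $[0,t]$ with $s=0$, apply (\ref{diffest}) again to its nonlinear term with a small parameter, and absorb $\epsilon\int_0^t\|z_t\|_1^2\,d\tau$ into the left-hand integral $\int_0^t E_z\,d\tau$. This yields
\begin{equation*}
\int_0^T E_z(\tau)\,d\tau \le C_0\Bigl(E_z(0)+\int_{-t^*}^0\|z(\tau)\|_2^2\,d\tau\Bigr)+C_1(T,R,\delta)\sup_{[0,T]}\|z\|_{2-\delta}^2,
\end{equation*}
with $C_0$ independent of $T$ (coming from the constants $a_0,a_1$ of Lemma \ref{le:observbl}). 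Plugging this back into the estimate from the first paragraph and dividing (\ref{enest1}) through by $T/2$ produces
\begin{equation*}
E_z(T)+\int_{T-t^*}^T E_z\,d\tau \le \Bigl(\tfrac{2a_2}{T}+\tfrac{K_1\epsilon_1}{T}+K_2\epsilon_2\Bigr)\Bigl(E_z(0)+\int_{-t^*}^0\|z\|_2^2\,d\tau\Bigr)+\tilde C(T,R,\delta,\epsilon_1,\epsilon_2)\sup_{[0,T]}\|z\|_{2-\delta}^2,
\end{equation*}
where $K_1,K_2$ depend only on $a_0,a_3,a_4,R$ and the equivalence constant for $L^2_\alpha$ versus $H_0^1$.

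At this point $\beta$ is tunable: fix $\epsilon_2$ so that $K_2\epsilon_2<1/3$; then fix $T$ large so that $2a_2/T<1/3$; finally choose $\epsilon_1$ so that $K_1\epsilon_1/T<1/3$. This gives $\beta\le 1$ minus a positive gap, and the dependence $\tilde C=\tilde C(R,T,t^*,\delta)$ is absorbed into the compactness constant. Dropping the $\|z_t\|_{L^2_\alpha}^2$ contribution on the left to obtain $\|z\|_2^2$ in the integral term, via $\|z\|_2^2\le CE_z$, yields exactly (\ref{prequasi*}). The main obstacle I anticipate is this bookkeeping of the order of choices; in particular one must split the nonlinear contributions into an $\epsilon_2$-piece (which does \emph{not} improve with $T$, so must be made small independently) and an $\epsilon_1/T$-piece (which can tolerate moderate $\epsilon_1$ once $T$ is large). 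All other steps are standard multiplier and absorption estimates.
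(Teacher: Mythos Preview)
Your proposal is correct and follows essentially the same strategy as the paper: both combine the two preliminary estimates of Lemma \ref{le:observbl} with the nonlinear bound (\ref{diffest}) and exploit the $T/2$ factor in (\ref{enest1}) to drive $\beta$ below $1$. The only organizational difference is that the paper \emph{adds} the two estimates (\ref{zenergyprelim-3}) and (\ref{enest1}) (after applying (\ref{diffest}) to each with a single $\epsilon$) and uses the $\int_0^T E_z\,d\tau$ appearing on the left of (\ref{zenergyprelim-3}) to directly absorb the $\epsilon(a_1+CT)\int_0^T\|z_t\|_1^2$ contribution, whereas you first use (\ref{zenergyprelim-3}) to bound $\int_0^T E_z$ by initial data (with a $T$-independent constant) and then \emph{substitute} into (\ref{enest1}); this leads you to distinguish $\epsilon_1,\epsilon_2$ and to your explicit order of parameter choices, but the underlying absorption is the same.
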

\begin{proof}[Proof of Lemma \ref{prequasi}]
Applying \eqref{diffest} to \eqref{zenergyprelim-3} and \eqref{enest1}  with $s=0$ and $t=T$, we obtain

\begin{align}\label{enest2}
\Ez(T)+\int_0^T E_z d\tau \le& ~ a_0\left(\Ez(0)+\int_{-t^*}^0 ||z(\tau)||_{2-\eta}^2 d\tau\right)
+C(T,R,\eta,\epsilon)\sup_{\tau \in [s,t]}||z||^2_{2-\eta}
\\\nonumber &+\epsilon a_1\int_0^T \|z_t(\tau)\|^2_1d\tau.
\end{align}
and
\begin{align}\label{enest3}
\frac{T}2 \Big[\Ez(T)+ \int_{T-t^*}^T \Ez(\tau) d\tau\Big] \le& ~ a_2\left(\Ez(0)+\int_{-t^*}^0 ||z(\tau)||_{2}^2 d\tau\right) +C(T,R,\eta,\epsilon)\sup_{\tau \in [0,T]}||z||^2_{2-\eta}
\\\nonumber &+C T\epsilon\int_0^T \|z_t(\tau)\|^2_1d\tau
\end{align}
for $T\geq \max\{t^*,1\}$.
After adding  \eqref{enest2} and \eqref{enest3} and invoking the Sobolev embeddings, we can drop suitable terms to obtain
\begin{align}\label{enest5}
\frac{T}2 \Ez(T)&+\left[\left\{\alpha c_p -\epsilon\left(a_1+CT\right)\right\}\int_0^T \|z_t(\tau)\|^2_1d\tau\right]\\ \nonumber&+ \frac{cT}2\int_{T-t^*}^T \|z(\tau)\|_2^2 d\tau\leq A\left(\Ez(0)+\int_{-t^*}^0 ||z(\tau)||_{2-\delta}^2 d\tau\right)+C(T,R,\delta,\epsilon)\sup_{\tau \in [0,T]}||z||^2_{2-\delta}
\end{align}
where $0<c<1$ and  $c_p$ is a Poincare constant. Scaling $\epsilon$ small enough, and $T$ large enough, we obtain after simplifying
\begin{equation}\label{prequasi**}
E_{z}(T)+\int_{T-t^*}^{T}||z(\tau)||_2^2 d\tau\leq \beta \Big(E_z(0)+\int_{-t^*}^0||z(\tau)||_2^2d\tau\Big)+C(R,T,t^*,\delta)\underset{\tau \in \lbrack0,T]}{\sup }||z(\tau )||_{2-\delta }^2
\end{equation}
with $\beta<1$.
\end{proof}

We note that the necessary  Lipschitz estimate in \eqref{specquasi} holds by Lemma \ref{l:lip}.
Via the semigroup property for the evolution $(T_t,\mathbf H)$ (iterating on intervals of size $T$), we obtain from \eqref{prequasi**} in a standard way the quasi-stability estimate in \eqref{specquasi*}. Hence the dynamical system $(T_t,\mathbf H)$ is {\em quasi-stable} on the absorbing ball $\mathscr B$ in the sense of Theorem \ref{doy}.

\subsection{Attractor and Its Properties}
We conclude this section by pointing out the existence and discussing the regularity of the global attractor for the decoupled plate sysytem. We first observe that Theorem~\ref{doy} applies to the dynamial system $(T_t,\mathbf H)$. Hence, we have existence of the global attractor $\mathbf A\subset\subset \mathbf H$ for the system $(T_t,\mathbf H)$ and the estimate:
\begin{equation}
\sup_{t \in \mathbb R} \left\{\|u_t(t)\|^2_{H_0^2(\Omega)}+\|u_{tt}(t)\|_{H_0^1(\Omega)}^2\right\} \le C.
\end{equation}
Consequently, $u_t(t)\in H_0^2(\Omega)$ and $u_{tt}(t)\in H_0^1(\Omega)$ for each $t\in \mathbb R$. Applying elliptic regularity to the equation $$\Delta^2u = -M_{\alpha}u_{tt}+kM_{\alpha}u_t+[u,v(u)+F_0]+p_0-u_t-Uu_x-q(u^t) \in H^{-1}(\Omega)$$ point-wisedly in time, we obtain that $u(t)\in H^3(\Omega)\cap H_0^2(\Omega).$ Therefore, $\mathbf A$ possesses better regularity than $\mathbf H$ and we obtain the statement of Theorem~\ref{maintheorem}.
\section{Stabilization to Equilibria Set}
Finally, in this section, we show that in the subsonic case, when {\em any amount of damping is present} $k>0$, that trajectories stabilize to the equilibrium set $\mathcal N$ as in Section \ref{equilsec}. From an applied point of view, this means that for subsonic flows, physical panels do not experience aerodynamic instability. {\em Said differently: there is no subsonic panel flutter.} The main theorem proved in this section is presented below.
\begin{theorem}\label{unibound}
	Let $\alpha>0,$ $0\leq U<1,$ $k>0,$ and the assumptions of Theorem~\ref{rewrite} be in force. Then for any weak solution $(u(t);\phi(t))$, with flow initial data $\phi_0$ and $\phi_1$ localized to $K_{\rho}$, we have
	\begin{equation}
	\lim_{t\rightarrow\infty}\inf_{\{\bar{u};\bar{\phi}\}\in\mathcal{N}}\Big\{||u(t)-\bar{u}(t)||_{2}^2+ ||u_t(t)||_{2}^2+||\phi(t)-\bar{\phi}(t)||_{H^1( K_{\tilde\rho} )}^2+||\phi_t(t)||_{L^2( K_{\tilde\rho} )}^2\Big\}=0
	\end{equation}
	for any $\tilde \rho>0$.
\end{theorem}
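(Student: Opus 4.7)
The plan is to decouple the analysis into a plate step and a flow step, reflecting the hybrid nature of the system. The plate step uses dynamical-systems machinery, namely the compact global attractor from Theorem~\ref{maintheorem} together with the finite dissipation integral from Corollary~\ref{dissint}, to identify every $\omega$-limit point of the reduced dynamics with a stationary solution. The flow step then lifts this convergence to $\phi$ by decomposing $\phi = \phi^{*} + \phi^{**}$ as in Section~4: the part $\phi^{*}$ decays locally by Huygens' principle, as recorded in \eqref{starstable}, while $\phi^{**}$ depends continuously on the plate history through the Kirchhoff-type representation \eqref{phidef}.

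For the plate step, after waiting the time $t^{\#}$ of Theorem~\ref{rewrite}, I would pass to the reduced dynamics $(T_t,\mathbf H)$ and study the $\omega$-limit set $\omega(x_0)\subset\mathbf A$, which is nonempty, compact, and fully invariant. For any $\bar x(0)\in\omega(x_0)$, pick $t_n\to\infty$ with $T_{t_n}x_0\to \bar x(0)$ in $\mathbf H$; Lemma~\ref{l:lip} and compactness of $\mathbf A$ give $T_{t_n+s}x_0\to\bar x(s)$ for every $s\in\mathbb R$ along a full orbit inside $\mathbf A$. Corollary~\ref{dissint} together with Fatou's lemma then yields, for any $[a,b]\subset\mathbb R$,
\begin{equation*}
\int_a^b\|\bar u_t(s)\|^2_{L^2_\alpha(\Omega)}\,ds \;\leq\; \liminf_{n\to\infty}\int_{t_n+a}^{t_n+b}\|u_t(\tau)\|^2_{L^2_\alpha(\Omega)}\,d\tau \;=\; 0.
\end{equation*}
Hence $\bar u_t\equiv 0$, so $\bar u$ is time-independent; inserting $u_t\equiv u_{tt}\equiv 0$ into \eqref{reducedplate} identifies $\bar u$ with an equilibrium of the reduced dynamics, which, via the reduction identity \eqref{delayt}--\eqref{qyou} specialized to stationary data, matches the plate component of some $(\bar u,\bar\phi)\in\mathcal N$. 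Standard attraction together with the higher regularity of $\mathbf A$ from Theorem~\ref{maintheorem} then delivers
\begin{equation*}
\inf_{(\bar u,\bar\phi)\in\mathcal N}\big\{\|u(t)-\bar u\|_{2}^2+\|u_t(t)\|_{2}^2\big\}\longrightarrow 0.
\end{equation*}

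For the flow step, the $\phi^{*}$ piece is handled directly by \eqref{starstable} on any $K_{\tilde\rho}$. For $\phi^{**}$, I would work along an arbitrary $t_n\to\infty$ and use the plate analysis plus compactness of $\mathbf A$ in $\mathbf H$ to extract a subsequence along which the translated plate history $u(t_n+\cdot)$ converges, uniformly on $s\in[-t^{*},0]$ in $H^{2}_0(\Omega)$, to a constant $\bar u$, while $u_t(t_n+\cdot)\to 0$ in the corresponding norm. Plugging this into the explicit formulas \eqref{phidef}, \eqref{phitest}, \eqref{partialder} and using the regularity estimate Lemma~\ref{compact2} to justify passage to the limit inside the Kirchhoff integrals, one obtains $\phi^{**}(t_n)\to\bar\phi^{**}$ and $\phi^{**}_t(t_n)\to 0$ in $H^1(K_{\tilde\rho})\times L^2(K_{\tilde\rho})$, with $\bar\phi^{**}$ the stationary flow determined by $\bar u$. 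Since the extraction was arbitrary, the infimum over $\mathcal N$ in the statement of Theorem~\ref{unibound} converges to zero.

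The main obstacle I expect is the uniform convergence of the translated plate trajectory on the full delay window $[-t^{*},0]$, since this is precisely what licenses passing to the limit inside the Kirchhoff representation of $\phi^{**}$. This forces one to work in the memory space $\mathbf H$ (where $u^t$ is part of the state) rather than merely the projected plate attractor $\mathscr U$, and to use the Lipschitz estimate of Lemma~\ref{l:lip} together with the fact that $\omega(x_0)$ consists of entire bounded orbits. A secondary point is the identification of equilibria of the reduced equation with the plate component of $\mathcal N$, which reduces to verifying that, for $u_t\equiv 0$, the delay potential $q(\bar u)$ reproduces the stationary boundary pressure $U\,r_{\Omega}\gamma[\partial_{x_3}\bar\phi]$ of the original coupling.
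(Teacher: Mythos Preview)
Your overall architecture---plate step via the compact attractor plus the dissipation integral, then lifting to the flow via $\phi=\phi^*+\phi^{**}$ and the Kirchhoff representation---matches the paper's. The substantive difference is in how the plate convergence is extracted. The paper argues directly on the trajectory: Barbalat's lemma (uniform boundedness of $t\mapsto \langle M_{\alpha} u_{tt},w\rangle$ together with Corollary~\ref{dissint}) gives $M_{\alpha} u_t(t)\to 0$ in $H^{-1}(\Omega)$, which, combined with subsequential compactness from Corollary~\ref{th:main2*}, yields $\|u_t(t)\|_1\to 0$ along the \emph{full} trajectory. Your route through the $\omega$-limit set and Fatou is cleaner and avoids Barbalat; however, it delivers the statement ``$\omega(x_0)$ consists of equilibria'' rather than the pointwise decay $u_t(t)\to 0$, and it is precisely the latter that the paper uses downstream to obtain the key uniform convergence $\max_{\tau\in[-a,a]}\|u(t_n+\tau)-\bar u\|_2\to 0$ (via the bound $\|u(t_n+\tau)-u(t_n)\|_1\le\int_{t_n}^{t_n+\tau}\|u_t\|_1$, interpolation, and a short contradiction argument). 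Your proposed path to this same uniform convergence---compactness of $\mathbf A$ in $\mathbf H$ plus Lemma~\ref{l:lip}---needs more than you wrote: convergence of the history component in $\mathbf H$ is only in $L^2(-t^*,0;H_0^2)$, not uniform, and forward Lipschitz propagation does not reach $s<0$; one must shift the base time back by $t^*$, re-extract, and identify limits. Finally, the paper identifies the limit with an element of $\mathcal N$ by passing to the limit directly in the full weak formulation of both the plate and the flow equations, rather than by matching $q(\bar u)$ to the static Neumann-to-Dirichlet pressure as you propose; both routes are viable, but the paper's sidesteps the correspondence you flag as your secondary obstacle.
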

This theorem is discussed concisely in \cite{springer,chuey}. 

By the {\em generic finiteness} of the stationary set discussed in Section \ref{equilsec}, for {\em most} loads $p_0$ and $F_0$, the set $\mathcal N$ is finite. In which case, the equilibria set corresponding to Theorem \ref{statictheorem} is discrete and isolated. When this occurs, we can improve the result above. 
\begin{corollary}\label{improve} Assume that $\mathcal N$ is an isolated set.	
	Let the hypotheses of Theorem \ref{unibound} (strong convegence) be in force; then for any generalized solution $(u,\phi)$ to \eqref{flowplate} there exists a stationary point $(\bar{u}, \bar{\phi})$ satisfying \eqref{static} such that
	\begin{align*}\lim_{t \to \infty} \left\{\|u(t)-\bar{u}\|^2_{2}+\|u_t(t)\|^2+\|\phi(t)-\bar{\phi}\|_{H^1( K_{\tilde \rho} )}^2+\|\phi_t(t)\|^2_{L^2( K_{\tilde \rho} )} \right\}=0, 
	\end{align*}
	for any $\tilde \rho>0.$
	
\end{corollary}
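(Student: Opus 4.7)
The plan is to upgrade the convergence-to-set established in Theorem~\ref{unibound} to convergence to a single equilibrium point, by exploiting the discreteness of an isolated $\mathcal{N}$ and the time-continuity of the trajectory in the ambient topology.

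First, I would note that since $\mathcal{N}$ lies in the compact plate attractor $\mathbf{A}$ and is assumed isolated, it is in fact finite; write $\mathcal{N} = \{(\bar u_j, \bar \phi_j)\}_{j=1}^{K}$. Define
\[
d_j(t)^2 := \|u(t)-\bar u_j\|_2^2 + \|u_t(t)\|_2^2 + \|\phi(t)-\bar\phi_j\|_{H^1(K_{\tilde\rho})}^2 + \|\phi_t(t)\|_{L^2(K_{\tilde\rho})}^2,
\]
and set $2\delta_0 := \min_{i\ne j}\big[\|\bar u_i - \bar u_j\|_2 + \|\bar\phi_i - \bar\phi_j\|_{H^1(K_{\tilde\rho})}\big]$, which is strictly positive by the isolation hypothesis. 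Theorem~\ref{unibound} gives $\Psi(t):= \min_j d_j(t) \to 0$, so there exists $T_0$ with $\Psi(t) < \delta_0$ for all $t \ge T_0$. The triangle inequality, applied to the pairwise separation of the stationary points, then guarantees a unique minimizing index $j(t)$ with $d_{j(t)}(t) < \delta_0$ and $d_i(t) > \delta_0$ for all $i \ne j(t)$.

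Next, I would use time-continuity to conclude that $j(t)$ is eventually constant. The plate component $t \mapsto (u(t),u_t(t))$ is continuous into $H_0^2(\Omega) \times H_0^2(\Omega)$ once the trajectory has entered a neighborhood of $\mathbf{A}$, owing to the attractor regularity $u_t \in H_0^2(\Omega)$ and the uniform bound on $u_{tt} \in H_0^1(\Omega)$ from Theorem~\ref{maintheorem}. The flow component $t \mapsto (\phi(t), \phi_t(t))$ is continuous into $W_1(\mathbb R^3_+) \times L^2(\mathbb R^3_+)$, hence into the local restriction $H^1(K_{\tilde\rho}) \times L^2(K_{\tilde\rho})$. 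Thus $t \mapsto (d_1(t),\ldots,d_K(t))$ is continuous on $[T_0,\infty)$, and the integer-valued, continuous function $j(t)$ must be constant there; call this value $j_*$. Combined with $\Psi(t) \to 0$, this yields $d_{j_*}(t) \to 0$, which is the desired conclusion.

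The main obstacle I anticipate is rigorously justifying time-continuity of the plate trajectory in the strengthened $H_0^2 \times H_0^2$ topology required by the statement of Theorem~\ref{unibound}, since the native semigroup framework of Corollary~\ref{solutions} only guarantees continuity into $H_0^2(\Omega) \times L^2_\alpha(\Omega)$. My route would be to wait until the trajectory enters a small neighborhood of $\mathbf{A}$, where the higher regularity of Theorem~\ref{maintheorem} together with the uniform bound on $u_{tt}$ yields equicontinuity in the stronger norm by an Arzel\`a--Ascoli argument. For the flow, local continuity on $K_{\tilde\rho}$ follows from the decomposition $\phi=\phi^*+\phi^{**}$ of Section~4: $\phi^*$ decays to zero locally by Huygens' principle, while $\phi^{**}$ inherits its continuity from $u$ via the Kirchhoff representation~\eqref{phidef}.
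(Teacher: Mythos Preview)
Your approach is correct and is precisely the standard connectedness argument the paper invokes implicitly; the paper gives no detailed proof of this corollary, remarking only at the end of Section~7.4 that when $\mathcal N$ is isolated the conclusion of Theorem~\ref{unibound} ``collapses'' to Corollary~\ref{improve}.

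One simplification: your concern about time-continuity in the strengthened $H_0^2\times H_0^2$ plate topology is unnecessary. Since the stationary flow equation is linear with Neumann data $U\partial_{x_1}\bar u$, distinct elements of $\mathcal N$ have distinct plate components $\bar u_j\in H_0^2(\Omega)$. Hence you may run the entire index-selection argument using only $t\mapsto u(t)$, which is continuous into $H_0^2(\Omega)$ directly from the semigroup in Corollary~\ref{solutions}: set $2\delta_0=\min_{i\ne j}\|\bar u_i-\bar u_j\|_2$, use Theorem~\ref{unibound} to get $\min_j\|u(t)-\bar u_j\|_2<\delta_0$ for $t\ge T_0$, and conclude that the minimizing index is constant by continuity of $t\mapsto \|u(t)-\bar u_j\|_2$. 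The convergence of the remaining components $(u_t,\phi,\phi_t)$ then follows immediately from Theorem~\ref{unibound} once $j_*$ is fixed, with no need to invoke attractor regularity or an Arzel\`a--Ascoli argument. Note also that the corollary as stated only requires $\|u_t(t)\|$ (the $L^2$ norm), not $\|u_t(t)\|_2$, so even the velocity continuity you worry about is not at issue.
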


The proof of this theorem proceeds in steps, utilizing first the finiteness of the dissipation integral in Lemma \ref{dissint} (from uniform boundedness of trajectories in the norm $Y$ in Lemma \ref{globalbound}). These facts along with the compactness of the attractor for $(T_t,\mathbf H)$ allow us to conclude strong convergences for the plate. We then ``transfer" these strong convergences to the flow via the Neumann lift corresponding to the $\phi^{**}$ in Theorem \ref{flowformula} and the resulting Neumann-to-Dirichlet expression for the material flow derivative \eqref{potential*}.

\subsection{Step 1: Plate Convergences}
\begin{proposition}\label{Hneg} Let $u$ be a generalized solution to \eqref{flowplate}. Then $M_{\alpha}u(t) \to 0$ in $H^{-1}(\Omega)$ as $t \to \infty$.\end{proposition}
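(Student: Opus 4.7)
The claim is equivalent to $u(t) \to 0$ strongly in $H_0^1(\Omega)$, since the operator $M_\alpha = 1-\alpha\Delta$ restricted to $H_0^1(\Omega)$ is a topological isomorphism onto $H^{-1}(\Omega)$. The plan is to anchor the proof in the finite dissipation integral from Corollary~\ref{dissint}. Under $k>0$, the integral $\int_0^\infty \|u_t(\tau)\|_{L^2_\alpha(\Omega)}^2\, d\tau$ is finite, and since $\|\cdot\|_{L^2_\alpha(\Omega)}^2 = \|\cdot\|^2 + \alpha\|\nabla\cdot\|^2$ is equivalent to the $H_0^1$-norm, this rewrites as $u_t \in L^2(0,\infty; H_0^1(\Omega))$. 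Interpreting $t \mapsto M_\alpha u(t)$ as an $H^{-1}$-valued map with derivative $M_\alpha u_t \in L^2(0,\infty; H^{-1}(\Omega))$, I would first use Cauchy--Schwarz to control the oscillation of $M_\alpha u$ over unit-length windows:
\[
\|M_\alpha u(t+1) - M_\alpha u(t)\|_{H^{-1}(\Omega)} \le \Big(\int_t^{t+1} \|u_s\|_{L^2_\alpha(\Omega)}^2\, ds\Big)^{1/2} \xrightarrow{t\to\infty} 0.
\]
Hence $M_\alpha u$ is uniformly equicontinuous in $H^{-1}$ with vanishing tail oscillation, so it suffices to show that $M_\alpha u(t_n) \to 0$ in $H^{-1}$ along some sequence $t_n \to \infty$.

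Next I would extract a limit along $t_n \to \infty$ using the compactness of the attractor from Theorem~\ref{maintheorem}: the orbit $\{u(t)\}_{t \ge 0}$ is precompact in $H_0^2(\Omega)$, so a subsequence $u(t_n) \to \bar u$ strongly in $H^2_0(\Omega)$. Passing to the limit in the reduced plate equation \eqref{reducedplate}, and using the dissipation of $u_t$ together with the compactness estimate in Lemma~\ref{compact2} and the representation \eqref{potential*} for $q(u^t)$, one identifies $\bar u$ as a stationary solution in the sense of Theorem~\ref{statictheorem}. The uniform equicontinuity from the previous step then upgrades subsequential convergence to a single cluster value in $H^{-1}$.

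The main obstacle is the final identification of this cluster value as zero. In general the equilibrium set $\mathcal N$ of Section~\ref{equilsec} contains nonzero elements, so additional information coming from the full flow-plate coupling must be extracted to rule these out. The closing argument I would pursue uses the Neumann-to-Dirichlet formula \eqref{delayt} together with the finiteness of the dissipation integral to force the pairing $\langle M_\alpha u(t),\psi\rangle$ against arbitrary $\psi \in H_0^1(\Omega)$ to collapse: one tests \eqref{reducedplate} against a well-chosen $\psi$, integrates over a window $[t, t+T^*]$, and exploits the fact that the non-conservative memory term $q(u^t)$ carries long-range information from the flow history via \eqref{potential*}, which combined with $u_t \to 0$ in a time-averaged sense in $H_0^1$ compels the position component to decay in $H_0^1$ rather than stabilize to a nontrivial equilibrium. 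The hard part will be converting the compactness of the plate-to-flow Neumann lift into \emph{strong} convergence of the position to zero rather than to an arbitrary element of $\mathcal N$; this will likely require a cancellation argument peculiar to the subsonic coupling and may need the explicit Hardy-type controls from Proposition~\ref{littleest} to push through.
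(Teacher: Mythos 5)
There is a typo in the statement you were given: the claim the paper actually proves---and the one invoked downstream---is that $M_\alpha u_t(t) \to 0$ in $H^{-1}(\Omega)$, i.e., the \emph{velocity}, not the position. You can confirm this from the conclusion of the paper's own proof, equation \eqref{barb}, which reads $\lim_{t\to\infty}\langle M_\alpha u_t(t), w\rangle = 0$ for all $w\in H_0^1(\Omega)$, and from the sentence immediately preceding Proposition \ref{u_t}, which cites ``the fact in Proposition \ref{Hneg} that $M_\alpha u_t(t)\to 0$ in $H^{-1}(\Omega)$.'' Your reading---that $M_\alpha\colon H_0^1(\Omega)\to H^{-1}(\Omega)$ is an isomorphism so the literal statement would force $u(t)\to 0$ in $H_0^1(\Omega)$---is correct, and so is your instinct that the literal statement cannot hold: the stationary set $\mathcal N$ of Section \ref{equilsec} is generally nontrivial (the paper exhibits an $F_0$ producing three equilibria), and a trajectory started near a nonzero equilibrium will not converge to $0$. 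The ``hard part'' you flag in your last paragraph is therefore not a gap to be filled but a genuine impossibility; your proposal aims at a false target. (A secondary slip: even granting the target, vanishing unit-window oscillation plus convergence along a subsequence does not give convergence of the whole trajectory---$g(t)=\sin(\log t)$ has $g(t+1)-g(t)\to 0$ and $g(e^{2\pi n})=0$ yet does not converge.)

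The intended claim has an elementary proof that invokes neither the attractor nor the Neumann lift. Test the plate equation in \eqref{flowplate} against $w\in C_0^\infty(\Omega)$. Using the reduction \eqref{delayt} with the estimates on $q(u^t)$ in Proposition~\ref{pr:q}, the local Lipschitz property \eqref{airy-lip} of $f_v$, and the global-in-time bound of Lemma~\ref{globalbound}, one shows $\langle M_\alpha u_{tt}(t),w\rangle = \partial_t\langle M_\alpha u_t(t), w\rangle$ is uniformly bounded in $t$. Corollary~\ref{dissint} gives $\int_0^\infty|\langle M_\alpha u_t(\tau),w\rangle|^2\,d\tau<\infty$. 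A scalar function in $L^2(0,\infty)$ whose derivative is uniformly bounded must vanish at infinity (Barbalat), so $\langle M_\alpha u_t(t),w\rangle\to 0$; density of $C_0^\infty(\Omega)$ in $H_0^1(\Omega)$ then finishes. If you retool your proposal for $u_t$, the ingredient you are missing beyond the dissipation integral is exactly this uniform bound on $u_{tt}$, which is obtained by reading it off from the equation rather than from a Cauchy--Schwarz oscillation estimate.
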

\begin{proof}[Proof of Proposition \ref{Hneg}]
Multiplying the plate equation in (\ref{flowplate}) by a test function $w\in C_0^{\infty}(\Omega)$, we obtain
\begin{equation}\label{longone}
\langle M_{\alpha}u_{tt},w\rangle=-\langle\Delta u,\Delta w\rangle+\langle p_0,w\rangle-k\langle M_{\alpha}u_t,w\rangle+\langle[u(t),v+F_0],w\rangle+\langle r_{\Omega}\big[\gamma(\phi_t+U\phi_x)\big],w\rangle.
\end{equation}
The first and second terms on the RHS of the equality are uniformly bounded in time by Lemma~\ref{globalbound}. 

For the third term, we have
\begin{align}\label{vanilla}
\begin{split}
|k\langle M_{\alpha}u_t,w\rangle|&\leq k(||u_t||\cdot||w||) + k\alpha(||\nabla u_t||\cdot||\nabla w||).
\end{split}
\end{align}
For the fourth term, Theorem $1.4.3$ and Corollary $1.4.5$ from \cite{springer} provides us with the following estimates for $f_v$:
\begin{align}\label{vanilla1}
\begin{split}
|\langle[u,v+F_0],w\rangle|&\leq \|[u,v]\|\|w\|+\|[u,F_0]\|_{-1}\|w\|_1\\
&\leq C_1\|u\|_2^3\|w\|+C_2\|u\|_2\|F_0\|_2\|w\|_1
\end{split}
\end{align}
To estimate the fifth term above in \eqref{longone}, recall that
\[r_{\Omega}\left(\partial_t+U\partial_{x_1}\right)\gamma\left[\phi(t)\right]=-(\partial_t+U\partial_{x_1})u(t)-q(u^t),
\]
for $t\geq t^*$, where $t^*$ and $q(u^t)$ are defined as in Theorem~\ref{rewrite}.
Also,
\begin{align}\label{vanilla2}
\begin{split}
\left|\left\langle r_{\Omega}\big[\gamma(\phi_t(t)+U\phi_x(t))\big],w\right\rangle\right|&\leq \|r_{\Omega}\big[\gamma(\phi_t(t)+U\phi_x(t))\big]\|\cdot\|w\|\\
&\leq\left(\|u_t(t)\|+U\|u_x(t)\|+\|q(u^t)\|\right)\|w\|\\
&\leq C\left(\|u_t(t)\|_{L^2_{\alpha}}+U\|u(t)\|_2\right)+ct^*\int_{t-t^*}^t||u(\tau)||_2d\tau \text~~{(by~ \eqref{qnegest3})}.
\end{split}
\end{align}

The right hand sides of  \eqref{vanilla}, \eqref{vanilla1} and \eqref{vanilla2} are then uniformly bounded in time by Lemma~\ref{globalbound}. Hence, by\eqref{longone},  $\left|(M_{\alpha}u_{tt},w)\right|=\left|\partial_t(M_{\alpha}u_{t},w)\right|$ is uniformly bounded in time for each $w\in C_0^{\infty}(\Omega)$.\\
Next we see that
\begin{align}\label{vogue}
\begin{split}
\int_0^{\infty}\Big|\langle M_{\alpha}u_t,w\rangle\Big|^2d\tau\ &\leq\int_0^{\infty}\Big|\langle u_t,w\rangle\Big|^2d\tau+\alpha\int_0^{\infty}\Big|\langle\nabla u_t,\nabla w\rangle\Big|^2d\tau\\
&\leq C(||w||_1)\int_0^{\infty}||u_t||_{1,\Omega}^2< \infty
\end{split}
\end{align}
by the finiteness of the dissipation integral given in Corollary~\ref{dissint}. 
Hence by Barbalat's lemma and density\footnote{Barbalat's Lemma : Suppose $f(t) \in C^1(a, \infty)$ and $\displaystyle\lim_{t\to \infty} f(t) =\alpha < \infty.$ If $f'(t)$ is uniformly continuous, then $\displaystyle\lim_{t\to\infty} f'(t) = 0.$ In our case, we take $f(t)=\int_0^{t}\Big|(M_{\alpha}u_t,w)\Big|^2d\tau$}, we obtain
\begin{equation}\label{barb}
\lim_{t\rightarrow\infty}\langle M_{\alpha}u_t(t),w\rangle=0,~\forall w \in H_0^1(\Omega).
\end{equation}
\end{proof}

\begin{proposition}\label{key2}
	Let $\mathscr{U}$ be as in Theorem~\ref{th:main2*} and $u$ be a generalized solution to \eqref{flowplate}. Then, given any sequence of moments $t_n \to \infty$, there is a subsequence $\{t_{n_l}\}$ and $(\bar{u},\Tilde{u})\in\mathscr U\subset H_0^2(\Omega)\times H_0^1(\Omega)$ (depending on $\{n_l\}$) such that
	\[
	\lim_{l\rightarrow \infty}||u(t_{n_l})-\bar{u} ||^2_{2}= \lim_{l\rightarrow \infty}||u_t(t_{n_l})-\Tilde{u} ||^2_{1}=0
	\] \end{proposition}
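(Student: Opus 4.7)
The plan is direct: combine the attraction property supplied by Corollary \ref{th:main2*} with the compactness of $\mathscr{U}$ in $H_0^2(\Omega)\times H_0^1(\Omega)$. First I would observe that since $L^2_\alpha(\Omega)$ is topologically equivalent to $H_0^1(\Omega)$, the semidistance $d_{Y_{pl}}$ controls precisely the $H^2\oplus H^1$ norm, and so Corollary \ref{th:main2*} yields
$$\lim_{t\to\infty}\inf_{(w_0,w_1)\in\mathscr{U}}\Big(\|u(t)-w_0\|_2^2+\|u_t(t)-w_1\|_1^2\Big)=0.$$

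Next, for each $n$, the functional $(w_0,w_1)\mapsto \|u(t_n)-w_0\|_2^2+\|u_t(t_n)-w_1\|_1^2$ is continuous on $\mathscr{U}$, which is compact in $H_0^2(\Omega)\times H_0^1(\Omega)$. Hence the infimum is attained at some pair $(\bar{u}_n,\tilde{u}_n)\in\mathscr{U}$ (alternatively one may take a minimizing sequence up to an error $1/n$), and the preceding display gives
$$\|u(t_n)-\bar{u}_n\|_2^2+\|u_t(t_n)-\tilde{u}_n\|_1^2\longrightarrow 0\quad\text{as }n\to\infty.$$

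Then I would extract a convergent subsequence: by compactness of $\mathscr{U}$ in $H_0^2\times H_0^1$, the sequence $\{(\bar{u}_n,\tilde{u}_n)\}\subset\mathscr{U}$ admits a subsequence $(\bar{u}_{n_l},\tilde{u}_{n_l})\to(\bar{u},\tilde{u})\in\mathscr{U}$ in $H_0^2\times H_0^1$. A two-term triangle inequality,
$$\|u(t_{n_l})-\bar{u}\|_2\le \|u(t_{n_l})-\bar{u}_{n_l}\|_2+\|\bar{u}_{n_l}-\bar{u}\|_2,$$
together with the analogous bound for $u_t$ in the $H_0^1$ norm, produces $\|u(t_{n_l})-\bar{u}\|_2\to 0$ and $\|u_t(t_{n_l})-\tilde{u}\|_1\to 0$, which is the claim.

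I do not anticipate a genuine obstacle here, since the hard work has already been absorbed into Corollary \ref{th:main2*}, which in turn rests on the quasi-stability machinery of Section 6. The only conceptual point is the standard translation from a ``semidistance to a compact set'' statement into convergence along a subsequence to a particular point of that set, exactly where sequential compactness of $\mathscr{U}$ enters. No further structure of the plate equation, of the flow, or of the dissipation integral is needed for this proposition; those tools are reserved for the subsequent steps that identify $(\bar{u},\tilde{u})$ with a stationary state and lift the convergence to the flow component.
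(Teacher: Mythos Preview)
Your proposal is correct and follows essentially the same route as the paper: for each $t_n$ find a (near-)minimizer in $\mathscr{U}$ realizing the semidistance, then use compactness of $\mathscr{U}$ to extract a convergent subsequence and conclude by the triangle inequality. The only cosmetic difference is that the paper explicitly runs a minimizing sequence in $\mathscr{U}$ for each fixed $t_n$ and passes to a subsequential limit to obtain the nearest point, whereas you invoke continuity of the distance functional on the compact set $\mathscr{U}$ directly; these are equivalent, and your presentation is in fact the cleaner of the two.
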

	\begin{proof}[Proof of Proposition \ref{key2}]
		Consider some $t_n\to\infty$ and define for $t_1$:  $$\displaystyle \inf_{(v_0;v_1)\in\mathscr{U}}||(u(t_1);u_t(t_1))-(v_0;v_1)||_{2,\Omega}^2=\rho.$$ Since $(u(t_1),u_t(t_1))\in Y_{pl}$
		and from Corollary\eqref{th:main2*} we have that
		\[
		\lim_{t\to\infty} d_{Y_{pl}} \big( (u(t),u_t(t));\mathscr U\big)=0,
		\]
		 there is a sequence $\{(v_{0,n}^1,v_{1,n}^1)\}\subset \mathscr{U}$ such that
		\[
		\lim_{n\rightarrow\infty}||(u(t_1),u_t(t_1))-(v_{0,n}^1,v_{1,n}^1)||_{Y_{pl}}=\rho.
		\]
		Since $\mathscr U$ is compact, there is a subsequence $\{(v_{0,n_k}^1;v_{1,n_k})\}$ so that
		$$ \displaystyle\lim_{k\rightarrow\infty}(v_{0,n_k}^1,v_{1,n_k})=(v_0^1,v_1^1)\in \mathscr{U}.
		$$  As a subsequence of a convergence sequence, we thus have
		\[
		||(u(t_1),u_t(t_1))-(v_{0}^1,v_{1}^1)||_{Y_{pl}}=\lim_{k\rightarrow\infty}||(u(t_1),u_t(t_1))-(v_{0,n_k}^1,v_{1,n_k}^1)||_{Y_{pl}}=\rho
		\]
		Iterating this procedure, to each $(u(t_n);u_t(t_n))\in{\mathscr{S}}$ we can find a $(v_0^n,v_1^n)\in\mathscr U$ such that
		\[
		d_{Y_{pl}}\Big\{(u(t_n),u_t(t_n)),\mathscr U\Big\}= ||(u(t_n),u_t(t_n))-(v_{0}^n,v_{1}^n)||_{Y_{pl}}
		\]
		Then, from this sequence $\{(v_{0}^n,v_{1}^n)\} \subset \mathscr U$, via compactness of $\mathscr U \subset Y_{pl}$, we  obtain a convergent subsequence and an element element $(v_{0},v_{1})\in \mathscr U$. Hence:
		\[
		\lim_{l\rightarrow\infty}||(u(t_{n_l}),u_t(t_{n_l}))-(v_{0},v_{1})||_{Y_{pl}}=\lim_{l\rightarrow\infty}d_{Y_{pl}}\Big\{(u(t_{n_l}),u_t(t_{n_l})),\mathscr U\Big\}=0.
		\]	
	\end{proof}
	Combining this strong subsequential convergence with the fact in Proposition \ref{Hneg} that ~$M_{\alpha}u_t(t) \to 0~\text{ in }~~H^{-1}(\Omega),$ we obtain immediately:
	\begin{proposition}\label{u_t}
		Let $u$ be a generalized solution to \eqref{flowplate}. Then $$\lim_{t \to \infty} ||u_t(t)||_1=\lim_{t \to \infty}||M_{\alpha}^{1/2}u_t|| =0.$$
	\end{proposition}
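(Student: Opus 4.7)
The plan is to combine Propositions~\ref{Hneg} and~\ref{key2} in a contradiction argument. First I would observe that, for $\alpha>0$, the identity $\|M_\alpha^{1/2} v\|^2 = \|v\|^2 + \alpha\|\nabla v\|^2$ exhibits $\|M_\alpha^{1/2}\cdot\|$ as equivalent to $\|\cdot\|_1$ on $H_0^1(\Omega)$, so it suffices to establish the single limit $\|u_t(t)\|_1 \to 0$; the convergence of $\|M_\alpha^{1/2}u_t(t)\|$ then follows.

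Suppose toward a contradiction that this fails. Then there exist $\epsilon>0$ and a sequence $t_n \to \infty$ with $\|u_t(t_n)\|_1 \geq \epsilon$ for all $n$. Applying Proposition~\ref{key2} to $\{t_n\}$ extracts a subsequence $\{t_{n_l}\}$ and an element $\tilde u \in H_0^1(\Omega)$ (the second component of a pair in $\mathscr U$) with $u_t(t_{n_l}) \to \tilde u$ strongly in $H_0^1(\Omega)$. Continuity of the $H^1$-norm under strong convergence gives $\|\tilde u\|_1 \geq \epsilon$, so $\tilde u \neq 0$. On the other hand, $M_\alpha = I - \alpha\Delta$ (read with Dirichlet data) is bounded $H_0^1(\Omega) \to H^{-1}(\Omega)$, so passing the strong convergence through this bounded operator yields $M_\alpha u_t(t_{n_l}) \to M_\alpha \tilde u$ in $H^{-1}(\Omega)$. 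Comparing with Proposition~\ref{Hneg}, which provides $M_\alpha u_t(t_{n_l}) \to 0$ in $H^{-1}(\Omega)$, uniqueness of limits forces $M_\alpha \tilde u = 0$ in $H^{-1}(\Omega)$. Since $M_\alpha$ is coercive on $H_0^1(\Omega)$, it is a topological isomorphism onto $H^{-1}(\Omega)$ by Lax--Milgram, so $\tilde u = 0$. This contradicts $\|\tilde u\|_1 \geq \epsilon$.

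I expect no substantive obstacle; the only point worth double-checking is the compatibility of the two limiting statements, namely that the \emph{specific} subsequence extracted in Proposition~\ref{key2} can also be used to invoke Proposition~\ref{Hneg}. This is a non-issue because Proposition~\ref{Hneg} asserts the $H^{-1}$-convergence along \emph{every} sequence tending to infinity, and so in particular along the chosen $\{t_{n_l}\}$. Thus the argument cleanly reduces the desired strong convergence to the already-established weak-type convergence of $M_\alpha u_t$ coupled with the subsequential strong compactness from the attractor.
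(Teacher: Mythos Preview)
Your argument is correct and follows exactly the route the paper intends: the paper's one-line justification (``combining this strong subsequential convergence with the fact in Proposition~\ref{Hneg}\ldots we obtain immediately'') is precisely the combination of Propositions~\ref{Hneg} and~\ref{key2} that you spell out via contradiction. Your treatment is in fact more detailed than the paper's, and your closing observation---that Proposition~\ref{Hneg} applies along \emph{every} sequence $t_n\to\infty$ and hence along the particular subsequence extracted by Proposition~\ref{key2}---addresses the only point where care is needed.
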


Now, given a sequence of moments in time that converges to  infinity,  from \eqref{key2} we have that there exists a subsequence $\{t_n\}$ and a point $\bar u \in H_0^2(\Omega)$ such that
\begin{equation}\label{conv5}
||u(t_n)-\bar{u}||_{2}^2\rightarrow 0, \text{ as } n\rightarrow \infty.
\end{equation}
 Moreover, because
\[
||u(t_n+\tau)-u(t_n)||_1\leq\int_{t_n}^{t_n+\tau}||u_t(s)||_1ds\leq\tau  \max_{s\in [t_n,t_n+\tau]}||u_t(s)||_{1},
\] 
and $u_t \in C(H_0^1(\Omega))$ with $u_t(t) \to 0$ in $H_0^1(\Omega)$,
we conclude that 
\begin{equation}\label{limit1}
\max_{\tau\in [-a,a]}||u(t_n+\tau)-\bar{u}||_{1}\rightarrow 0 \text{ as } n\rightarrow 0
\end{equation}
for every finite $a>0$. By interpolation, for $\delta>0$, we obtain
\begin{equation}\label{limit2}
||u(t_n)-\bar{u}||_{2-\delta}\leq ||u(t_n)-\bar{u}||_{2}^{1-\delta}\hspace{2mm}||u(t_n)-\bar{u}||_{1}^{\delta}.
\end{equation}
Since $\|u(t_n)-\bar{u}\|_{2}$ is bounded, from equations \ref{limit1} and \ref{limit2}, we have
\begin{equation}\label{max}
\max_{\tau\in [-a,a]}||u(t_n+\tau)-\bar{u}||_{2-\delta,\Omega}\rightarrow 0 \text{ as } n\rightarrow \infty
\end{equation}

Using a simple contradiction argument, we can push the Sobolev index to 2.
\begin{proposition}\label{umaxH2}
	Let $u$ be a generalized solution to \eqref{flowplate} and let  $t_n \to \infty$. Then $$\displaystyle \max_{\tau\in [-a,a]}||u(t_n+\tau)-\bar{u}||_{2}\rightarrow 0 \text{ as } n\rightarrow \infty.$$
\end{proposition}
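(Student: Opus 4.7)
The plan is to argue by contradiction, leveraging the attractor compactness of Corollary \ref{th:main2*} against the already established uniform $H^{2-\delta}$ convergence \eqref{max}. Suppose the conclusion fails. Then there exist $\epsilon_0>0$, a subsequence (still denoted $\{t_n\}$), and a sequence $\tau_n \in [-a,a]$ such that
\begin{equation*}
\|u(t_n+\tau_n)-\bar{u}\|_{2} \ge \epsilon_0 \quad \text{for all } n.
\end{equation*}
Set $s_n \equiv t_n+\tau_n$; since $\tau_n$ is bounded and $t_n\to\infty$ we have $s_n\to\infty$.

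Next, I would invoke the attractor result. By Corollary \ref{th:main2*},
\begin{equation*}
d_{Y_{pl}}\big( (u(s_n),u_t(s_n)),\,\mathscr{U}\big) \longrightarrow 0 \quad \text{as } n\to\infty.
\end{equation*}
For each $n$, pick $(v_0^n,v_1^n)\in\mathscr{U}$ (possible by compactness of $\mathscr{U}$) with
$$\|(u(s_n),u_t(s_n))-(v_0^n,v_1^n)\|_{Y_{pl}}\le d_{Y_{pl}}\big((u(s_n),u_t(s_n)),\mathscr{U}\big)+\tfrac{1}{n}.$$
Since $\mathscr{U}$ is compact in $Y_{pl}=H_0^2(\Omega)\times H_0^1(\Omega)$ (the identification via $L_\alpha^2$ topology holds since $\alpha>0$), we extract a further subsequence (still indexed by $n$) such that $(v_0^n,v_1^n)\to(\tilde{u},\tilde{v})$ in $Y_{pl}$ for some $(\tilde{u},\tilde{v})\in\mathscr{U}$. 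Combining, we obtain
\begin{equation*}
u(s_n)\longrightarrow \tilde{u} \quad \text{strongly in } H_0^2(\Omega).
\end{equation*}

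Finally, I would identify $\tilde u = \bar u$ using the already proven weaker convergence \eqref{max}. Indeed, \eqref{max} gives
\begin{equation*}
\|u(s_n)-\bar{u}\|_{2-\delta} \le \max_{\tau\in[-a,a]} \|u(t_n+\tau)-\bar{u}\|_{2-\delta} \longrightarrow 0.
\end{equation*}
Since $H_0^2(\Omega)\hookrightarrow H^{2-\delta}(\Omega)$ continuously, the strong $H_0^2$-limit $\tilde u$ is also the $H^{2-\delta}$-limit, and by uniqueness of limits $\tilde u = \bar u$. But then $\|u(s_n)-\bar u\|_2\to 0$, contradicting $\|u(s_n)-\bar u\|_2\ge\epsilon_0$. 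The contradiction establishes the proposition.

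The only genuinely delicate point is the justification that $\mathscr{U}$ is compact in the $H_0^2(\Omega)\times H_0^1(\Omega)$ topology, which we inherit directly from Corollary \ref{th:main2*} (recall the equivalence of $L_\alpha^2$ and $H_0^1$ norms on $\Omega$ when $\alpha>0$); all other steps are a standard ``upgrade-of-convergence'' argument by contradiction. No new multiplier or energy estimate is required here---the heavy lifting has already been done in establishing the attractor.
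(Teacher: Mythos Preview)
Your proof is correct and follows essentially the same approach as the paper: argue by contradiction (equivalently, the subsubsequence principle), use compactness of the attractor $\mathscr{U}$ from Corollary \ref{th:main2*} to extract an $H_0^2$-convergent subsequence of $u(t_n+\tau_n)$, and then identify the limit with $\bar u$ via the already established uniform $H^{2-\delta}$ convergence \eqref{max}. Your write-up is in fact slightly more careful than the paper's in the selection of the maximizing $\tau_n$ and the near-minimizer argument in $\mathscr{U}$.
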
 
\begin{proof}[Proof of Proposition \ref{umaxH2}]
	 For any fixed $a$, consider a subsequence $$\left\{\displaystyle \max_{\tau\in [-a,a]}||u(t_{n_k}+\tau)||_{2}\right\}_{k=1}^{\infty}.$$ Since $t \mapsto~ \|u(t)\|_{2}$ is continuous and $[-a,a]$ is compact, $$\max_{\tau\in [-a,a]}||u(t_{n_k}+\tau)||_{2}=||u(t_{n_k}+\tau_k)||_{2}$$ for some $\tau_k\in [-a,a]$. From Corollary\eqref{th:main2*}, we know that the sequence $\{u\left(t_{n_k}+\tau_k\right)\}_{k=1}^{\infty}$ has a convergent subsequence $u(t_{n_{k_l}}+\tau_{k_l})\rightarrow \Tilde{u}\in H_{0}^2(\Omega),$ as well as in any lower Sobolev space;	then 
	\begin{equation}\label{maxlim0}
	\Big|\|u(t_{n_k}+\tau_k)-\bar{u}\|_{2-\delta}-\|\bar{u}-\Tilde{u}\|_{2-\delta}\Big|\leq \|u(t_{n_k}+\tau_k)-\Tilde{u}\|_{2-\delta}\rightarrow 0
	\end{equation}
	From \eqref{max} we know that
	\begin{equation}\label{maxlim}
	\|u(t_{n_k}+\tau_k)-\bar{u}\|_{2-\delta}\leq \max_{\tau\in [-a,a]}||u(t_n+\tau)-\bar{u}||_{2-\delta}\rightarrow 0
	\end{equation}
	Substituting \eqref{maxlim} in \eqref{maxlim0}, we see ~
$	\|\bar{u}-\Tilde{u}\|_{2-\delta}=0$ and we identify the limits.

	Hence, any subsequence of $\big\{\displaystyle \max_{\tau\in [-a,a]}||u(t_n+\tau)-\bar{u}||_{2}\big\}$ has a further convergent subsequence that converges to $0$, yielding the result.
\end{proof}
\noindent
\subsection{Step 2: Lifting to Flow Convergences}
Let $\displaystyle\bar{\phi}(x,t)=\frac{1}{2\pi}\int_{x_3}^{t^*} ds\int_{0}^{2\pi}d\theta U\partial_{x_1}\bar{u}^\dagger(x,t,s ,\theta).$

For $t>t_{\rho}$, $\phi=\phi^{**}$, thus we can replace $\phi$ with $\phi^{**}$ in \eqref{phidef}; we then have the following estimates:
\begin{align}\label{phest1}
\begin{split}
\int_{K_{\rho}}dx|\phi(x,t_n)-\bar{\phi}(x,t_n)|&=\frac{1}{2\pi}\int_{K_{\rho}}dx\Bigg|\int_{x_3}^{t^*} ds\int_{0}^{2\pi}d\theta (\partial_t+U\partial_{x_1} )(u^\dagger(x,t_n,s ,\theta)-\bar{u}^\dagger(x,t_n,s ,\theta))\Bigg|\\
&\leq  t^*\Bigg(\max_{\tau>t-t^*} \|u_t(\tau)\|_{1}+U\max_{\tau\in [-t^*,t^*]}\|u(t_n+\tau)-\bar{u}\|_{1}\Bigg)\int_{K_{\rho}}dx
\end{split}
\end{align}
For $j=1,2$:
\begin{align}\label{phest2}
\begin{split}
\int_{K_{\rho}}dx|\partial_{x_j}\phi(x,t_n)-\partial_{x_j}\bar{\phi}&(x,t_n)|=\frac{1}{2\pi}\int_{K_{\rho}}dx\Bigg|\int_{x_3}^{t^*} ds\int_{0}^{2\pi}d\theta \partial_{x_j}\Big((\partial_t+U\partial_{x_1} )(u-\bar{u})\Big)^\dagger(x,t_n,s ,\theta)\Bigg|\\
&\leq  t^*\Bigg(\max_{\tau>t-t^*} \|u_t(\tau)\|_{1}+U\max_{\tau\in [-t^*,t^*]}\|u(t_n+\tau)-\bar{u}\|_{2}\Bigg)\int_{K_{\rho}}dx
\end{split}
\end{align}
and
\begin{align}\label{phest3}
\begin{split}
\int_{K_{\rho}}dx|\partial_{x_3}\phi(x,&t_n)-\partial_{x_3}\bar{\phi}(x,t_n)|\leq \int_{K_{\rho}}dx|(u-\bar{u})(x_1-Ux_3,x_2,t_n-x_3)|\\
&+\frac{1}{2\pi}\int_{K_{\rho}}dx\Bigg|\int_{x_3}^{t^*}\frac{x_3}{\sqrt{s^2-x_3^2}}\int_0^{2\pi}d\theta [M_{\theta}((\partial_t+U\partial_{x_1})(u-\bar{u}))^\dagger](x,t_n,s,\theta)\Bigg|\\
&\leq \Bigg(\|u-\bar{u}\|_{1}+\max_{\tau>t-t^*} \|u_t(\tau)\|_{1}+U\max_{\tau\in [-t^*,t^*]}\|u(t_n+\tau)-\bar{u}\|_{2}\Bigg)\int_{K_{\rho}}dx.
\end{split}
\end{align}
Also, from equation\eqref{phitest}, we obtain
\begin{align}\label{phest4}
\begin{split}
\|\phi_t(x,t)\|_{K_{\rho}} &\leq\max_{\tau>t-t^*} \|u_t(\tau)\|_{1}\left\{2+t^*\right\}\\
\end{split}
\end{align}
By Proposition~\ref{u_t} and \ref{umaxH2}, all terms on the right hand side of estimates \eqref{phest1}--\eqref{phest4} approach zero. Hence, we obtain the convergence
\[
\|\phi(t_n)-\bar{\phi}(t_n)\|_{1,K_{\rho}}+\|\phi_t(t_n)\|_{K_{\rho}}\rightarrow 0,~~ n\to\infty
\]

\subsection{Step 3: Weak Solution}
In this step, we show that $(\bar{u}; \bar{\phi})$, as constructed in the previous Steps, is a weak solution to \eqref{static}.   
We multiply the plate equation in \eqref{flowplate} by smooth function $w \in C_0^{\infty}(\Omega)$ in $L^2(\Omega)$, integrate from $t_n$ to $t_n + a$ for some $a>0,$ and integrate by parts to obtain
\begin{align*}
\begin{split}
&\langle u_{t},w\rangle\Big|_{t_n}^{t_n+a}  +\int_{t_n}^{t_n+a}\langle\Delta u,\Delta w\rangle+\langle\alpha\nabla u_{t},\nabla w\rangle\Big|_{t_n}^{t_n+a} +k\int_{t_n}^{t_n+a}\langle u_t(t),w\rangle dt\\
&+k\alpha\int_{t_n}^{t_n+a}\langle\nabla u_t,\nabla w\rangle dt-\int_{t_n}^{t_n+a}\langle[u,v+F_0],w\rangle+\int_{t_n}^{t_n+a}\langle p_0,w\rangle dt\\
&-\int_{t_n}^{t_n+a}\langle r_{\Omega}\big[\gamma(\phi_t+U\phi_{x_1})\big],w\rangle dt =0
\end{split}
\end{align*}
Each term may be estimated:
\begin{align}\label{set1}
\begin{split}
\Big |\int_{t_n}^{t_n+a}\langle\Delta u-\Delta\bar{u},\Delta w\rangle dt\Big|&\leq a\|\Delta w\|\max_{\tau\in[0,a]}\|u(t_n+\tau)-\bar{u}(\tau)\|_{2}\\
k\int_{t_n}^{t_n+a}\Big |\langle u_t(t),w\rangle\Big | dt&\leq a\|w\| \max_{\tau\in [0,a]}\|u_t(t_n+\tau)\|_{1}\\
\left|\langle u_{t},w\rangle\Big|_{t_n}^{t_n+a}\right|&\leq  2\|w\|\max_{\tau>t-t^*} \|u_t(\tau)\|_{1}\\
\left|\langle\alpha\nabla u_{t},\nabla w\rangle\Big|_{t_n}^{t_n+a}\right|&\leq 2\alpha\|w\|\max_{\tau>t-t^*} \|u_t(\tau)\|_{1}\\
k\alpha\int_{t_n}^{t_n+a}\left|\langle\nabla u_t,\nabla w\rangle\right|dt&\leq k\alpha a\|\nabla w\|\max_{\tau\in [t_n,t_n+a]}\|u_t(\tau)\|_{1}\\
\end{split}
\end{align}
We substitute $x_3=0$ in \eqref{partialder} and \eqref{phitest} to obtain the pointwise expression for $\langle r_{\Omega}\big[\gamma\langle\phi_t+U\phi_{x_1}\rangle\big],w\rangle$, which is then used to obtain the following estimate:
\begin{align}\label{set2}
\begin{split}
\int_{t_n}^{t_n+a}\left|\langle r_{\Omega}\big[\gamma\langle\phi_t+U\phi_{x_1}\rangle\big],w\rangle dt-U \langle \gamma \left[ \partial_{x_1}\bar{\phi} \right], w \rangle\right| \leq &\left(t^*+2+Ut^*\right)\max_{\tau>t-t^*} \|u_t(\tau)\|_{1}\\
&+U^2t^*\max_{\tau\in[-t^*,a]}\|u(t_n+\tau)-\bar{u}\|_2
\end{split}
\end{align}
As we have noted,  $f_v(u)=[u,v(u)+F_0]$ is locally lipschitz for each $F_0\in H^3(\Omega),$ yielding
\begin{align}\label{converge}
\begin{split}
\left|\langle[u,v(u)+F_0]-[\bar{u},v(\bar{u})+F_0],w\rangle\right|& \leq \|\langle[u,v(u)+F_0]-[\bar{u},v(\bar{u})+F_0]\|\cdot\|w\|\\
& \leq C\big(\|u\|,\|w\|_2,\|F_0\|\big)\|u-\bar{u}\|_2
\end{split}
\end{align}
Each term on the right hand side of \eqref{set1}--\eqref{converge} goes to zero as $n\to \infty$ by Propositions \ref{u_t} and \ref{umaxH2}.
Hence, by density, we obtain the following relation for all $w \in H_0^2(\Omega)$:
\begin{equation}\label{statplate}
\langle\Delta \bar{u}, \Delta w\rangle - \left\langle \left[ \bar{u}, v(\bar{u}) + F_0 \right], w \right\rangle + U \left\langle \gamma [ \bar{\phi}], \partial_{x_1} w \right\rangle = \langle p_0, w\rangle.
\end{equation}
Similarly, multiplying the fluid part of equation \ref{flowplate} with $\psi\in C_0^{\infty}(\mathbb{R}^3_+)$ and integrating from $t_n$ to $t_n + a$, we get
\begin{align}\label{f}
\begin{split}
&\int_{t_n}^{t_n+a}(\phi_{tt}, \psi)dt+ \int_{t_n}^{t_n+a}U^2(\partial_{x_{1}}^2\phi,\psi)dt + \int_{t_n}^{t_n+a}(2U\partial_{x_{1}}\phi_t,\psi)dt=\int_{t_n}^{t_n+a}(\Delta\phi,\psi)dt\\
\end{split}
\end{align}
This implies
\begin{align}
\begin{split}
 & (\phi_t,\psi)\Big|_{t_n}^{t_n+a}-\int_{t_n}^{t_n+a}U^2(\partial_{x_1}\phi,\partial_{x_1}\psi)dt- 2U\int_{t_n}^{t_n+a}(\phi_t,\partial_{x_1} \psi)dt+\int_{t_n}^{t_n+a}(\nabla\phi,\nabla\psi)dt\\
&+\int_{t_n}^{t_n+a}\langle(\partial_t+U\partial_{x_1})u,\gamma[\psi]\rangle=0\\
\end{split}
\end{align}
We now estimate each term. Recall that $K_{\rho}\subset\subset \mathbb{R}^3_+$ contain the support of $\psi$. Then:
\begin{align}\label{estphi}
\begin{split}
U^2\int_{t_n}^{t_n+a}\Big |(\partial_{x_1}\phi-\partial_{x_1}\bar{\phi},\partial_{x_1}\psi)\Big | dt&\leq aU^2\|\partial_{x_1}\psi\| \max_{\tau\in [0,a]}\|\phi(t_n+\tau)-\bar{\phi}\|_{1,K_{\rho}}\\
2U\int_{t_n}^{t_n+a}\Big |(\phi_t,\partial_{x_1}\psi)\Big | dt&\leq 2aU\|\partial_{x_1}\psi\| \max_{\tau\in [0,a]}\|\phi_t(t_n+\tau)\|_{1,K_{\rho}}\\
\int_{t_n}^{t_n+a}\Big |(\nabla\phi-\nabla\bar{\phi},\nabla\psi)\Big | dt&\leq a\|\nabla\psi\| \max_{\tau\in [0,a]}\|\phi(t_n+\tau)-\bar{\phi}\|_{1,K_{\rho}}\\
\int_{t_n}^{t_n+a}\left|((\partial_t+U\partial_{x_1})u-U\partial_{x_1} \bar{u},\gamma[\psi])\right|&\leq a\max_{\tau>t_n}\|u_t(\tau)\|_1+a\max_{\tau\in[0,a]}\|u(t_n+\tau)-\bar{u}\|_2
\end{split}
\end{align}
Applying \eqref{phest1}--\eqref{phest4} to \eqref{estphi}, and again straightforwardly invoking Propositions~\ref{u_t} and \ref{umaxH2}, we see that each term on the right hand side of \eqref{estphi} approaches zero. Whence we obtain
\begin{equation}\label{statflow}
(\nabla \bar{\phi}, \nabla \psi) - U^2 (\partial_{x_1} \bar{\phi}, \partial_{x_1} \psi) + U\langle\partial_{x_1} \bar{u}, \gamma [\psi]\rangle = 0
\end{equation}
for any $\psi \in H^1(\mathbb R^3_+)$. Thus $(\bar{u}; \bar{\phi})$ satisfies \eqref{statplate} and \eqref{statflow} and is hence a weak solution.

\subsection{Step 4: Final Result}
We therefore have shown that any sequence $(u(t_n); \phi_t(t_n))$ with $t_n \to \infty$ contains a subsequence which converges to some stationary solution. 
We conclude by improving this convergence to the set $\mathcal N$.
\begin{proposition}\label{dumb} For $(u,\phi)$ a generalized solution to \eqref{flowplate} where $\phi_0,\phi_1$ have localized support in $K_{\rho}$, we have:
	\begin{equation}\label{refthisone}\lim_{t\rightarrow\infty}\inf_{\{\bar{u};\bar{\phi}\}\in\mathcal{N}}\Big\{||u(t)-\bar{u}(t)||_{2}^2+ ||u_t(t)||_{2}^2+||\phi(t)-\bar{\phi}(t)||_{H^1( K_{\tilde \rho} )}^2+||\phi_t(t)||_{L^2( K_{\tilde\rho} )}^2\Big\}=0,\end{equation}~~for any $\tilde \rho>0$.
\end{proposition}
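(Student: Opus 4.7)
The plan is a contradiction argument that bundles together the convergences already established in Steps 1--3 of this subsection. Suppose, toward a contradiction, that the displayed limit fails: then there exist $\tilde\rho>0$, $\epsilon>0$, and a sequence $t_n\to\infty$ along which
\[
F(t_n) := \inf_{(\bar u,\bar\phi)\in\mathcal N}\Big\{\|u(t_n)-\bar u\|_{2}^2+\|u_t(t_n)\|_{1}^2+\|\phi(t_n)-\bar\phi\|_{H^1(K_{\tilde\rho})}^2+\|\phi_t(t_n)\|_{L^2(K_{\tilde\rho})}^2\Big\}\ge \epsilon.
\]
It then suffices to exhibit a subsequence $\{t_{n_k}\}$ and a single pair $(\bar u,\bar\phi)\in\mathcal N$ along which each summand in $F$ vanishes, contradicting the lower bound.

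First, Proposition \ref{u_t} annihilates the velocity term: $\|u_t(t_n)\|_1\to 0$ without any subsequence extraction. Next, Proposition \ref{key2} delivers a subsequence $\{t_{n_k}\}$ and a limit point $\bar u\in H_0^2(\Omega)$ with $\|u(t_{n_k})-\bar u\|_2\to 0$, and Proposition \ref{umaxH2} upgrades this to the uniform-in-window convergence $\max_{\tau\in[-a,a]}\|u(t_{n_k}+\tau)-\bar u\|_2\to 0$ for every $a>0$. This last fact is precisely the input required by the pointwise flow estimates on the short time windows $[t_{n_k}-t^*,t_{n_k}+t^*]$.

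Now I would define the candidate stationary flow profile $\bar\phi(\xb)=\tfrac{1}{2\pi}\int_{x_3}^{t^*}\int_0^{2\pi} U\partial_{x_1}\bar u^{\dag}(\xb,s,\theta)\,d\theta\,ds$, obtained by freezing $u$ at its limit $\bar u$ in the Kirchhoff representation \eqref{phidef}. Since the flow initial data are compactly supported in $K_\rho$, Huygens' principle as recorded in \eqref{starstable} guarantees that for each fixed $\tilde\rho>0$ there is a time $t_{\tilde\rho}$ beyond which $\phi\equiv\phi^{**}$ on $K_{\tilde\rho}$; hence the pointwise bounds \eqref{phest1}--\eqref{phest4}, rewritten with $K_{\tilde\rho}$ in place of $K_\rho$, become available for all large $k$. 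Feeding in the plate convergences from the previous paragraph yields $\|\phi(t_{n_k})-\bar\phi\|_{H^1(K_{\tilde\rho})}+\|\phi_t(t_{n_k})\|_{L^2(K_{\tilde\rho})}\to 0$. Finally, Step 3 of this subsection already verifies that any such limit pair $(\bar u,\bar\phi)$ satisfies the weak stationary system \eqref{statplate}--\eqref{statflow}, so $(\bar u,\bar\phi)\in\mathcal N$. Evaluating $F(t_{n_k})$ at this admissible stationary point produces $F(t_{n_k})\to 0$, the desired contradiction.

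The step I expect to demand the most care is the third: one must ensure that the limit pair $(\bar u,\bar\phi)$, assembled by combining subsequential plate compactness with the explicit flow representation, is in fact a bona fide stationary solution of the coupled system rather than a mere accumulation point, and that the flow localization estimates extend to arbitrary $K_{\tilde\rho}$ and not only to the ball $K_\rho$ fixed by the initial data. The former is handled by the weak-formulation argument of Step 3 (taking the same $w\in C_0^\infty(\Omega)$ and $\psi\in C_0^\infty(\realsthree_+)$ test functions as there), while the latter follows from Huygens' principle together with the validity of the pointwise representation of $\phi^{**}$ on any bounded subset of $\realsthree_+$. Once these two points are in place, the rest is routine bookkeeping with the convergences already in hand.
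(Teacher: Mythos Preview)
Your proposal is correct and follows essentially the same contradiction argument as the paper: assume the limit fails along some sequence $t_n\to\infty$, then invoke the subsequential convergence to a stationary point $(\bar u,\bar\phi)\in\mathcal N$ established in Steps~1--3 to contradict the assumed lower bound. The paper's proof is terser---it simply cites that ``we have shown'' the subsequential convergence---whereas you spell out the roles of Propositions~\ref{u_t}, \ref{key2}, \ref{umaxH2}, the Kirchhoff representation, and the weak-form verification of Step~3; but the logical skeleton is identical.
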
 

\begin{proof}[Proof of Proposition \ref{dumb}]
	\noindent
Assume the statement is not true. Then there is a sequence $t_n \to \infty$ and some $\epsilon>0$ so that  for all $n$ sufficiently large
\[
\inf_{\{\bar{u};\bar{\phi}\}\in\mathcal{N}}\Big\{||u(t_n)-\bar{u}(t_n)||_{2,\Omega}^2+ ||u_t(t_n)||_{2,\Omega}^2+||\phi(t_n)-\bar{\phi}(t_n)||_{H^1( K_{\rho} )}^2+||\phi_t(t_n)||_{L^2( K_{\rho} )}^2\Big\}>\epsilon.
\]
But for any such sequence $\{t_n\},$ we have shown that there exists a subsequence $\{t_{n_k}\}$ such that
\[
\lim_{k\rightarrow\infty}\inf_{\{\bar{u};\bar{\phi}\}\in\mathcal{N}}\Big\{||u(t_{n_k})-\bar{u}(t_{n_k})||_{2,\Omega}^2+ ||u_t(t_{n_k})||_{2,\Omega}^2+||\phi(t_{n_k})-\bar{\phi}(t_{n_k})||_{H^1( K_{\rho} )}^2+||\phi_t(t_{n_k})||_{L^2( K_{\rho} )}^2\Big\}\to 0,
\]
which is a contradiction. Hence 
\[
\lim_{t\rightarrow\infty}\inf_{\{\bar{u};\bar{\phi}\}\in\mathcal{N}}\Big\{||u(t)-\bar{u}(t)||_{2}^2+ ||u_t(t)||_{2}^2+||\phi(t)-\bar{\phi}(t)||_{H^1( K_{\rho} )}^2+||\phi_t(t)||_{L^2( K_{\rho} )}^2\Big\}=0
\]
\end{proof}
With the above claim, we conclude the proof of Theorem \ref{unibound}. In the case that $\mathscr N$ is isolated (e.g., finite), \eqref{refthisone} collapses to the result of Corollary \ref{improve}.

\section{Declarations}

\subsection{Funding} 
The second author would like to thank the National Science
Foundation, and acknowledge his partial funding from NSF
DMS-1907620 (Justin T. Webster).

\subsection{Conflicts of Interest/Competing Interests}
Not applicable.

\subsection{Availability of data and material} 
Not applicable.

\subsection{Code availability}
Not applicable.


\begin{thebibliography}{99}
	

\bibitem{pazy} 
\newblock A. Pazy.: 
\newblock {{Semigroups of linear operators and applications to partial differential  equations.}} 
\newblock (Vol. 44). Springer Science \& Business Media(2012).


\bibitem{Babin-Vishik} 
\newblock A. Babin and M. Vishik.:
\newblock {Attractors of Evolution Equations.}
\newblock North-Holland(1992).

\bibitem{bal0} 
\newblock A. V. Balakrishnan.:
\newblock {{Aeroelasticity--Continuum Theory}.}
\newblock Springer-Verlag(2012).

\bibitem{bolotin}
\newblock V. V. Bolotin.:
\newblock {Nonconservative Problems of Elastic Stability.}
\newblock Pergamon Press, Oxford(1963).

\bibitem{b-c} 
\newblock A. Boutet de Monvel and I. Chueshov.:
\newblock {The problem of interaction of von Karman plate with subsonic flow gas.}
\newblock {Math. Methods in Appl. Sc.}, {22}, 801--810(1999).

\bibitem{LBC96}
\newblock L. Boutet de Monvel and I. Chueshov.:
\newblock Non-linear oscillations of a plate in a flow of gas, C.R.
\newblock {Acad. Sci. Paris, Ser.I}, {322}, 1001--1006(1996).

\bibitem{b-c-1}
\newblock L. Boutet de Monvel and I. Chueshov.:
\newblock {Oscillation of von Karman's plate in a potential flow of gas}
\newblock {Izvestiya RAN: Ser. Mat.}, {63}, 219--244(1999).

\bibitem{oldchueshov1} 
\newblock L. Boutet de Monvel, I. Chueshov and A. Rezounenko.:
\newblock {Long-time behaviour of strong solutions of retarded nonlinear PDEs.}
\newblock {Comm. PDEs}, {22}, 1453--1474(1997).

\bibitem{quasi}
\newblock I. Chueshov.:
\newblock  { Dynamics of quasi-stable dissipative systems.}
\newblock  Berlin: Springer(2015).

\bibitem{Chu92b} 
\newblock I. Chueshov.:
\newblock {On a certain system of equations with delay, occurring in aeroelasticity.}
\newblock \emph{J. Soviet Math.}, {58}, 385--390(1992).

\bibitem{chuey}
\newblock I. Chueshov.:
\newblock Dynamics of von Karman plate in a potential flow of gas: rigorous results and unsolved problems.
\newblock {Proceedings of the 16th IMACS World Congress}, Lausanne, Switzerland, 1--6(2000).

\bibitem{survey1}
I. Chueshov, E. Dowell, I. Lasiecka, and J.T. Webster:
\newblock Nonlinear elastic plate in a flow of gas: Recent results and conjectures. Applied Mathematics \& Optimization, 73(3), 475--500 (2016).


\bibitem{survey2}
I. Chueshov, E. Dowell, I. Lasiecka, and J.T. Webster:
\newblock Mathematical aeroelasticity: A survey. J. Mathematics in Engineering, Science \& Aerospace, 7(1) (2016).


\bibitem{springer} 
\newblock I. Chueshov and I. Lasiecka.:
\newblock {{Von Karman Evolution Equations, Well-posedness and Long-Time Behavior.}}
\newblock Monographs, Springer-Verlag(2010).


\bibitem{delay}
\newblock I. Chueshov, I. Lasiecka and J. T. Webster.:
\newblock Attractors for delayed, non-rotational von Karman plates with applications to flow-structure interactions without any damping.
\newblock { Comm. in PDE}, 39(11), 1965--1997(2014).

\bibitem{supersonic}
\newblock I. Chueshov, I. Lasiecka and J. T. Webster.:
\newblock {Evolution semigroups for supersonic flow-plate interactions.}
\newblock {J. of Diff. Eqs.}, {254}, 1741--1773(2013).



\bibitem{ciarlet} 
\newblock P. Ciarlet and P. Rabier.:
\newblock {Les Equations de Von Karman.}
\newblock Springer(1980).

\bibitem{dowellnon}
\newblock E. Dowell.:
\newblock Nonlinear Oscillations of a Fluttering Plate, I and II.
\newblock {AIAA J.}, {4} (1966), 1267--1275; and {5}, 1857--1862(1967).

\bibitem{B}
\newblock E. Dowell.:
\newblock Panel flutter-A review of the aeroelastic stability of plates and shells.
\newblock {AIAA J.}, {8}, 385--399(1970).


\bibitem{dowellrecent}
\newblock E. H. Dowell.:
\newblock Some recent advances in nonlinear aeroelasticity: fluid-structure interaction in the 21st century.
\newblock Proceedings of the 51st AIAA/ASME/ASCE/AHS/ASC Structures, \emph{AIAA}, 3137 (2010).


\bibitem{dowellchaos} E. H. Dowell.: Flutter of a buckled plate as an example of chaotic motion of a deterministic autonomous system. J. Sound and Vibration, 85(3), 333-344(1982).

\bibitem{HHWW} J. Howell, K. Huneycutt, J.T. Webster, S. Wilder.: (In)stability of thin beams in a potential flow. Math. in Engin., 1(3), 614--647(2019).

\bibitem{HM} P. Holmes, P. and J. Marsden.: Bifurcation to divergence and flutter in flow-induced oscillations: an infinite dimensional analysis. Automatica, 14(4), 367--384 (1978). 

\bibitem{book}
\newblock I., Kukavica, I., Lasiecka, I., R. Triggiani, A. Tuffaha, and J.T. Webster: Mathematical Theory of Evolutionary Fluid-Flow Structure Interactions. Springer International Publishing(2018)


\bibitem{lagnese} 
\newblock J. Lagnese.:
\newblock {{Boundary Stabilization of Thin Plates}}.
\newblock SIAM(1989).

\bibitem{redbook}
\newblock Lasiecka, I. and Triggiani, R.:Control theory for partial differential equations: Volume 1, Abstract parabolic systems: Continuous and approximation theories. Cambridge University Press(2000).

\bibitem{websterlasiecka}
\newblock I. Lasiecka and J. T. Webster.:
\newblock Generation of bounded semigroups in nonlinear flow-structure interactions with boundary damping.
\newblock {Math. Methods in App. Sc.}, DOI: 10.1002/mma.1518, published online December(2011).

\bibitem{stab1}
\newblock I. Lasiecka and J. T. Webster.:
\newblock Eliminating flutter for clamped von Karman plates immersed in subsonic flows. Communications on Pure and Applied Analysis, 13(5) (2014).

\bibitem{stab2}
\newblock I. Lasiecka and J. T. Webster.:
\newblock Feedback stabilization of a fluttering panel in an inviscid subsonic potential flow.  SIAM Journal on Mathematical Analysis, 48(3), 1848--1891(2016).

\bibitem{miyatake1973} 
\newblock S. Miyatake.:
\newblock Mixed problem for hyperbolic equation of second order.
\newblock {J. Math. Kyoto Univ.}, {13}, 435--487(1973).

\bibitem{ryz} 
\newblock I. Ryzhkova.:
\newblock {Stabilization of a von Karman plate in the presence of thermal effects in a subsonic potential flow of gas}.
\newblock {J. Math. Anal. and Appl.}, {294}, 462--481(2004).

\bibitem{ryz2} 
\newblock I. Ryzhkova.:
\newblock {Dynamics of a thermoelastic von Karman plate in a subsonic gas flow,}
\newblock {Zeitschrift Ang. Math. Phys.}, {58}, 246--261(2007).


\bibitem{tataru} 
\newblock D. Tataru.:
\newblock On the regularity of boundary traces for the wave equation.
\newblock {Ann. Scuola Normale. Sup. di Pisa.}, {26}, 185--206(1998).

\bibitem{temam} 
\newblock R. Temam.:
\newblock  {{Infinite Dimensional Dynamical Systems in Mechanics and Physics}}.
\newblock Springer-Verlag(1988).

\bibitem{webster}
\newblock J. T. Webster.:
\newblock  {Weak and strong solutions of a nonlinear subsonic flow-structure interaction: semigroup approach}.
\newblock {Nonlinear Analysis}, {74} 3123--3136(2011).


\end{thebibliography}
\end{document}